\newtheorem{theorem}{Theorem}[section]
\newtheorem{lemma}[theorem]{Lemma}
\newtheorem{corollary}[theorem]{Corollary}
\theoremstyle{definition}
\newtheorem{remark}[theorem]{Remark}
\date{}
\def\bar{\overline}
\begin{document}
\title{\textbf{Signless Laplacian characterization of cones over disjoint unions of cycles,  edges and isolated vertices}\footnote{ Emails: yejiachang12@163.com (J. Ye), jgqian@xmu.edu.cn (J. Qian),  zstanic@matf.bg.ac.rs (Z. Stani\'c, the corresponding author).}}
\author{\small Jiachang Ye$^{1}$, Jianguo Qian$^{1, 2}$,
 Zoran Stani\' c$^{3}$
  \\\small  $^1$ School of Mathematical Sciences, Xiamen University,  Xiamen, 361005, China
\\\small $^{2}$ School of Mathematics and Statistics, Qinghai Minzu University,  Xining, 810007, China
\\\small $^{3}$ Faculty of Mathematics, University of Belgrade,
Studentski trg 16, 11 000 Belgrade, Serbia} \maketitle

\begin{abstract}
Two graphs are said to be $Q$-cospectral if they share the same signless Laplacian spectrum.
		A simple graph is said to be determined by its signless Laplacian spectrum (abbreviated as DQS)
		if there exists no other non-isomorphic simple graph with the same signless Laplacian spectrum.
		In this paper, we establish the following results:
		 \begin{itemize}
		\item[(1)] Let
		$
		G \cong K_{1} \vee \bigl(C_{k} \cup qK_{2} \cup sK_{1}\bigr),
		$
		with $q,s \geq 1$, $k \geq 4$, and at least $21$ vertices.
		If $k$ is odd, then $G$ is DQS. Moreover, if $k$ is even and $F$ is $Q$-cospectral with $G$, then
		$$
		F \cong G
		\quad \text{or} \quad
		F \cong K_{1} \vee \bigl(C_{4} \cup P_{k-3} \cup P_{3} \cup (q-2)K_{2} \cup sK_{1}\bigr).
		$$
		\item[(2)] Let $G\cong K_1\vee (C_{k_1}\cup C_{k_2}\cup\cdots \cup C_{k_t}\cup qK_2\cup sK_1)$ with $t\ge 2$, $q,s\ge 1$,  $k_i\ge 4$ and at least $33$ vertices. If each $k_i$ is odd, then $G$ is DQS.
		\item[(3)] The graph
		$
		K_{1} \vee \bigl(C_{3} \cup C_{k_{1}} \cup C_{k_{2}} \cup \cdots \cup C_{k_{t-1}} \cup qK_{2} \cup sK_{1}\bigr),
		$
		with $t,q,s \geq 1$ and $k_{i} \geq 3$, is not DQS. Moreover, it is $Q$-cospectral with
		$
		K_{1} \vee \bigl(K_{1,3} \cup C_{k_{1}} \cup C_{k_{2}} \cup \cdots \cup C_{k_{t-1}} \cup qK_{2} \cup (s-1)K_{1}\bigr).
		$
	\end{itemize}
Here $P_{n}$, $C_{n}$, $K_{n}$ and $K_{n-r,r}$ denote the path, the cycle, the complete graph and the complete bipartite graph on $n$ vertices, while $\cup$ and $\vee$ represent the disjoint union and the join of two graphs, respectively. Furthermore, the signless Laplacian spectrum of the graphs under consideration is computed explicitly.

\begin{flushleft}
\textbf{Keywords:} $Q$-spectrum, Spectral determination, Cone, Cycle, Path, Spectral moment\\
\textbf{MSC 2020:}  05C50.\\
\end{flushleft}
\end{abstract}

\section{Introduction}
Let $G = (V,E)$ be a finite simple undirected graph.
The number of vertices of $G$ is called the \emph{order} of~$G$, denoted by $n(G)$ (or simply $n$).
The number of edges of $G$ is called the \emph{size} of $G$, denoted by $m(G)$ (or $m$). The \emph{join} of graphs $G$ and $F$, denoted by $G \vee F$, is the graph obtained from the disjoint union of $G$ and $F$ by adding an edge between every vertex of $G$ and every vertex of $F$. If $G$ is the single-vertex graph $K_{1}$, the resulting join is called the \emph{cone} over $F$.

Let $D(G)$ denote the diagonal matrix of vertex degrees of a graph $G$, and let $A(G)$ be its adjacency matrix.
The \emph{signless Laplacian matrix} of $G$ is defined by
$
Q(G) = D(G) + A(G)
$. The eigenvalues of $Q(G)$ are called the \emph{signless Laplacian eigenvalues}, and together they form the \emph{signless Laplacian spectrum} (or simply, the \emph{$Q$-spectrum}) of $G$. When the context is clear, the prefix $Q$- will be omitted.

Two graphs are said to be \emph{$Q$-cospectral} if they have the same $Q$-spectrum.
A graph $G$ is said to be \emph{determined by its signless Laplacian spectrum} (abbreviated as \emph{DQS}) if every graph $F$ that is $Q$-cospectral with $G$ is necessarily isomorphic to $G$.

The question of whether a graph is determined by the spectrum of a prescribed matrix is one of the most intriguing and extensively studied problems in spectral graph theory. Based on a combination of theoretical observations, exhaustive searches of graphs of relatively small order, and comparisons of the spectra of the adjacency, Laplacian and signless Laplacian matrices, the authors of \cite{towI,towII} concluded that cospectrality occurs least frequently with respect to the signless Laplacian spectrum. According to these references, this observation suggests that spectral graph theory based on the signless Laplacian may be more effective than the other two approaches, which have historically received much greater attention in the literature.

We believe our readers will agree that determining whether a fixed graph is uniquely identified by its spectrum is a challenging problem, even in the case of graphs with relatively simple structures. For some basic results and further developments, we refer to~\cite{3,4}, while comprehensive treatments of DQS graphs are available in~\cite{towI,towII}. Moreover, specific classes of DQS graphs have been established in~\cite{CSS,Liu-ELA,Liu2} and the references therein. For a broader perspective, including a survey and thorough discussion on spectral determination problems, see~\cite{18,towI,towII}.

In this paper, we build upon the research initiated in \cite{Ye2025DAM} and further developed in \cite{Ye2025Signless} by studying in greater depth the $Q$-cospectrality of cones $K_1 \vee F$, where $F$ is taken to be a disjoint union of cycles, edges and isolated vertices. This class of graphs provides a  testing ground for understanding how structural properties of the base graph $F$ influence the spectral behaviour of the cone. An intriguing phenomenon emerges: Even relatively minor modifications in the structure of $F$, such as the absence of a particular component type or the exclusion of cycles of specific lengths, can lead to substantially different outcomes. Depending on these changes, the resulting graph $K_1 \vee F$ may either be uniquely determined by its $Q$-spectrum (DQS) or, in contrast, create families of non-isomorphic graphs that are nevertheless $Q$-cospectral. These results illustrate the delicate balance between combinatorial structure and spectral invariants, and highlight how the interplay between components of $F$ governs the spectral uniqueness of the cone.

The cases in which $F$ does not contain edges or isolated vertices have already been resolved in \cite{Ye2025DAM} and~\cite{Ye2025Signless}, respectively. It is worth mentioning that preliminary results related to the former case had appeared earlier in  \cite{Liu-wheel,Ye2024LAA,Liu21}. In \cite{Liu-wheel,Liu21}, attention was restricted to the case where $F$ contains only cycle components, thereby establishing a first step toward the general problem. On the other hand, in \cite{Ye2024LAA}, the authors focused on the situation where $F$ consists of a single cycle together with an arbitrary number of isolated vertices, thus providing complementary insights.

Concerning the background, we note that a related line of research on spectral determination was carried out in~\cite{CSS}, where the authors investigated disjoint unions of paths and cycles. In addition to the already mentioned references, the present paper also connects to a number of works on the spectral determination of graph products, including \cite{MH3,Liu11,Liu1,sage,Wang11,Wheel-DLS,Zhou11}.  In particular, Theorem~\ref{12t} below extends this body of work by generalizing previous results on the $Q$-spectral determination of friendship graphs and star graphs established in~\cite{Liu-ELA}.

To formulate our results, we introduce some basic notation. We denote by $P_n$, $K_n$, $K_{n-r,r}$, and $C_n$ the path, the complete graph, the complete bipartite graph, and the cycle of order $n$, respectively. The \emph{disjoint union} of two graphs $G$ and $F$ is denoted by $G \cup F$, and the disjoint union of $r$ copies of a graph $G$ is written as $rG$. Our main contributions read as follows.

 \begin{theorem}\label{11t} Every graph $G\cong K_1\vee (C_k\cup qK_2\cup sK_1)$, with $q,s\ge 1$, $k\ge 4$ and at least $21$ vertices, is DQS if $k$ is odd. Moreover, if $k$ is even and  $F$ is $Q$-cospectral with $G$, then $$F\cong G \quad \text{or} \quad F\cong K_1\vee (C_{4}\cup P_{k-3}\cup P_{3}\cup (q-2)K_2 \cup sK_1).$$
\end{theorem}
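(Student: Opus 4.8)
The plan is to take an arbitrary graph $F$ that is $Q$-cospectral with $G$ and reconstruct it from spectral invariants until it is forced into the prescribed cone shape. First I would record what the spectrum reveals through the moments $\mathrm{tr}\,Q^{i}$: the order $n$, the size $m$, the quantity $\sum_{v}d_{v}^{2}$, the combination $\sum_{v}d_{v}^{3}+6t$ (with $t$ the number of triangles), and the fourth-order combination involving $\sum_{v}d_{v}^{4}$, $\sum_{uv\in E}d_{u}d_{v}$, $\sum_{v}d_{v}t_{v}$ and the number of quadrilaterals. I would also use that the multiplicity of the eigenvalue $0$ equals the number of bipartite components; since $G$ is connected and non-bipartite, $0\notin\mathrm{spec}(G)$, hence $F$ has no bipartite component. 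In particular $n(F)=n(G)$, $m(F)=m(G)$, and the matching low-order moment data are what drive the rest of the argument.

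Second, I would force $F$ to be a cone $K_{1}\vee F''$ over a graph of maximum degree $2$. Combining the bound $q_{1}\ge\Delta+1$ for the largest eigenvalue with the fact that $\sum_{v}d_{v}^{2}=(n-1)^{2}+O(n)$ whereas $\sum_{v}d_{v}=O(n)$ strongly constrains the top of the degree sequence; the hypothesis of at least $21$ vertices then makes these estimates decisive and, via the structural lemmas of \cite{Ye2025DAM,Ye2025Signless}, forces a single vertex $v_{0}$ of degree $n-1$. Deleting $v_{0}$, the identity $Q(F)[V\setminus v_{0}]=Q(F'')+I$ ties the spectrum of $F$ to that of $F''$, and with $n_{j}$ the number of base vertices of degree $j$ the recorded moments become $\sum_{j}n_{j}=k+2q+s$, $\sum_{j}jn_{j}=2(k+q)$ and $\sum_{j}j(j-1)n_{j}=2k$, while the third-order datum reads $\sum_{j\ge3}j(j-1)(j-2)n_{j}+6\,t(F'')=0$. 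As both summands of the last relation are non-negative, they vanish simultaneously: $F''$ has maximum degree $2$ and is triangle-free. The linear system then has the unique solution $(n_{0},n_{1},n_{2})=(s,2q,k)$, so $F''$ is a disjoint union of $s$ isolated vertices, $q$ paths, and cycles of length at least $4$.

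Third, it remains to determine the exact cycle--path profile. The engine is the cone identity
\[
\phi_{Q}(K_{1}\vee F'',x)=\bigl(x-(k+2q+s)\bigr)\,\phi_{Q}(F'',x-1)-\psi_{F''}(x-1),
\]
where $\psi_{F''}(y)=\mathbf{1}^{\top}\mathrm{adj}\bigl(yI-Q(F'')\bigr)\mathbf{1}$; the associated coronal $\Gamma_{F''}=\psi_{F''}/\phi_{Q}(F'')$ is additive over disjoint unions, and the $Q$-spectra and coronals of $P_{n}$ and $C_{n}$ are explicit. Matching this polynomial with that of $G$ reduces $Q$-cospectrality of the two cones to a single identity relating the base characteristic polynomials and coronals, into which the known path and cycle data may be fed in the spirit of the disjoint-union results of \cite{CSS}; this constrains how the $k$ degree-two vertices can be split among the cycles and the interiors of the $q$ paths.

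The main obstacle is to rule out every alternative profile, and this is exactly where the parity of $k$ becomes decisive. The sensitive point is bipartiteness: an odd cycle contributes no zero eigenvalue, whereas every replacement assembled from even cycles and paths is bipartite, and the number and the balanced/unbalanced type of the zero eigenvalues of $F''$ are dictated by the cone spectrum through the identity above. Hence when $k$ is odd the cycle $C_{k}$ admits no bipartite substitute, and together with the determination of cycle lengths from the explicit path/cycle spectra this pins down $F''\cong C_{k}\cup qK_{2}\cup sK_{1}$ and yields the DQS conclusion. When $k$ is even the balanced cycle $C_{k}$ admits a single compensating exchange, in which two copies of $K_{2}$ are traded so that $C_{k}\cup 2K_{2}$ is replaced by $C_{4}\cup P_{k-3}\cup P_{3}$ while the degree sequence, the triangle count and the fourth moment are preserved; proving that this is the only surviving candidate, and then settling by explicit computation of the spectrum whether it is genuinely $Q$-cospectral, is the delicate endgame.
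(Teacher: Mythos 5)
Your first two steps are sound, and in fact your route to the degree sequence is cleaner than the paper's. The reduction to a dominating vertex is legitimate: it is exactly what the paper's Lemma~\ref{d1-lem} (from \cite{Ye2024LAA}) provides, since $d_1(G)=n-1$ forces $d_1(F)=n-1$. Once $F\cong K_1\vee F''$ is known, your identity $\sum_{j}j(j-1)(j-2)n_j+6\varsigma_{F''}(C_3)=0$, with both summands non-negative, kills all base degrees $\geq 3$ and all triangles in one stroke; the paper instead needs a separate bound $n_4(F)\le 1$ (Lemma~\ref{n4-lem}) followed by a triangle-counting case analysis (Lemma~\ref{l5.1}) to reach the same conclusion that $F-v_1$ is a triangle-free disjoint union of paths, cycles of length at least $4$, and $s$ isolated vertices.

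The genuine gap is in your third and fourth steps, which is where the theorem actually lives. You must rule out, concretely: (a) bases containing two or more cycles; (b) bases containing no cycle; and (c) one-cycle bases other than $C_k\cup qK_2\cup sK_1$ and $C_4\cup P_{k-3}\cup P_3\cup (q-2)K_2\cup sK_1$. The coronal identity you invoke is a single polynomial equation involving two quantities that the cone spectrum does not determine separately, namely $\phi_Q(F'',\cdot)$ and $\psi_{F''}$, so ``matching'' it against $G$ does not by itself separate these candidates; no discriminating computation is carried out. The paper needs three distinct devices here: for (a), the fact that $\chi_2(G)<5$ when $t=1$ together with the forbidden subgraph $K_1\vee(C_{r_1}\cup C_{r_2})$ (Lemma~\ref{leq5-lem-2}); for (b), the strict fourth-moment inequality $T_4(F^*)<T_4(G)$ (Lemma~\ref{T4F*-lem}); for (c), the fourth-moment identity of Lemma~\ref{T4F**-lem} (which forces the cycle to be $C_4$ and exactly two paths of length at least $3$) followed by a spectral-radius comparison through multigraphs with digons (Lemmas~\ref{PathCycle-lem} and~\ref{GG*-lem}) to force one of those paths to be $P_3$. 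Your parity endgame also rests on an unproven claim: the cone spectrum does not ``dictate'' the number and type of zero eigenvalues of $Q(F'')$ through the coronal identity. What is actually available is a one-sided bound via interlacing after deleting the apex (Lemma~\ref{DeleteVertex-lem}): each bipartite component of the base forces a cone eigenvalue in $(0,1]$, giving $m_F((0,1])\ge q+s+1$ for the exceptional candidate, against the exact count $m_G((0,1])=q+s$ for odd $k$ read off the explicit spectrum (Lemma~\ref{SQG-lem}, Remark~\ref{r3.2}). Note that the multiplicity of the eigenvalue $1$ alone would not suffice, since unbalanced bipartite components such as $K_1$ or odd paths do not lift the base zero to a $1$ of the cone. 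As you yourself acknowledge, this endgame is left as ``delicate''; as it stands, the proposal establishes the degree sequence of $F-v_1$ but not the theorem.
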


 \begin{theorem}\label{12t} The graph $K_1\vee (C_{k_1}\cup C_{k_2}\cup\cdots \cup C_{k_t}\cup qK_2\cup sK_1)$, with $t\ge 2$, $q,s\ge 1$,  $k_i\ge 4$ $(1\le i \le t)$ and at least $33$ vertices, is DQS whenever every $k_i$ is odd.
\end{theorem}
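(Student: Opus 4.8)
The goal is to show that every graph $F$ that is $Q$-cospectral with $G\cong K_1\vee (C_{k_1}\cup\cdots\cup C_{k_t}\cup qK_2\cup sK_1)$ is isomorphic to $G$. The plan is to recover from the spectrum enough combinatorial data to force $F$ to be a cone over a disjoint union of odd cycles, edges and isolated vertices, and then to match all parameters. First I would record the low-order signless Laplacian spectral moments $T_\ell=\sum_i\lambda_i^\ell=\operatorname{tr}(Q^\ell)$, which $F$ and $G$ share. From $T_0,T_1,T_2$ one reads off $n$, the size $m=\tfrac12\sum_i d_i$, and $\sum_i d_i^2$; from $T_3=\sum_i d_i^3+3\sum_i d_i^2+6N_\triangle$ and the higher moments one extracts, after subtracting the known degree contributions, the number of triangles, quadrangles and short paths. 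Evaluating these for $G$ is transparent: since every $k_i\ge 4$ the base is triangle-free, so every triangle of $G$ uses the cone vertex and $N_\triangle(G)=\sum_i k_i+q$, and the degree sequence of $G$ is $\big(n-1,\,3^{(\sum_i k_i)},\,2^{2q},\,1^{s}\big)$. These are the target values $F$ must reproduce.

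The first substantial step is to prove that $F$ possesses a universal vertex, so that $F\cong K_1\vee F'$ for some $F'$ on $n-1$ vertices with maximum degree at most $2$. Because the degree sequence of $G$ has a single entry $n-1$ and all remaining entries at most $3$, the recovered power sums satisfy $\sum_i d_i^2=(n-1)^2+O(n)$ and $\sum_i d_i^3=(n-1)^3+O(n)$, while the graph is sparse, $m=O(n)$. I would use these together: the near-maximal value of $\sum_i d_i^3$, combined with $\sum_i d_i^2$ and the sparsity, rules out splitting the large degree among two or more vertices and forces a unique vertex of degree exactly $n-1$; the residual power sums then confine every other degree to $\{1,2,3\}$. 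This is the step I expect to be the main obstacle, since the degree power sums do not by themselves determine the degree sequence, and one must combine $\sum_i d_i^2$, $\sum_i d_i^3$ (and, for the bound on the remaining degrees, the quadrangle and $P_3$ counts) with strict inequalities; the hypothesis $n\ge 33$ is precisely what makes these inequalities decisive and is the analogue of the bound $21$ appearing in Theorem~\ref{11t}.

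With $F\cong K_1\vee F'$ established, $F'$ is a disjoint union of paths and cycles, and here the oddness hypothesis becomes decisive. The multiplicity of $0$ as a $Q$-eigenvalue of any graph equals its number of bipartite components; through the cone relation $Q(F)=\left(\begin{smallmatrix} n-1 & \mathbf{1}^{\top}\\ \mathbf{1} & Q(F')+I\end{smallmatrix}\right)$, the multiplicity of the eigenvalue $1$ of $Q(F)$ is governed by the kernel of $Q(F')$ restricted to $\mathbf{1}^{\perp}$. Matching this multiplicity with that of $G$ constrains the number of bipartite components of $F'$, which is forced to equal $q+s$: for $G$ these arise solely from the $q$ edges and $s$ isolated vertices, since each $C_{k_i}$ is an odd, hence non-bipartite, component contributing nothing to the kernel. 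This is exactly the invariant that blocks the trade $C_k\cup 2K_2\rightsquigarrow C_4\cup P_{k-3}\cup P_3$ responsible for the exceptional family of Theorem~\ref{11t}: that substitution preserves the number of bipartite components only when $C_k$ is even, so under the all-odd hypothesis no such even-cycle-for-paths exchange is spectrally available. Together with the count of bipartite components this rigidifies the bipartite part of $F'$, while the remaining elimination of even cycles and of paths longer than $P_2$ is carried out by the subgraph-count comparison below.

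It remains to identify $F'$ exactly and, in particular, to recover the multiset $\{k_1,\dots,k_t\}$; this last, largely bookkeeping, step is where the case $t\ge 2$ differs from Theorem~\ref{11t}. The short-subgraph counts obtained in the first step, read through the cone, count progressively the base edges and then the base paths and cycles of increasing length; comparison with the values for $G$ leaves $F'$ a disjoint union of odd cycles together with $qK_2\cup sK_1$. Finally, the nontrivial eigenvalues of the cyclic blocks appear in $\operatorname{spec}_Q(G)$ as the values $3+2\cos(2\pi j/k_i)$ $(1\le j\le k_i-1)$, and since these are distinct across distinct odd cycle lengths, matching them fixes the multiset of lengths; the point requiring care, and the reason for the larger bound $n\ge 33$, is to guarantee that no redistribution of the total cycle length $\sum_i k_i$ among a different multiset of admissible odd lengths can simultaneously reproduce all the recovered moments. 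Once the multiset is fixed we conclude $F'\cong C_{k_1}\cup\cdots\cup C_{k_t}\cup qK_2\cup sK_1$, whence $F\cong G$, as desired.
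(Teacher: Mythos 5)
Your outline gets the second half of the argument essentially right, but the step you yourself flag as ``the main obstacle'' --- forcing a unique vertex of degree $n-1$ and confining all other degrees to $\{1,2,3\}$ from low-order $Q$-spectral moments --- is a genuine gap, and no choice of $n$ makes the inequalities you allude to ``decisive.'' There are two problems. First, a circularity: $T_3$ and $T_4$ mix the degree power sums with triangle and quadrangle counts, so you cannot ``extract, after subtracting the known degree contributions, the number of triangles, quadrangles and short paths'' before the degree sequence is known. Second, and more fundamentally, the moment identities only yield a small linear system relating $n_1(F),n_2(F),n_3(F),n_4(F)$, $d_1(F)$ and $\varsigma_F(C_3)$, and that system has spurious solutions that moments of order $\le 4$ can never exclude: the paper's system \eqref{e62} exhibits degree sequences with $d_1(F)=n-2$ (compensated by many $4$-vertices and pendant vertices, $s\in\{1,2,3\}$) consistent with all the recovered moments, and \eqref{e64} shows that even after $d_1(F)=n-1$ is known, $n_4(F)\ge 1$ remains consistent. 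The paper eliminates these cases not by moments but by genuinely spectral arguments: connectivity, $d_1(F)\ge n-3$ and $d_2(F)\le 4$ come from Lemma~\ref{d1-d2-lem} (which needs $\chi_1>n>5\ge\chi_2\ge\chi_n>0$); $d_1(F)\ge n-2$ comes from the bound $\chi_1\le d_1+3$ of Lemma~\ref{23l}; the case $d_1(F)=n-2$ is killed because the pendant-vertex eigenvector construction of Lemma~\ref{eigenvalue1-lem} would force $m_F(1)$ to exceed the bound on $m_G(1)$ from Remark~\ref{r3.2} (this is Lemma~\ref{l6.1}); and $n_4(F)\ge1$ is killed in Lemma~\ref{l6.2} by the forbidden-subgraph Lemmas~\ref{leq5-lem} and~\ref{leq5-lem-2} (valid because $\chi_2=5$) combined with a triangle count. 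None of this is recoverable from power sums alone, which is precisely why $Q$-cospectral graphs need not share degree sequences in general.

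The remainder of your plan matches the paper's Section~\ref{sec6} in substance and would work once the degree sequence is secured: the $T_4$ comparisons (Lemmas~\ref{T4F*-lem} and~\ref{T4F**-lem}) together with the bipartite-component count of $F-v_1$ (Lemma~\ref{mul-0-lem}), transported by interlacing (Lemma~\ref{DeleteVertex-lem}) into $m_F((0,1])\ge q+s+1$ against $m_G((0,1])=q+s$ --- this is where oddness enters --- jointly eliminate long paths and even cycles; note that neither ingredient suffices alone, since the trade $C_k\cup 2K_2\rightsquigarrow C_4\cup P_{k-3}\cup P_3$ preserves $T_4$ exactly. Two smaller corrections to your last step: the values $3+2\cos(2\pi j/k)$ are \emph{not} distinct across distinct cycle lengths (e.g.\ $C_3$ and $C_6$ share an eigenvalue), so the multiset of lengths must be recovered by repeatedly matching the largest remaining eigenvalue $3+2\cos(2\pi/k)$, which is strictly increasing in $k$, after first getting $z=t$ from $m_F(5)=m_G(5)$; and this peeling needs no lower bound on $n$ and no moment computation, so your concern about ``redistribution of the total cycle length'' is unfounded --- the bound $n\ge 33$ is consumed entirely by the degree-sequence step you left open.
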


\begin{theorem}\label{13t} Every graph $K_1\vee (C_3\cup C_{k_1}\cup C_{k_2}\cup\cdots \cup C_{k_{t-1}}\cup qK_2\cup sK_1)$, with $t,q,s\ge 1$, $k_i \ge 3$, is not DQS. Moreover, it is $Q$-cospectral with $K_1\vee (K_{1,3}\cup C_{k_1}\cup C_{k_2}\cup\cdots \cup C_{k_{t-1}}\cup qK_2\cup (s-1)K_1)$.
\end{theorem}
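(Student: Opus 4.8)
The plan is to prove the statement by exhibiting the explicit $Q$-cospectral mate and showing it is non-isomorphic to the original graph, so that neither is DQS. Write $G = K_1\vee H_1$ with $H_1 = C_3\cup sK_1\cup S$ and $G' = K_1\vee H_2$ with $H_2 = K_{1,3}\cup (s-1)K_1\cup S$, where $S = C_{k_1}\cup\cdots\cup C_{k_{t-1}}\cup qK_2$ is the common part. First I would record a determinantal formula for the $Q$-characteristic polynomial of a cone. Writing $N = n(H)$ and treating the apex as a bordering row and column, one has
\[
Q(K_1\vee H) = \begin{pmatrix} N & \mathbf{1}^\top \\ \mathbf{1} & Q(H)+I_N \end{pmatrix},
\]
and a Schur-complement expansion (valid as a polynomial identity in $x$ after clearing the finitely many poles) gives
\[
\det\bigl(xI - Q(K_1\vee H)\bigr) = (x-N)\,\phi_H(x-1) - \psi_H(x-1),
\]
where $\phi_H(y) = \det\bigl(yI - Q(H)\bigr)$ and $\psi_H(y) = \mathbf{1}^\top \operatorname{adj}\bigl(yI-Q(H)\bigr)\mathbf{1}$. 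Thus the $Q$-spectrum of a cone over $H$ is governed by the three data $N$, $\phi_H$ and $\psi_H$.

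Second, I would observe that both invariants behave well under disjoint union: for $H = H'\cup H''$ one has $\phi_H = \phi_{H'}\phi_{H''}$ and the derivation-type rule $\psi_H = \psi_{H'}\phi_{H''}+\phi_{H'}\psi_{H''}$, both immediate from the block-diagonal form of $Q(H)$ and the corresponding block form of its adjugate. Consequently, if a single ``gadget'' $A$ can be replaced by another gadget $A'$ with $n(A)=n(A')$, $\phi_A = \phi_{A'}$ and $\psi_A = \psi_{A'}$, then swapping $A$ for $A'$ inside any disjoint union $A\cup S'$ preserves $N$, $\phi$ and $\psi$, hence preserves the entire cone spectrum. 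The whole theorem then reduces to a single atomic identity for the pair $A = C_3\cup K_1$ and $A' = K_{1,3}$, since $H_1 = (C_3\cup K_1)\cup S'$ and $H_2 = K_{1,3}\cup S'$ with $S' = (s-1)K_1\cup S$.

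Third comes the computation. Both $C_3\cup K_1$ and $K_{1,3}$ have four vertices and $Q$-spectrum $\{4,1,1,0\}$, so $\phi_{C_3\cup K_1} = \phi_{K_{1,3}} = y(y-4)(y-1)^2$. For the second invariant I would compute $\psi$ for each piece: using that $\mathbf{1}$ is a $Q(C_3)$-eigenvector (with $yI-Q(C_3)=(y-1)I_3-J_3$) one gets $\psi_{C_3} = 3(y-1)^2$ and $\psi_{K_1}=1$, whence $\psi_{C_3\cup K_1} = \psi_{C_3}\,\phi_{K_1}+\phi_{C_3}\,\psi_{K_1} = 4(y-1)^3$; solving $\bigl(yI-Q(K_{1,3})\bigr)z = \mathbf{1}$ and forming $\det\bigl(yI-Q(K_{1,3})\bigr)\cdot(\mathbf{1}^\top z)$ gives $\psi_{K_{1,3}} = 4(y-1)^3$ as well. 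The two gadgets therefore agree in both invariants, which by the reduction shows that $G$ and $G'$ are $Q$-cospectral; moreover the displayed formula yields their common spectrum explicitly, as claimed in the last sentence of the introduction.

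Finally, non-isomorphism. In any cone $K_1\vee H$ a non-apex vertex has degree $1$ exactly when it is isolated in $H$, while the apex has degree $N\ge 4$ here; hence the number of degree-one vertices of the cone equals the number of isolated vertices of its base. This number is $s$ for $G$ and $s-1$ for $G'$, so the two graphs have different degree sequences and cannot be isomorphic. The main obstacle I anticipate is precisely the second invariant $\psi$: it is \emph{not} enough that the bases be $Q$-cospectral (that only delivers $\phi_{H_1}=\phi_{H_2}$), and the delicate point is that the off-diagonal-sensitive quantity $\psi_H = \mathbf{1}^\top\operatorname{adj}\bigl(yI-Q(H)\bigr)\mathbf{1}$ must coincide as well; establishing the atomic identity $\psi_{C_3\cup K_1}=\psi_{K_{1,3}}$, together with justifying that the Schur-complement expansion is a genuine polynomial identity, is the technical heart of the argument.
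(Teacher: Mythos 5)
Your proof is correct, and it takes a genuinely different route from the paper. The paper proves this theorem by brute force: it computes the full $Q$-spectrum of $K_1\vee(C_{k_1}\cup\cdots\cup C_{k_t}\cup qK_2\cup sK_1)$ (Lemma~\ref{SQG-lem}, via explicit eigenvectors for the eigenvalues $1$, $3$, $5$ and $3+2\cos\frac{2j\pi}{k_i}$, plus an equitable quotient matrix for the remaining quartic), computes the full $Q$-spectrum of $K_1\vee(K_{1,3}\cup C_{k_1}\cup\cdots\cup C_{k_{t-1}}\cup qK_2\cup(s-1)K_1)$ the same way (Lemma~\ref{SQF-lem}), and then observes that setting $k_t=3$ makes the two multisets coincide, since $C_3$ contributes exactly $3+2\cos\frac{2j\pi}{3}=2$ with multiplicity two, matching the $2^{(2)}$ coming from $K_{1,3}$. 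Your argument instead isolates the mechanism behind the coincidence: the bordered-determinant identity $\det(xI-Q(K_1\vee H))=(x-N)\phi_H(x-1)-\psi_H(x-1)$, the derivation rule for $\psi$ under disjoint union, and the single atomic identity that $C_3\cup K_1$ and $K_{1,3}$ share $(N,\phi,\psi)=(4,\,y(y-4)(y-1)^2,\,4(y-1)^3)$; all of your computations check out, including the Schur-complement justification and the degree-sequence argument for non-isomorphism (which the paper leaves implicit). What your approach buys is modularity and generality: it explains \emph{why} the swap works, requires no global spectrum computation or linear-independence bookkeeping in degenerate cases, and yields a reusable replacement principle (any two gadgets agreeing in $N$, $\phi$ and $\psi$ can be exchanged under any cone). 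What the paper's approach buys is the explicit spectra themselves, which it needs anyway for Remark~\ref{r3.2}, the eigenvalue-location facts and the proofs of Theorems~\ref{11t} and~\ref{12t}; your formula gives the characteristic polynomial only in recursive product form, so your closing remark that it ``yields their common spectrum explicitly'' is a slight overstatement, though immaterial to this theorem.
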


Note that the second theorem does not generalize the first one, since they impose different requirements on the order of the graph. We point out that the $Q$-spectra of the graphs under consideration are explicitly computed, and several auxiliary statements are formulated in a form that also accommodates the case where some cycles degenerate into digons, i.e., two parallel edges connecting the same pair of vertices. In this way, the corresponding results naturally extend to the setting of multigraphs.

The remainder of the paper is organized as follows. Section~\ref{sec2} introduces additional terminology and notation, as well as some known results. In Section~\ref{sec3}, we compute the $Q$-spectrum of cones over some particular graphs and establish auxiliary results concerning the largest $Q$-eigenvalue, which serve as a basis for the proof of Theorem~\ref{13t}. In particular, this section establishes the $Q$-spectrum of the main structure studied in this paper: A cone over a disjoint union of cycles, edges and isolated vertices. Section~\ref{sec4} focuses on the spectral moments of the signless Laplacian matrix for cones considered in the previous section, as well as for certain related conical graphs. Building on these findings, the proofs of Theorems~\ref{11t} and~\ref{12t} are carried out in Sections~\ref{sec5} and~\ref{sec6}, respectively.

\section{Preliminaries}\label{sec2}

Let $N_{G}(v)$   and $d_{G}(v)$  be the set of neighbours of a vertex $v$
 and the degree of  $v$ in a graph $G$, respectively.  $G[Y]$  denotes the subgraph induced by $Y$, where $Y\subset V(G)$. The graphs obtained by deleting edge $e$  and vertex $v$ of $G$ are denoted by $G-e$ and   $G-v$, respectively.  The eigenvalues of $A(G)$ and $Q(G)$ are denoted by $$\lambda_1(G)\ge \lambda_2(G)\ge \cdots\ge \lambda_n(G)  \text{\quad and \quad} \chi_{1}(G)\ge \chi_{2}(G)\ge \cdots\ge \chi_{n}(G),$$ respectively. Since $Q(G)$ is positive semidefinite,  $\chi_n(G)\ge 0$ is always true (see \cite{Book-Stanic}). The $Q$-spectrum of $G$ is denoted by $S_Q(G)$; of course, it is considered as a multiset.

Henceforth,  $m_{G}(I^*)$ denotes the number of the eigenvalues in the interval $I^*$ in~$S_Q(G)$. In particular, $m_{G}(\rho)$ denotes the multiplicity of the eigenvalue~$\rho$ in~$S_Q(G)$. Besides, $\rho^{(r)}$ denotes either $r$ copies of the real number $\rho$, or a vector of length $r$ with all entries equal to $\rho$, depending on the context.

 We assume that  the degree $d_i(G)$  is attained by a vertex $v_i$ $(1\leq i\leq n)$ of $G$. In this context, $n_r(G)$  denotes the number of vertices of degree $r$ (for short, \textit{$r$-vertices}) in  $V(G)\setminus \{v_1\}$, that is, $$n_r(G)=|\{v\,:\,  d_G(v)=r ~\text{and}~v\in V(G)\setminus \{v_1\}\}|,$$
where $v_1$ is a vertex with the maximum degree. Throughout this paper, we assume that  vertex degrees are arranged in non-decreasing order:
$d_n \le d_{n-1} \le \cdots \le d_1$.

For notational simplicity, we occasionally omit the graph $G$ in the preceding expressions.

Let $\rho_{i}(N)$, $1\le i\le n$, denote  the $i$th largest eigenvalue of an $n\times n$  real symmetric matrix~$N$. Suppose now that the columns of $N$ are indexed by
$Y=\{1,2,\ldots,n\}$. For a partition ${Y_1,Y_2,\ldots,Y_r}$ of $Y$, we set
\begin{equation*}
 N=   \begin{bmatrix}
    N_{1,1} & \ldots & N_{1,r}\\
    \vdots & \ddots & \vdots\\
    N_{r,1} & \ldots & N_{r,r}
    \end{bmatrix}
\end{equation*}
where $N_{i,j}$ denotes the block of $N$ formed by the rows in $Y_i$ and the columns in $Y_j$. If $c_{i,j}$ denotes the average row sum in $N_{i,j}$, then the matrix $M=[c_{i,j}]$ is known as the \textit{quotient matrix} of $N$. If, for every $i, j$,  all row sums of $N_{i,j}$ are equal, then the corresponding partition is called \textit{equitable}.

\begin{lemma} [\cite{BroSpe}]\label{equitable}
Let $N$ be a non-negative irreducible real symmetric matrix, and $M$ the quotient matrix of an equitable partition of  $N$. If $\rho$ is an eigenvalue of $M$, then $\rho$ is also an eigenvalue of  $N$. Furthermore, the largest eigenvalues of $N$ and $M$ coincide.
 \end{lemma}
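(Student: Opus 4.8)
The plan is to exploit the \emph{characteristic matrix} $S$ of the equitable partition, namely the $n\times r$ matrix whose $(i,j)$ entry equals $1$ when the index $i$ lies in $Y_j$ and $0$ otherwise. The first step is to verify the fundamental identity $NS=SM$. Indeed, the $(i,j)$ entry of $NS$ is the sum of the entries of row $i$ of $N$ over the columns belonging to $Y_j$; by equitability this sum does not depend on the particular $i\in Y_p$ but only on the part $Y_p$ containing it, and it equals the constant (hence average) row sum $c_{p,j}=M_{p,j}$, which is precisely the $(i,j)$ entry of $SM$. Since every part $Y_p$ is nonempty, the columns of $S$ have pairwise disjoint supports, so $S$ has full column rank $r$ and $Su\neq 0$ whenever $u\neq 0$.

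The first assertion is then immediate. If $Mu=\rho u$ with $u\neq 0$, then $N(Su)=(NS)u=(SM)u=\rho\,(Su)$, and because $Su\neq 0$ the vector $Su$ is an eigenvector of $N$ for $\rho$; thus $\rho$ is an eigenvalue of $N$. In particular, taking $\rho=\rho_1(M)$ yields $\rho_1(M)\le \rho_1(N)$, one half of the second assertion.

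For the reverse inequality $\rho_1(N)\le\rho_1(M)$, I would combine the symmetry and irreducibility hypotheses. The identity $NS=SM$ shows that the column space $\mathrm{col}(S)$ is $N$-invariant; since $N$ is symmetric, the orthogonal complement $\mathrm{col}(S)^{\perp}$ is $N$-invariant as well, so $N$ respects the decomposition $\mathbb{R}^{n}=\mathrm{col}(S)\oplus\mathrm{col}(S)^{\perp}$, and its restriction to $\mathrm{col}(S)$, expressed in the basis of the columns of $S$, is exactly $M$. By Perron--Frobenius, since $N$ is non-negative and irreducible, its spectral radius $\rho_1(N)$ is a simple eigenvalue admitting a strictly positive eigenvector $x$. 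As the two summands above are invariant, the one-dimensional eigenspace spanned by $x$ lies entirely in one of them; and since $\mathbf{1}=S\mathbf{1}_{r}\in\mathrm{col}(S)$ gives $\langle x,\mathbf{1}\rangle>0$, we cannot have $x\in\mathrm{col}(S)^{\perp}$. Hence $x\in\mathrm{col}(S)$, so $\rho_1(N)$ is an eigenvalue of $N$ restricted to $\mathrm{col}(S)$, that is, of $M$. This forces $\rho_1(N)\le\rho_1(M)$ and, together with the previous paragraph, the equality of the two largest eigenvalues.

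I expect the reverse inequality to be the main obstacle, as it is the only place where the full strength of the hypotheses is needed: symmetry supplies the complementary invariant subspace, while non-negativity and irreducibility supply, via Perron--Frobenius, both the simplicity of $\rho_1(N)$ and a positive Perron eigenvector, the latter being exactly what rules out $x\in\mathrm{col}(S)^{\perp}$. The first assertion and the bound $\rho_1(M)\le\rho_1(N)$, by contrast, are purely algebraic consequences of $NS=SM$ and require no spectral hypotheses on $N$.
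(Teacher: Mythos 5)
Your proof is correct. Note, however, that the paper offers no proof of this lemma at all: it is quoted from the cited reference (Brouwer--Haemers, \emph{Spectra of Graphs}), so the natural comparison is with the standard textbook argument. Your first part (the identity $NS=SM$ for the characteristic matrix $S$, full column rank of $S$, and lifting eigenvectors of $M$ to eigenvectors of $N$) is exactly the classical argument. For the equality of largest eigenvalues you take a genuinely different route: the textbook proof goes ``upward'', taking a Perron eigenvector $u\ge 0$ of the nonnegative matrix $M$, lifting it to the nonnegative eigenvector $Su$ of $N$, and invoking the fact that an irreducible nonnegative matrix admits a nonnegative eigenvector only for its Perron root; your proof instead goes ``downward'', using symmetry of $N$ to split $\mathbb{R}^n=\mathrm{col}(S)\oplus\mathrm{col}(S)^{\perp}$ into $N$-invariant subspaces and then using simplicity and strict positivity of the Perron vector $x$ of $N$ (via $\langle x,\mathbf{1}\rangle>0$) to force $x\in\mathrm{col}(S)$. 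Both arguments are sound. Your version exploits the symmetry hypothesis in an essential way (it supplies the complementary invariant subspace), whereas the classical lifting argument does not need symmetry at that step and thus extends to non-symmetric irreducible nonnegative matrices; on the other hand, your route avoids any discussion of Perron--Frobenius theory for the quotient matrix $M$ itself. One step you state rather tersely --- that a one-dimensional eigenspace must lie entirely inside one summand of a direct sum of invariant subspaces --- deserves the two-line verification (decompose $x=u+w$, note $u,w$ are themselves $\rho_1$-eigenvectors or zero by uniqueness of the decomposition, and use simplicity plus $\mathrm{col}(S)\cap\mathrm{col}(S)^{\perp}=\{0\}$), but this is routine and does not affect correctness.
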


The remainder of this section presents some results regarding $Q$-eigenvalues, which can be found in the relevant literature.

\begin{lemma}[\cite{Heu1}] \label{DeleteEdge-lem} Let $G$ be a graph of order $n$ $(n\geq 3)$ and $e\in E(G)$. Then
$\chi_i(G)\ge \chi_i(G-e)$ holds for $1\le i \le n$.
\end{lemma}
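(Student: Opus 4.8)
The plan is to exploit the well-known quadratic-form identity for the signless Laplacian, which reduces the statement to the Loewner monotonicity of $Q$ under edge deletion together with the Courant--Fischer min--max principle.

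First, I would record the fundamental identity
\[
x^\top Q(G)\,x \;=\; \sum_{\{a,b\}\in E(G)} (x_a+x_b)^2,
\]
valid for every $x\in\mathbb{R}^n$, which follows directly from $Q(G)=D(G)+A(G)$ by expanding the right-hand side and collecting the diagonal contributions $d_G(a)x_a^2$ and the off-diagonal contributions $2x_ax_b$. Writing $e=\{u,v\}$, the edge set of $G-e$ is $E(G)\setminus\{e\}$, so subtracting the two quadratic forms leaves a single nonnegative term:
\[
x^\top Q(G)\,x - x^\top Q(G-e)\,x \;=\; (x_u+x_v)^2 \;\ge\; 0.
\]
Equivalently, $Q(G)-Q(G-e)=(\mathbf{e}_u+\mathbf{e}_v)(\mathbf{e}_u+\mathbf{e}_v)^\top$ is a rank-one positive semidefinite matrix, so $Q(G)\succeq Q(G-e)$ in the Loewner order.

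Next, I would invoke the Courant--Fischer characterization
\[
\chi_i(G)=\max_{\substack{S\subseteq\mathbb{R}^n\\ \dim S=i}}\ \min_{\substack{x\in S\\ x\neq 0}}\ \frac{x^\top Q(G)\,x}{x^\top x}.
\]
Because the pointwise inequality above gives $x^\top Q(G)\,x\ge x^\top Q(G-e)\,x$ for every $x$, each inner minimum for $Q(G)$ dominates the corresponding one for $Q(G-e)$ over the same subspace $S$; taking the maximum over all $i$-dimensional subspaces $S$ preserves the inequality and yields $\chi_i(G)\ge\chi_i(G-e)$ for all $1\le i\le n$. One could alternatively cite Weyl's inequality $\chi_i(A+B)\ge\chi_i(A)$ for $B\succeq 0$, applied to $A=Q(G-e)$ and $B=(\mathbf{e}_u+\mathbf{e}_v)(\mathbf{e}_u+\mathbf{e}_v)^\top$.

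There is essentially no genuine obstacle here: the content lies entirely in recognizing the rank-one positive semidefinite structure of the perturbation, after which the conclusion is a textbook consequence of eigenvalue monotonicity. The only point requiring a line of care is the bookkeeping in the quadratic-form identity, ensuring that deleting $e$ removes precisely the term $(x_u+x_v)^2$ and nothing else; the order hypothesis $n\ge 3$ plays no essential role beyond guaranteeing that $G-e$ remains a well-defined graph on the same vertex set.
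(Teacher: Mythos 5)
Your proof is correct. Note that the paper itself gives no proof of this lemma: it is quoted as a known result from the cited reference of van den Heuvel, so there is no in-paper argument to compare against. Your derivation --- the identity $x^\top Q(G)x=\sum_{\{a,b\}\in E(G)}(x_a+x_b)^2$, the observation that $Q(G)-Q(G-e)=(\mathbf{e}_u+\mathbf{e}_v)(\mathbf{e}_u+\mathbf{e}_v)^\top$ is rank-one positive semidefinite, and the application of Courant--Fischer (or Weyl's inequality) --- is precisely the standard argument underlying that reference, and it is complete; you are also right that the hypothesis $n\ge 3$ is inessential for the inequality itself.
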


\begin{lemma}[\cite{Liu-wheel}] \label{DeleteVertex-lem} Let $G$ be a graph of order $n$ and  $v$  a vertex of degree $n-1$ in $G$. Then
$$\chi_{i}(G)-1\ge \chi_{i}(G-v)\ge \chi_{i+1}(G)-1$$ holds for $1\le i \le n-1$.
\end{lemma}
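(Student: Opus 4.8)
The plan is to exploit the block structure that the signless Laplacian acquires when the deleted vertex $v$ is adjacent to every other vertex, and then to reduce the claim to the Cauchy interlacing theorem. First I would order the vertices so that $v$ occupies the last position. Since $d_G(v)=n-1$, the row and column of $A(G)$ indexed by $v$ consist entirely of ones off the diagonal, and every remaining vertex $u$ satisfies $d_G(u)=d_{G-v}(u)+1$, because exactly one of its incident edges is the edge $uv$. Writing $Q(G)=D(G)+A(G)$ in the induced $2\times 2$ block form with respect to the partition $\{V(G)\setminus\{v\},\{v\}\}$ then yields
\[
Q(G)=\begin{bmatrix} Q(G-v)+I_{n-1} & \mathbf{1}\\ \mathbf{1}^{\top} & n-1\end{bmatrix},
\]
where $\mathbf{1}$ denotes the all-ones vector of length $n-1$; here the summand $I_{n-1}$ records precisely the uniform degree increment contributed by the dominating vertex $v$.

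The decisive observation is that the leading principal submatrix of $Q(G)$ equals $B:=Q(G-v)+I_{n-1}$, so its eigenvalues are exactly $\chi_i(G-v)+1$ for $1\le i\le n-1$; that is, $\rho_i(B)=\chi_i(G-v)+1$ in the notation introduced for eigenvalues of symmetric matrices. Next I would apply the Cauchy interlacing theorem (see \cite{BroSpe}) to the symmetric matrix $Q(G)$ and its $(n-1)\times(n-1)$ leading principal submatrix $B$, which gives $\chi_i(G)\ge \rho_i(B)\ge \chi_{i+1}(G)$ for every $1\le i\le n-1$. Substituting $\rho_i(B)=\chi_i(G-v)+1$ and subtracting $1$ throughout converts this at once into the asserted chain $\chi_i(G)-1\ge \chi_i(G-v)\ge \chi_{i+1}(G)-1$.

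There is no serious analytic obstacle here; the entire argument hinges on the bookkeeping step that identifies the leading block as $Q(G-v)+I_{n-1}$ rather than $Q(G-v)$, which in turn relies on the fact that adjacency to the dominating vertex $v$ raises each remaining degree by exactly one while leaving all other adjacencies among $V(G)\setminus\{v\}$ unchanged. Once this shift is recognized, the statement follows immediately from interlacing, and the hypothesis that $v$ has degree $n-1$ is used exactly once, namely to guarantee the clean correction $+I_{n-1}$ across the whole leading block.
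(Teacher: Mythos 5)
Your proof is correct: the block decomposition $Q(G)=\begin{bmatrix} Q(G-v)+I_{n-1} & \mathbf{1}\\ \mathbf{1}^{\top} & n-1\end{bmatrix}$ is exactly right, and Cauchy interlacing applied to the leading principal submatrix $Q(G-v)+I_{n-1}$ yields the stated chain of inequalities. The paper quotes this lemma from \cite{Liu-wheel} without proof, and your argument is essentially the standard one given there, so nothing further is needed.
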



\begin{lemma}[\cite{18}]\label{mul-0-lem} The multiplicity of the eigenvalue zero in the $Q$-spectrum of a graph is equal to the number of bipartite components.
\end{lemma}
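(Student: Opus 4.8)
The plan is to exploit the factorization of the signless Laplacian through the (unoriented) incidence matrix. Let $R$ be the $n\times m$ vertex--edge incidence matrix of $G$, whose $(v,e)$-entry equals $1$ when $v$ is an endpoint of $e$ and $0$ otherwise. A direct computation of $RR^{\top}$ recovers the degree $d_G(v)$ on the diagonal and the adjacency entries off the diagonal, so that $Q(G)=RR^{\top}$. This presentation makes $Q(G)$ positive semidefinite and, more importantly, reduces the analysis of its kernel to that of $R^{\top}$: for every vector $x$ indexed by $V(G)$ one has $x^{\top}Q(G)x=\|R^{\top}x\|^{2}$, whence $Q(G)x=0$ if and only if $R^{\top}x=0$. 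Reading the latter condition edge by edge, it says precisely that $x_u+x_v=0$ for every edge $uv\in E(G)$.

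The task thus becomes to count the dimension of the space of vectors $x$ satisfying $x_u=-x_v$ along every edge. Since this constraint couples only vertices lying in the same connected component, the kernel splits as a direct sum over the components of $G$, and I would compute the contribution of a single component $C$ in isolation. Fixing a reference vertex $w\in C$ with value $x_w=a$, the relation $x_u=-x_v$ propagates along any walk and forces $x_v=(-1)^{\ell}a$ for every vertex $v$ reachable from $w$ by a walk of length $\ell$; the whole count hinges on whether this assignment is consistent.

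Here lies the only substantive step. If $C$ is bipartite, every walk between two fixed vertices has a well-defined parity, so $x_v=(-1)^{\operatorname{dist}(w,v)}a$ is well defined and, for each choice of $a$, produces a genuine kernel vector supported on $C$ --- explicitly, the vector assigning $a$ to one colour class of the bipartition and $-a$ to the other --- which spans a one-dimensional subspace. If instead $C$ is non-bipartite, it contains an odd closed walk through $w$, and propagating the sign along it forces $a=-a$, hence $a=0$ and $x\equiv 0$ on $C$; a non-bipartite component therefore contributes nothing to the kernel. Summing the per-component contributions shows that the nullity of $Q(G)$, that is $m_G(0)$, equals the number of bipartite components. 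The point demanding care is precisely the odd-cycle obstruction in the non-bipartite case, which is the mechanism separating the two regimes.
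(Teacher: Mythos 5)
Your proof is correct and is the standard argument: the paper states this lemma without proof, citing \cite{18}, and the factorization $Q(G)=RR^{\top}$ through the unoriented incidence matrix, with the kernel condition $x_u+x_v=0$ on every edge and the odd-cycle obstruction in non-bipartite components, is precisely the classical proof given in that reference. Nothing to add.
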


\begin{lemma}[\cite{Liu21}]\label{d1-d2-lem}  If $G$ is a graph of order $n$ $(n\geq 12)$ with $\chi_{1}>n>5\ge \chi_{2}\ge \chi_{n}>0$, then $G$ is connected with $d_{1}\geq n-3$  and $d_{2}\leq4$.
\end{lemma}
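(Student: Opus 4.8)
The plan is to derive from the spectral hypotheses two numerical facts and then read off the three conclusions in the order $d_2\le 4$, $d_1\ge n-3$, connectedness. The first input is the standard estimate $\chi_1\le \max_{uv\in E(G)}\{d_G(u)+d_G(v)\}\le d_1+d_2$ (see \cite{Book-Stanic}); together with $\chi_1>n$ it yields $d_1+d_2>n\ge 12$. The second input is a lower bound for $\chi_2$ obtained by passing to a subgraph: if $v_1,v_2$ are two vertices of degrees $d_1\ge d_2$, then deleting every edge of $G$ not incident with $v_1$ or $v_2$ produces a spanning subgraph $H$, and Lemma~\ref{DeleteEdge-lem} gives $\chi_2(G)\ge \chi_2(H)$.

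The crux, and the step I expect to be hardest, is $d_2\le 4$. Assume $d_2\ge 5$. The subgraph $H$ above is a double star in which $v_1$ and $v_2$ may be adjacent and may share neighbours, and I would compute its spectrum through the quotient of Lemma~\ref{equitable}. When $N(v_1)$ and $N(v_2)$ are disjoint, $H$ splits off two stars and $\chi_2(H)=d_2+1\ge 6$ immediately. The dangerous regime is when $v_1v_2\in E(G)$ and the two neighbourhoods overlap heavily, since then $\chi_2$ can drop to about $d_2-1$ (the diamond and the book $K_2\vee\overline{K_t}$ are the telling small cases). The quotient computation shows that, across all configurations, $\chi_2(H)\ge d_2-1$, so $d_2\le \chi_2+1\le 6$; moreover the floor $\chi_2(H)=d_2-1$ is attained only in the balanced, fully-shared (book) configuration, i.e. when $d_1=d_2$. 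This is precisely where $n\ge 12$ enters. If $d_1=d_2$ then $d_1+d_2>12$ forces $2d_2\ge 13$, hence $d_2\ge 7$ and $\chi_2\ge d_2-1\ge 6>5$. If instead $d_1>d_2$, the quotient eigenvalue that governs $\chi_2(H)$ rises strictly above $d_2-1$: one checks that for $d_2=6$ this already gives $\chi_2(H)>5$, while for $d_2=5$ (where $d_1+d_2>12$ forces $d_1\ge 8$) the relevant eigenvalue tends to $6$ as $d_1$ grows and exceeds $5$ throughout the admissible range. In every case $\chi_2>5$, contradicting $5\ge \chi_2$. The genuinely delicate part is this boundary analysis for $d_2\in\{5,6\}$: one must verify that the only configurations pushing $\chi_2$ down to $5$ are the balanced ones, which $n\ge 12$ rules out through $d_1+d_2>12$.

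Once $d_2\le 4$ is known, the bound on $d_1$ is immediate: from $n<\chi_1\le d_1+d_2\le d_1+4$ we obtain $d_1>n-4$, that is, $d_1\ge n-3$.

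Finally, connectedness follows from $\chi_n>0$ together with $d_1\ge n-3$. By Lemma~\ref{mul-0-lem}, $\chi_n>0$ means $G$ has no bipartite component; in particular $G$ has no isolated vertex and no $K_2$ component. Since $v_1$ has degree at least $n-3$, at most two vertices lie outside $N(v_1)\cup\{v_1\}$, while $N(v_1)\cup\{v_1\}$ is connected. If there are no outside vertices, $v_1$ is universal and $G$ is connected. If there is one outside vertex $w$, then $w$ is not isolated, so it has a neighbour, necessarily inside $N(v_1)$, and again $G$ is connected. If there are two, say $w_1,w_2$, then every vertex of $N(v_1)\cup\{v_1\}$ lies in the component of $v_1$, so the only way to disconnect $G$ would be for $\{w_1,w_2\}$ to contain a whole component, which would be $K_1$, $2K_1$ or $K_2$, all bipartite --- a contradiction. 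Therefore $G$ is connected, which completes the proof.
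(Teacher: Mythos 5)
The paper itself does not prove this lemma --- it is quoted from \cite{Liu21} --- so your proposal has to stand on its own, and it has a genuine gap exactly at the step you yourself identify as the crux: $d_{2}\le 4$. Your outer scaffolding is correct: the bound $\chi_1\le\max_{uv\in E(G)}\{d(u)+d(v)\}\le d_1+d_2$ gives $d_1+d_2>n\ge 12$; once $d_2\le 4$ is available, $d_1\ge n-3$ is immediate; and connectedness then follows cleanly from Lemma~\ref{mul-0-lem}, since any component avoiding $v_1$ would have at most two vertices and hence be bipartite, contradicting $\chi_n>0$. The reduction to the spanning subgraph $H$ (edges incident with $v_1$ or $v_2$ only) via Lemma~\ref{DeleteEdge-lem} is also legitimate. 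But the heart of the argument --- ``the quotient computation shows that, across all configurations, $\chi_2(H)\ge d_2-1$,'' with equality only for balanced books, and ``one checks'' that $\chi_2(H)>5$ whenever $d_2\in\{5,6\}$ and $d_1>d_2$ --- is asserted, never carried out. The inequality $\chi_2\ge d_2-1$ is essentially a known theorem of Das, which you neither cite nor derive; and even granting it, it only yields $d_2\le 6$, so the entire burden of the lemma falls on the boundary analysis for $d_2\in\{5,6\}$, $d_1+d_2\ge 13$, which you explicitly concede is ``the genuinely delicate part'' and leave unverified. Since every other conclusion of the lemma hinges on $d_2\le 4$, nothing is actually proven.

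There is also a concrete misstep in your case split. You claim that when $N(v_1)$ and $N(v_2)$ are disjoint, ``$H$ splits off two stars and $\chi_2(H)=d_2+1\ge 6$.'' That holds only if additionally $v_1\not\sim v_2$; if $v_1v_2\in E(G)$ and there are no common neighbours, $H$ is a connected double star, for which $\chi_2$ is strictly smaller than $d_2+1$ --- in the balanced case it equals $d_2$ exactly (e.g.\ the double star with two adjacent centers of degree $3$ has second largest $Q$-eigenvalue $3$). This configuration falls between your two cases and is never treated. To close the gap you would need to do the work you deferred: for each admissible choice of $(d_1,d_2)$ with $d_2\in\{5,6\}$ and $d_1+d_2\ge 13$, each number $c$ of common neighbours, and each value of $\varepsilon=\mathds{1}[v_1\sim v_2]$, form the equitable partition $\{v_1\},\{v_2\}$, common neighbours, private neighbours of $v_1$, private neighbours of $v_2$, take the (at most $5\times 5$) quotient matrix of $Q(H)$, and show by a sign-change argument (its characteristic polynomial evaluated at $5$ and at the known location of $\chi_1$) that the quotient, hence $Q(H)$ by Lemma~\ref{equitable}, hence $Q(G)$ by Lemma~\ref{DeleteEdge-lem}, has a second eigenvalue exceeding $5$; spot checks confirm this (e.g.\ $d_1=8$, $d_2=5$ fully shared gives $\chi_2(H)\in(5,6)$), but the uniform verification is precisely the content of the lemma and cannot be waved through.
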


\begin{lemma}[\cite{Ye2024LAA}]\label{d1-lem}  Let $G$ be a  graph of order $n$ $(n\ge 16)$ with $\chi_{1}(G)>n>5> \chi_{2}(G)\ge \chi_{n}(G)>0$.   If  $F$ and $G$ are $Q$-cospectral, then $F$ is  connected, along  with  $d_{2}(F)\leq 4$ and  $d_1(F)=d_1(G)\in \{n-1,n-2\}$.
\end{lemma}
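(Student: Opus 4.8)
Since $F$ and $G$ are $Q$-cospectral, $F$ inherits every spectral hypothesis imposed on $G$: we have $\chi_1(F)=\chi_1(G)>n$, $5>\chi_2(F)=\chi_2(G)$, and $\chi_n(F)=\chi_n(G)>0$. Because $n\ge 16>12$ and the strict inequality $5>\chi_2$ certainly gives $5\ge \chi_2$, Lemma~\ref{d1-d2-lem} applies verbatim to $F$ and to $G$ alike, yielding at once that both graphs are connected with $d_2\le 4$ and $d_1\ge n-3$. This already settles the connectivity claim and the bound $d_2(F)\le 4$. What remains is to sharpen $d_1\ge n-3$ to $d_1\in\{n-1,n-2\}$ and then to prove the equality $d_1(F)=d_1(G)$.

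For the sharpening I would invoke the well-known edge-degree bound $\chi_1\le \max_{uv\in E}(d_u+d_v)\le d_1+d_2$. Combined with $d_2\le 4$ and $\chi_1>n$ this reproves $d_1\ge n-3$, and it shows that the boundary case $d_1=n-3$ forces $d_2=4$ and squeezes $\chi_1$ into the narrow window $(n,n+1]$. The plan is to argue by contradiction: with $\chi_1$ pressed so close to its upper bound $d_1+d_2$, the unique vertex $v_1$ of degree $n-3$ must have a rigidly prescribed neighbourhood, in particular several neighbours of degree exactly $4$ forming a dense local configuration. I would then locate inside this configuration a principal submatrix of $Q$ — say on $v_1$ together with an induced subgraph of its degree-$4$ neighbours — whose quotient matrix, via Lemma~\ref{equitable} and Cauchy interlacing, already carries a second eigenvalue at least $5$, contradicting $\chi_2<5$. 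Note that the strictness of $\chi_2<5$ (which Lemma~\ref{d1-d2-lem} never used) is precisely what powers this exclusion, leaving $d_1(F),d_1(G)\in\{n-1,n-2\}$.

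It then remains to rule out, say, $d_1(G)=n-1$ together with $d_1(F)=n-2$. Cospectrality fixes $\operatorname{tr}Q=2m$ and $\operatorname{tr}Q^2=\sum_i d_i^2+2m$, so the two degree sequences share both $\sum_i d_i$ and $\sum_i d_i^2$; since in each graph every degree other than the dominant one lies in $\{1,2,3,4\}$, matching these two moments confines the non-dominant degree multisets to a narrow range. Moments alone are not decisive here (the third moment already entangles the triangle counts), so I would pass to the structural side. When $d_1=n-1$ the vertex $v_1$ is dominating, whence $G=K_1\vee(G-v_1)$ and $Q(G)=\left(\begin{smallmatrix} n-1 & \mathbf 1^{\top}\\ \mathbf 1 & Q(G-v_1)+I\end{smallmatrix}\right)$; Lemma~\ref{DeleteVertex-lem} then forces $G-v_1$ to be a graph of maximum degree at most $3$ with $\chi_2(G-v_1)<4$. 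Carrying out the analogous reduction in the $d_1=n-2$ case and comparing the two eigenvalue lists — specifically the portion of the spectrum lying in a fixed small interval, which the two block shapes populate differently — should expose an incompatibility and deliver $d_1(F)=d_1(G)$.

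The main obstacle is the exclusion of $d_1=n-3$ in the second step. The crude inequality $\chi_1\le d_1+d_2$ still permits $\chi_1$ as large as $n+1$ in that case, so it cannot be dismissed by a one-line estimate; one must extract from $\chi_2<5$ enough local rigidity around $v_1$ to build an explicit interlacing witness for a second large eigenvalue. The matching step is delicate for a kindred reason: the second spectral moment genuinely leaves room for $d_1(F)\neq d_1(G)$, so the dominating-vertex decomposition together with the confinement of $\chi_2$ below $5$ must be pushed until the two block structures are seen to produce distinct spectra.
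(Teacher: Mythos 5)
Your opening step is correct and is the natural one: cospectrality transfers all the spectral hypotheses to $F$, and Lemma~\ref{d1-d2-lem} (applicable since $n\ge 16>12$ and $\chi_2<5$ implies $\chi_2\le 5$) gives connectivity, $d_2\le 4$ and $d_1\ge n-3$ for both $F$ and $G$. Everything after that, however, is a plan rather than a proof, and you acknowledge this yourself. The first gap is the exclusion of $d_1=n-3$: your bound $\chi_1\le d_1+d_2$ only reproves $d_1\ge n-3$, and the ``local rigidity plus interlacing witness'' you propose for the boundary case is never constructed. Moreover, this detour is unnecessary: the paper already contains the exact tool, Lemma~\ref{23l}. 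Since each of $F$ and $G$ is connected with $d_2\le 4$ (so $d_n\le 4<8$) and $d_1\ge n-3\ge 13\ge 11$, that lemma yields $\chi_1\le d_1+3$, and $\chi_1>n$ then forces $d_1\ge n-2$ in one line; this is precisely how the same step is carried out in the proof of Lemma~\ref{l6.1}. So the step you describe as ``the main obstacle'' is closable, but not by the route you sketch, and your sketch does not close it.

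The second gap is more serious, because it concerns the real content of the lemma: the equality $d_1(F)=d_1(G)$. Your moment computation ($T_1$ and $T_2$ fix $\sum d_i$ and $\sum d_i^2$) is the right start, but you then concede that moments alone are not decisive and fall back on ``comparing the two eigenvalue lists'' of two block decompositions, which ``should expose an incompatibility.'' That is not an argument: Lemma~\ref{DeleteVertex-lem} gives only interlacing inequalities between $S_Q(G)$ and $S_Q(G-v_1)$, not a computable list, and when $d_1(F)=n-2$ the vertex $v_1$ is not dominating, so there is no analogous join decomposition of $F$ at all. The way this dichotomy is actually resolved (see the proof of Lemma~\ref{l6.1}, which is the template, and the auxiliary Lemmas~\ref{eigenvalue1-lem}, \ref{n4-lem}, \ref{leq5-lem}, \ref{leq5-lem-2}) is to subtract the two degree systems: with $d_1(G)=n-1$ and $d_1(F)=n-2$ one gets, for example, $\bigl(n_3(F)-n_3(G)\bigr)+3\bigl(n_4(F)-n_4(G)\bigr)=n-3$, so $F$ is forced to carry abnormally many vertices of degree $3$ and $4$ (equivalently, the counts $n_i(F)$ are pinned as in \eqref{e61}--\eqref{e62}); one then contradicts the spectral data, e.g.\ by producing too many eigenvalues equal to $1$ via Lemma~\ref{eigenvalue1-lem}, or a forbidden subgraph incompatible with $\chi_2<5$. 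None of this appears in your proposal. As it stands, you have proved that $F$ is connected with $d_2(F)\le 4$ and $d_1(F)\ge n-3$, but neither $d_1\in\{n-1,n-2\}$ nor $d_1(F)=d_1(G)$.
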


\begin{lemma}[\cite{Ye2024LAA}]\label{n4-lem}  Let $G$  be a graph  of order $n$ such that $\chi_2<5$, $d_1=n-1$ and  $d_2\le 4$. If $n\ge 21$,  then $n_4\le 1$ and  every $4$-vertex is adjacent to at most one $3$-vertex.
\end{lemma}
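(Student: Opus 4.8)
The plan is to exploit the unique vertex $v_1$ of degree $n-1$. Since $d_1=n-1$, I write $G=K_1\vee H$ with $H=G-v_1$, and as every vertex other than $v_1$ has degree at most $4$ in $G$, the graph $H$ is subcubic (maximum degree $\le 3$). A degree-$4$ vertex of $G$ is exactly a degree-$3$ vertex of $H$, and a degree-$3$ vertex of $G$ is a degree-$2$ vertex of $H$, so both assertions become statements about $H$: that $H$ has at most one vertex of degree $3$, and that each such vertex has at most one neighbour of degree $2$. I argue by contradiction, in each case producing a configuration that forces a second $Q$-eigenvalue at least $5$, against $\chi_2(G)<5$. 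Two monotonicity facts drive everything: Lemma~\ref{DeleteEdge-lem}, so that $\chi_i$ does not increase under edge deletion and hence $\chi_2(G)\ge \chi_2(G')$ for every spanning subgraph $G'$, and Lemma~\ref{DeleteVertex-lem} applied to $v_1$, which gives $\chi_2(G)\ge \chi_2(H)+1$.

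For $n_4\le 1$, suppose $H$ has two vertices $u,w$ of degree $3$. If their closed neighbourhoods $N_H[u],N_H[w]$ are disjoint, then $H$ contains two vertex-disjoint copies of $K_{1,3}$; deleting all other edges and using Lemma~\ref{DeleteEdge-lem} gives $\chi_2(H)\ge \chi_2(2K_{1,3})=4$, whence $\chi_2(G)\ge \chi_2(H)+1\ge 5$, a contradiction. The delicate situations are the degenerate ones, where $u$ and $w$ are adjacent and/or share neighbours (for instance the double star, where $\chi_2(H)=3$), so the estimate through $H$ alone is too weak.

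For the degenerate configurations I keep $v_1$ in play. Set $S_0=\{v_1\}\cup N_H[u]\cup N_H[w]$ and let $B$ be the principal submatrix of $Q(G)$ indexed by $S_0$; Cauchy interlacing yields $\chi_2(G)\ge \rho_2(B)$. Each non-$v_1$ vertex of $S_0$ has a known minimum degree in $G$ (at least $2$, being joined to $v_1$ and to $u$ or $w$; shared vertices at least $3$; while $u,w$ have degree exactly $4$), so lowering the diagonal of $B$ to these minima produces $B'$ with $B\succeq B'$, hence $\rho_2(B)\ge \rho_2(B')$. Here $B'=\left(\begin{smallmatrix} n-1 & \mathbf 1^{\!\top}\\ \mathbf 1 & C\end{smallmatrix}\right)$ with $C$ a fixed small matrix depending only on the configuration, and the eigenvalues of $B'$ that are not eigenvalues of $C$ are the roots of the secular equation $n-1-\lambda=\mathbf 1^{\!\top}(C-\lambda I)^{-1}\mathbf 1$. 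A direct computation shows $\rho_1(C)>5$ in every configuration and that $\lambda=5$ solves this equation at one explicit value $n_0$ of $n$; since $B'$ is nondecreasing in the Loewner order as its $(v_1,v_1)$-entry $n-1$ grows, $\rho_2(B')$ is nondecreasing in $n$ and equals $5$ at $n=n_0$, so $\rho_2(B')\ge 5$ for all $n\ge n_0$. One checks that every $n_0$ satisfies $n_0\le 21$, so $\chi_2(G)\ge \rho_2(B')\ge 5$ for $n\ge 21$, the required contradiction. The same scheme settles the second assertion: if a degree-$3$ vertex $u$ of $H$ had two degree-$2$ neighbours $a,b$, one takes $S_0=\{v_1,u,a,b\}$ together with the third neighbour of $u$ and the second neighbours of $a$ and $b$, forms the corresponding $B\succeq B'$, and verifies $\rho_2(B')\ge 5$ once $n\ge 21$ in the identical manner.

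The main obstacle is precisely the passage beyond the disjoint-star case. The naive reduction to $H$ gives only $\chi_2(G)\ge \chi_2(H)+1$, and for the degenerate trees $\chi_2(H)$ can be as small as $3$, so it never reaches $5$; one is forced to retain the universal vertex, whose diagonal entry $n-1$ is what lifts the second eigenvalue of the local submatrix up to $5$. Thus the real work is twofold: enumerating the finitely many local configurations of $u,w$ (adjacent or not, with $0,1,2$ or $3$ common neighbours) and of $u,a,b$, and, for each, writing down the fixed matrix $C$, confirming $\rho_1(C)>5$, and solving the secular equation to check that the threshold $n_0$ never exceeds $21$. The bound $n\ge 21$ in the statement is exactly what is needed to clear the largest of these thresholds, and the monotonicity of $\rho_2(B')$ in $n$ is what converts ``$n$ large'' into the clean contradiction.
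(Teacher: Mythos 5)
You should first note that this paper does not prove Lemma~\ref{n4-lem} at all: it is imported verbatim from \cite{Ye2024LAA}, so there is no in-paper proof to compare against, and your attempt must stand on its own. Judged that way, it is a reduction rather than a proof. The disjoint case is correct and clean: $\chi_2(2K_{1,3}\cup kK_1)=4$ together with Lemmas~\ref{DeleteEdge-lem} and~\ref{DeleteVertex-lem} forces $\chi_2(G)\ge 5$, with no condition on $n$. But for every degenerate configuration --- which is where all the difficulty sits --- you defer the decisive step: ``a direct computation shows $\rho_1(C)>5$'' and ``one checks that every $n_0\le 21$'' are asserted, never performed. Since the entire content of the lemma is the explicit threshold $21$, reducing it to an unperformed finite computation leaves a genuine gap exactly where the lemma lives. (The scheme does appear to go through: spot-checking the two sparsest configurations --- the spider with legs $2,2,1$ coming from a $4$-vertex with two $3$-neighbours, and two $4$-vertices whose stars share exactly one leaf --- via equitable quotients puts both thresholds at $n=17$, and every denser configuration kicks in earlier; but this verification, over the full list of configurations, is precisely what your write-up omits.)

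There is also a soundness issue in the Loewner step. You obtain $B'$ from $B$ by ``lowering the diagonal,'' keeping the off-diagonal pattern of the minimal configuration; this tacitly assumes the only edges of $G$ inside $S_0$ are the configuration edges. If, say, leaves $a_1$ and $b_1$ of the two stars happen to be adjacent in $G$, then $B$ has off-diagonal $1$'s that $B'$ lacks, and $B-B'$ is no longer a nonnegative diagonal matrix (a symmetric pair of off-diagonal $1$'s alone is indefinite). The inequality $B\succeq B'$ is still true, but only because each extra edge inside $S_0$ also raises both endpoint degrees, contributing the positive semidefinite block $\bigl(\begin{smallmatrix}1&1\\1&1\end{smallmatrix}\bigr)$; this must be said, or such edges must be absorbed into your enumeration, which as written records only the $u$--$w$ adjacency and common-neighbour count. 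The cleanest repair, and the one matching the toolkit this paper actually uses (compare Lemma~\ref{leq5-lem-2}), is to drop the submatrix machinery: apply Lemma~\ref{DeleteEdge-lem} to pass to the spanning subgraph $K_1\vee(H_0\cup kK_1)$, where $H_0$ is the minimal configuration, and compute $\chi_2$ of that graph exactly via an equitable quotient; the dependence on $n$ then enters through $k$, and your monotonicity argument becomes superfluous.
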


\begin{lemma}[\rm\cite{Ye2025DAM}]\label{leq5-lem}  Let $G$  be a graph  of order $n$ such that $\chi_2\le 5$,   $d_1=n-1$ and  $d_2\le 4$.
	\begin{itemize}
       \item[(i)] If $n\ge 14$, then every two $4$-vertices (if any) are non-adjacent;
		\item[(ii)] If $n\ge 22$,  then every $4$-vertex is adjacent to at most one $3$-vertex;
		\item[(iii)] If $n\ge 22$, then any pair of $4$-vertices $v$ and $w$ (if any) satisfies $N_G(v)\cap N_G(w)=\{v_1\}$.
	\end{itemize}
\end{lemma}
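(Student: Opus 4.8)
The three assertions are all of the form ``a prescribed local configuration cannot occur,'' and I would attack each one by exhibiting a small principal submatrix of $Q(G)$ whose second largest eigenvalue exceeds $5$, contradicting the hypothesis $\chi_2(G)\le 5$. The bridge is eigenvalue interlacing: if $M=Q(G)[S]$ denotes the principal submatrix of $Q(G)$ indexed by a vertex set $S$, then by the Cauchy interlacing theorem $\rho_2(M)\le \chi_2(G)\le 5$. The point to keep in mind throughout is that the diagonal of $M$ records the \emph{full} degrees of the vertices of $S$ in $G$, not their degrees inside $G[S]$; in particular, since $d_1=n-1$, the unique maximum-degree vertex $v_1$ is adjacent to every other vertex, so I always put $v_1\in S$, where it contributes a diagonal entry $n-1$ and a row and column of ones. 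It is exactly this entry that makes $\rho_2(M)$ grow with $n$ and that produces the stated order thresholds.

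For part (i), suppose to the contrary that $v$ and $w$ are adjacent $4$-vertices. Both are adjacent to $v_1$, so I take $S=\{v_1,v,w\}$ together with the remaining two neighbours of each of $v$ and $w$; generically this is a set of seven vertices. The map $v\leftrightarrow w$, combined with the permutations of the peripheral neighbours, is an automorphism of $M$, so the corresponding orbit partition is equitable and $M$ splits into invariant blocks. Assuming the four peripheral neighbours have the smallest possible degree $2$ and are otherwise independent, the fully symmetric block is the quotient matrix
\begin{equation*}
B=\begin{pmatrix} n-1 & 2 & 4\\ 1 & 5 & 2\\ 1 & 1 & 2\end{pmatrix},
\end{equation*}
on the classes $\{v_1\}$, $\{v,w\}$ and $\{$the four neighbours$\}$, while the complementary blocks contribute only eigenvalues in $\{1,2,4\}$. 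Evaluating the characteristic polynomial of $B$ at $\lambda=5$ gives $\det(B-5I)=-2n+26$, which is negative precisely when $n\ge 14$. Since $B$ has its largest root near $n$, a negative value at $5$ forces a second root strictly above $5$; by Lemma~\ref{equitable} this root is a genuine eigenvalue of $M$, and it exceeds every eigenvalue coming from the other blocks, so $\rho_2(M)>5$, contradicting $\rho_2(M)\le 5$.

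Parts (ii) and (iii) follow the same template with larger sets $S$. For (ii) I assume a $4$-vertex $v$ adjacent to two $3$-vertices $u_1,u_2$, and I take $S$ to consist of $v_1,v,u_1,u_2$, the fourth neighbour of $v$, and the third neighbours $c_1,c_2$ of $u_1,u_2$; the symmetry $(u_1,c_1)\leftrightarrow(u_2,c_2)$ reduces the symmetric sector to a $5\times 5$ quotient matrix. For (iii), part (i) already guarantees that two $4$-vertices $v,w$ are non-adjacent, so I assume instead that they share a neighbour $c\neq v_1$ and take $S$ to be $v_1,v,w,c$ together with the remaining two neighbours of each of $v$ and $w$; the symmetry $v\leftrightarrow w$ reduces the symmetric sector to a $4\times 4$ quotient matrix. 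In each case the characteristic polynomial of the quotient matrix evaluated at $\lambda=5$ is linear in $n$, and its sign forces a second eigenvalue above $5$ once $n$ is large enough; tracking the constants through all configurations yields the uniform bound $n\ge 22$.

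The routine part here is the determinant evaluation; the genuine obstacle is the case analysis suppressed by the word ``generically.'' The peripheral neighbours used above may have degree larger than the minimum, may coincide across the two symmetric branches, or may merge with one another, and each possibility alters $M$. Degree increases and the insertion of edges among distinct peripheral vertices are positive semidefinite perturbations of the minimal model—raising a diagonal entry adds a nonnegative diagonal matrix, while adding an edge $xy$ adds the rank-one positive semidefinite matrix supported on $\{x,y\}$ with all four entries equal to one—so by Weyl's inequalities they can only increase $\rho_2(M)$ and hence only strengthen the contradiction. The delicate cases are the genuine coincidences that shrink $|S|$ and change the symmetry type of $M$; these I would handle by the same orbit-partition computation on the reduced configuration, checking in each instance that $\det(B-5I)$ has the sign forcing $\rho_2(M)>5$ throughout the claimed range. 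Confirming that no degenerate configuration escapes this comparison, so that the uniform thresholds $14$, $22$ and $22$ all hold, is where most of the effort lies.
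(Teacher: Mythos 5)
First, a point of reference: the paper does not prove this lemma at all --- it is imported verbatim from \cite{Ye2025DAM} --- so there is no internal proof to compare your argument against; I can only judge it on its own merits and against the technique this group of authors visibly uses for analogous statements (e.g.\ Lemma~\ref{leq5-lem-2}). Your mechanism is sound. Cauchy interlacing does give $\rho_2(Q(G)[S])\le\chi_2(G)$; the diagonal of a principal submatrix of $Q(G)$ does carry the full $G$-degrees, so $v_1$ contributes the entry $n-1$; your part (i) computation is correct (the stated partition is equitable for the minimal model, $\det(5I-B)=2n-26>0$ together with $\rho_1(B)\ge n-1$ forces $\rho_2(B)>5$ exactly for $n\ge 14$); and the Weyl/PSD argument correctly absorbs larger peripheral degrees and extra edges between \emph{distinct} chosen vertices. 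This is a genuinely different route from the standard one in this line of work, which deletes edges via Lemma~\ref{DeleteEdge-lem} to pass to a spanning subgraph $K_1\vee(H_0\cup rK_1)$ containing the forbidden configuration and then computes $\chi_2$ of that whole graph by an equitable quotient; your vertex-interlacing variant keeps everything inside a bounded-size matrix and leads to essentially the same determinants.

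The gap is that for parts (ii) and (iii) you compute nothing: no quotient matrix, no determinant, and no verification that any threshold, let alone $22$, actually emerges; likewise the coincidence cases of all three parts are deferred en bloc. Since the lemma is entirely quantitative --- the thresholds \emph{are} the statement --- this is not a finishing detail but the body of the proof. For what it is worth, the deferred computations do succeed: for the generic configuration of (ii), with classes $\{v_1\},\{v\},\{u_1,u_2\},\{x\},\{c_1,c_2\}$ (where $x$ is the fourth neighbour of $v$), one gets $\det(5I-B)=8n-152>0$ for $n\ge 20$; for the generic configuration of (iii), with classes $\{v_1\},\{v,w\},\{c\},\{a_1,a_2,b_1,b_2\}$, one gets $\det(5I-B)=4n-73>0$ for $n\ge 19$; and the coincidence configurations (shared peripheral neighbours, adjacent $3$-vertices, a common third neighbour, etc.) yield even smaller thresholds, so your plan does prove the lemma with room to spare below $22$ --- but those finitely many checks must be exhibited for a proof to exist. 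One small repair is also needed: Lemma~\ref{equitable} only asserts that eigenvalues of the quotient $B$ are eigenvalues of $M$, so to pass from ``$B$ has two roots above $5$'' to $\rho_2(M)>5$ you should note that $\rho_1(B)\neq\rho_2(B)$; this is immediate because the eigenvalues of $B$ are non-negative (they are eigenvalues of the positive semidefinite $M$) and sum to $\operatorname{tr}(B)$, which is $n$ plus a constant, while $\rho_1(B)\ge n-1$, so $\rho_2(B)$ is bounded by a constant.
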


Let $Z_n$ denote a tree with $n\ge 4$ vertices obtained by duplicating an endvertex of the path $P_{n-1}$.

\begin{lemma}[\cite{Ye2024LAA,Ye2025DAM}]\label{leq5-lem-2}  For a graph $G$, if $\chi_2\le 5$, then $G$ does not contain $K_1\vee (Z_6\cup 15K_1)$ as a subgraph.  Moreover, if $\chi_2<5$, then $G$ does not contain any of $K_1\vee (Z_6\cup   14K_1)$,  or $K_1\vee (K_{1,3}\cup C_r)$ $(r\ge 3)$, or $K_1\vee (C_{r_1}\cup C_{r_2})$ $(r_1,r_2\ge 3)$ as a subgraph.
\end{lemma}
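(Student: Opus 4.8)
The plan is to reduce the whole statement to a single monotonicity principle together with a lower bound on the second largest $Q$-eigenvalue $\chi_2$ of each forbidden graph. First I would record the reduction: if $H$ is a (not necessarily induced) subgraph of $G$, then $\chi_2(G) \ge \chi_2(H)$. To see this, enlarge $H$ to a spanning subgraph $H^{+}$ of $G$ by adding the vertices of $V(G)\setminus V(H)$ as isolated vertices; since every $Q$-eigenvalue is nonnegative, these isolated vertices only append zeros at the bottom of the spectrum, so $\chi_i(H^{+}) = \chi_i(H)$ for all $i\le n(H)$. As $G$ arises from $H^{+}$ by inserting edges, repeated use of Lemma~\ref{DeleteEdge-lem} gives $\chi_i(G)\ge \chi_i(H^{+})$, whence $\chi_2(G)\ge \chi_2(H)$. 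It therefore suffices to prove $\chi_2 > 5$ for $K_1\vee(Z_6\cup 15K_1)$ and $\chi_2 \ge 5$ for each of $K_1\vee(Z_6\cup 14K_1)$, $K_1\vee(K_{1,3}\cup C_r)$ and $K_1\vee(C_{r_1}\cup C_{r_2})$; the asserted forbidden-subgraph conclusions then follow by contradiction with the hypotheses $\chi_2(G)\le 5$ and $\chi_2(G)<5$, respectively.

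For the two infinite families I would exploit that the apex of a cone $K_1\vee F$ on $n$ vertices has degree $n-1$, so Lemma~\ref{DeleteVertex-lem} with $i=2$ yields the uniform bound $\chi_2(K_1\vee F)\ge \chi_2(F)+1$. It then remains to verify $\chi_2(F)\ge 4$, which holds because in both families $F$ splits into two components each with largest $Q$-eigenvalue equal to $4$: a cycle satisfies $\chi_1(C_r)=4$ (as $Q(C_r)=2I+A(C_r)$ has top adjacency eigenvalue $2$), and $\chi_1(K_{1,3})=4$. Hence $4$ is a $Q$-eigenvalue of $F$ of multiplicity at least two, so $\chi_2(F)\ge 4$ and $\chi_2(K_1\vee F)\ge 5$, uniformly in $r,r_1,r_2\ge 3$.

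The delicate case, and the main obstacle, is the pair $K_1\vee(Z_6\cup sK_1)$ with $s\in\{14,15\}$: here the estimate $\chi_2\ge \chi_2(Z_6)+1$ is independent of $s$ and too weak (one computes $\chi_2(Z_6)=3<4$), so the number of isolated vertices must enter. I would apply Lemma~\ref{equitable} to the equitable partition whose cells are the apex, the $s$ isolated vertices, the two twin leaves of $Z_6$, and the remaining four vertices of $Z_6$ taken singly; the eigenvalues of the resulting $7\times 7$ quotient matrix $M=M(s)$ are then genuine $Q$-eigenvalues of the cone, and they include $\chi_1$, which exceeds $5$ since the cone contains a spanning star $K_{1,s+6}$ and hence $\chi_1\ge s+7$. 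The eigenvalues not captured by $M$ come from the two twin classes and equal $1$ (with multiplicity $s-1$) and $2$; all are below $5$. Consequently the number of $Q$-eigenvalues of the cone exceeding $5$ equals the number of eigenvalues of $M$ exceeding $5$, and $\chi_2>5$ exactly when $M$ has at least two eigenvalues above $5$.

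The crux is then a single parametric determinant. Exploiting that the isolated-vertex row and the twin-leaf row of $M(s)$ are sparse and that the $Z_6$-path block is tridiagonal, I expect the evaluation to collapse to the clean factorization
\begin{equation*}
\det\bigl(M(s)-5I\bigr) = 40\,(14-s).
\end{equation*}
At $s=14$ this vanishes, so $5$ is a $Q$-eigenvalue of the cone; combined with $\chi_1>5$ this forces $\chi_2\ge 5$. At $s=15$ it is negative, whereas in $\det(M-5I)=\prod_i(\rho_i(M)-5)$ the factor $(\rho_1(M)-5)$ is positive; hence an odd number of the remaining six factors is positive, so $M$ has a second eigenvalue above $5$ and $\chi_2>5$. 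The principal effort lies in carrying out this $7\times 7$ determinant with the parameter $s$ correctly; once the factorization $40(14-s)$ is in hand, the rest is interlacing bookkeeping.
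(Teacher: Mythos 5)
Your proposal is correct, but there is nothing in this paper to compare it against: Lemma~\ref{leq5-lem-2} is imported by citation from \cite{Ye2024LAA,Ye2025DAM}, and no proof of it appears in the present text. Your argument therefore stands as a self-contained alternative, and it checks out. The subgraph-monotonicity reduction is sound (isolated vertices only append zeros to the $Q$-spectrum, and then Lemma~\ref{DeleteEdge-lem} applies edge by edge), and for the two cycle families the chain $\chi_2(K_1\vee F)\ge \chi_2(F)+1\ge 5$, via Lemma~\ref{DeleteVertex-lem} together with $\chi_1(C_r)=4$ and $\chi_1(K_{1,3})=4$ appearing in two distinct components, is exactly right. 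The crux you left as an ``expected'' factorization is in fact true. One can see it with little computation: since $s$ enters $M(s)-5I$ only through the first row, $\det(M(s)-5I)$ is affine in $s$; its slope is the determinant obtained by replacing the first row with $(1,1,0,\ldots,0)$, which (because rows $1$ and $2$ are supported on the first two columns) factors as $\det\bigl(\begin{smallmatrix}1&1\\ 1&-4\end{smallmatrix}\bigr)$ times the $5\times 5$ block on the $Z_6$-cells, i.e.\ $(-5)\cdot 8=-40$; and $s=14$ is the root because the cell-constant vector $(4,1,-9,-31,-17,-7,-1)$ (ordered: apex, $sK_1$-class, twin leaves, then the path $c,d,e,f$ of $Z_6$) satisfies every equation of $M(s)x=5x$ except the apex one, which reduces to $5s-50=20$, i.e.\ $s=14$. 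Hence $\det(M(s)-5I)=40(14-s)$, and your sign/interlacing bookkeeping at $s=14$ and $s=15$ goes through, using $\rho_1(M)=\chi_1\ge s+7$.

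One step deserves to be made explicit in a final write-up: the claim that the number of $Q$-eigenvalues of the cone exceeding $5$ equals the number of eigenvalues of $M(s)$ exceeding $5$. Lemma~\ref{equitable} as stated only guarantees that each eigenvalue of $M$ is an eigenvalue of $Q$ and that the largest ones coincide; it says nothing about multiplicities. What justifies your claim is the decomposition you yourself set up: the cell-constant subspace is $Q$-invariant and $Q$ restricted to it is similar to $M(s)$, while its orthogonal complement is exhausted by your $s-1$ twin eigenvectors for the eigenvalue $1$ and one for the eigenvalue $2$ (dimension count $7+(s-1)+1=n$). This shows the $Q$-spectrum is exactly the multiset union of the spectrum of $M(s)$ with $\{1^{(s-1)},2\}$, which is the statement your argument actually uses.
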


\begin{lemma}[\cite{Ye2024LAA}]\label{23l}  Let $G$ be a connected  graph of order $n$ with $d_2\leq 4$. If either $d_1\ge 11>1=d_n$   or $d_1\ge 8>d_n\ge 2$, then $\chi_1\le d_1+3$.
\end{lemma}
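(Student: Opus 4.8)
The plan is to work directly with the Perron eigenvector of $Q(G)$ and to bound $\chi_1$ through a careful control of the second neighbourhood of the vertex of maximum degree. Since $G$ is connected, $Q(G)$ is nonnegative and irreducible, so by Perron--Frobenius there is a positive eigenvector $x$ for $\chi_1$; normalise it so that $\max_v x_v = 1$, attained at some vertex $v^{*}$. Because $d_2 \le 4 < d_1$, there is a unique vertex $v_1$ with $d(v_1) = d_1 > 4$, while every other vertex has degree at most $4$. If the maximum coordinate is attained off-centre, i.e.\ $v^{*} \ne v_1$, then the eigen-equation at $v^{*}$ gives $(\chi_1 - d(v^{*}))x_{v^{*}} = \sum_{w\sim v^{*}} x_w \le d(v^{*})x_{v^{*}}$, hence $\chi_1 \le 2d(v^{*}) \le 8$; since $d_1 \ge 8$ forces $d_1 + 3 \ge 11 > 8$, the claim holds here. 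So I would reduce to the situation $x_{v_1} = 1 \ge x_v$ for all $v$, where the equation at $v_1$ reads
\[
\chi_1 - d_1 = \sum_{u \in N_G(v_1)} x_u ,
\]
and it remains to prove that this sum is at most $3$.

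The core step is a self-consistent estimate on the eigenvector. Write $S = N_G(v_1)$ and $T = V(G)\setminus(S\cup\{v_1\})$, and set $\beta = \max_{u\in S}x_u$ and $\gamma = \max_{w\in T}x_w$. For $w \in T$ all neighbours lie in $S\cup T$, so $(\chi_1 - d(w))x_w \le 4\max(\beta,\gamma)$, and since $\chi_1 \ge d_1 + 1 \ge 9$ (by repeated use of Lemma~\ref{DeleteEdge-lem} applied to the star $K_{1,d_1}\subseteq G$, for which $\chi_1 = d_1+1$) we have $4/(\chi_1 - 4) < 1$; maximising over $T$ then rules out $\gamma > \beta$ and yields $\gamma \le \beta$. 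For a vertex $u^{*} \in S$ realising $\beta$, its neighbours are $v_1$ (coordinate $1$) together with $d(u^{*})-1 \le 3$ further vertices of $S\cup T$, each of coordinate $\le \beta$; hence $(\chi_1 - d(u^{*}))\beta \le 1 + 3\beta$, and using $d(u^{*})\le 4$ this rearranges to $\beta(\chi_1 - 7) \le 1$. Summing over $S$ gives $\chi_1 - d_1 = \sum_{u\in S}x_u \le d_1\beta \le d_1/(\chi_1 - 7)$, that is,
\[
(\chi_1 - d_1)(\chi_1 - 7) \le d_1 .
\]

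Finally I would turn this inequality into the stated bound by elementary analysis of $h(\chi) = (\chi - d_1)(\chi - 7) - d_1$. One computes $h(d_1 + 3) = 2d_1 - 12 > 0$ for $d_1 \ge 8$, while $h'(\chi) = 2\chi - 7 - d_1 > 0$ on $[d_1 + 3,\infty)$, so $h$ is strictly increasing there; thus $\chi_1 > d_1 + 3$ would force $h(\chi_1) > 0$, contradicting $(\chi_1 - d_1)(\chi_1 - 7) \le d_1$. Hence $\chi_1 \le d_1 + 3$. This single argument covers both hypotheses of the lemma at once: the off-centre case needs only $d_1 + 3 > 8$, and the self-consistency needs only $\chi_1 > 8$, both guaranteed by $d_1 \ge 8$ (a fortiori by $d_1 \ge 11$), so the conditions on $d_n$ are comfortably absorbed.

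I expect the decisive difficulty to be the second-neighbourhood estimate rather than the bookkeeping. The naive bound $x_u \le d(u)/(\chi_1 - d(u))$, using only $x_w \le 1$, gives merely $\chi_1 \le d_1 + 4$; it is precisely the passage from the crude bound $x_w \le 1$ to $x_w \le \beta$ on the \emph{proper} neighbours of each $u \in S$---together with ruling out $\gamma > \beta$ so that the set $T$ cannot spoil the estimate---that sharpens $d_1 + 4$ to the required $d_1 + 3$. The degree input $d_2 \le 4$ is used twice in this step (once in the factor $4\max(\beta,\gamma)$ for $T$, once in $d(u^{*})-1\le 3$ for $S$), and keeping these estimates tight enough to reach the constant $3$ rather than $4$ is the heart of the proof.
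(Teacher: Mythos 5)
Your proof is correct. A preliminary remark: this lemma is not proved in the present paper at all --- it is imported from \cite{Ye2024LAA} --- so there is no in-paper argument to compare against, and your proposal must be judged on its own. It holds up. The off-centre case is handled correctly (indeed it is vacuous, since edge-deletion via Lemma~\ref{DeleteEdge-lem} down to the star $K_{1,d_1}$ gives $\chi_1\ge d_1+1\ge 9>8$); the exclusion of $\gamma>\beta$ is forced by $\chi_1>8$; the estimate $(\chi_1-7)\beta\le 1$ at a vertex of $S$ realising $\beta$ uses $d_2\le 4$ exactly as claimed; summing the eigen-equation at $v_1$ over $S$ yields $(\chi_1-d_1)(\chi_1-7)\le d_1$; and the monotonicity analysis of $h(\chi)=(\chi-d_1)(\chi-7)-d_1$, with $h(d_1+3)=2d_1-12>0$ for $d_1\ge 8$, closes the argument. (Two trivial presentational points: when $T=\emptyset$ the quantity $\gamma$ should be declared as $0$, and one should say a word about why the multiplication by $\chi_1-7>0$ is legitimate; neither is a gap.)

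What your route buys is noteworthy: you never use the hypotheses on $d_n$, so you actually prove the stronger statement that connectedness, $d_2\le 4$ and $d_1\ge 8$ alone imply $\chi_1\le d_1+3$, which subsumes both alternatives ($d_1\ge 11>1=d_n$ and $d_1\ge 8>d_n\ge 2$) of the lemma as stated; the case split is evidently an artifact of the technique in \cite{Ye2024LAA}, not of the truth of the result. Moreover, your intermediate inequality $(\chi_1-d_1)(\chi_1-7)\le d_1$ is sharp: for $K_1\vee H$ with $H$ cubic, the partition $\{v_1\}\cup V(H)$ is equitable, the Perron vector is constant on $V(H)$, and every estimate in your chain becomes an equality, so $\chi_1$ is exactly the largest root of $(\rho-d_1)(\rho-7)=d_1$ (for instance $\chi_1=\tfrac{15+\sqrt{33}}{2}\approx 10.37$ when $d_1=8$). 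This shows both that your quadratic bound cannot be improved by this method and that it still leaves comfortable room under $d_1+3$.
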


\section{$Q$-eigenvalues of certain cones and  proof of Theorem \ref{13t}}\label{sec3}

This section is concerned with the $Q$-eigenvalues of  $$K_1\vee (C_{k_1}\cup C_{k_2}\cup\cdots \cup C_{k_t}\cup qK_2\cup sK_1)\quad \text{and} \quad K_1\vee (K_{1,3}\cup C_{k_1}\cup C_{k_2}\cup\cdots \cup C_{k_{t-1}}\cup qK_2\cup (s-1)K_1).$$
In the former graph, we permit the possibility that certain cycles degenerate into digons, which are addressed in a separate statement.


We begin with two auxiliary results. The first concerns the multiplicity of 1 in the $Q$-spectrum of a graph with pendant vertices sharing a common neighbourhood.

\begin{lemma}[\cite{Ye2025DAM}]\label{eigenvalue1-lem} Let $G$ be a  graph of order $n~(n\ge 2)$ and $V(G)=\{w_i~:~ 1\le i \le n\}$. If $N_G(w_1)=N_G(w_2)=\cdots =N_G(w_r)=\{w_{r+1}\}$ where $2\le r+1\le n$, then $m_{G}(1)\ge r-1$ and ${\boldsymbol x_j}=(x_{j}(w_1),x_{j}(w_2),\ldots,x_{j}(w_n))^{\intercal}$, $1\leq j\leq r-1$, are $r-1$ linearly independent eigenvectors of $Q(G)$ corresponding to eigenvalue $1$, where $x_{j}(w_j)=1$, $x_{j}(w_{j+1})=-1$ and $x_{j}(w_i)=0$ for $i\notin \{j,j+1\}$.
\end{lemma}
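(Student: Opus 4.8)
The plan is to verify directly that each proposed vector $\boldsymbol x_j$ is an eigenvector of $Q(G)=D(G)+A(G)$ for the eigenvalue $1$, and then to argue that the $r-1$ such vectors are linearly independent, which immediately yields $m_G(1)\ge r-1$. Since $Q(G)=D(G)+A(G)$, the eigenvalue equation $Q(G)\boldsymbol x=\boldsymbol x$ reads, coordinate by coordinate,
$$(d_G(v)-1)\,x(v)+\sum_{u\in N_G(v)}x(u)=0\qquad\text{for every }v\in V(G).$$
I would fix $j\in\{1,\ldots,r-1\}$ and check this identity at each vertex, exploiting that $\boldsymbol x_j$ is supported only on $\{w_j,w_{j+1}\}$ (both pendant, with common neighbour $w_{r+1}$) and that $j+1\le r$, so that $w_{r+1}$ itself is not in the support.

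For the verification I would distinguish the relevant vertex types. At the pendant vertices $w_j$ and $w_{j+1}$ the factor $d_G(v)-1$ vanishes, and their unique neighbour is $w_{r+1}$, where $x_j(w_{r+1})=0$; hence the equation holds there. At the common neighbour $w_{r+1}$ we have $x_j(w_{r+1})=0$, and the neighbour-sum collects precisely the two nonzero entries $x_j(w_j)=1$ and $x_j(w_{j+1})=-1$, while every other neighbour of $w_{r+1}$ (including any vertex outside $\{w_1,\ldots,w_r\}$) contributes $0$, so the sum cancels to $0$. At any remaining vertex $v$ one has $x_j(v)=0$, and since $w_j,w_{j+1}$ are pendant their only neighbour is $w_{r+1}\neq v$, so $v$ is adjacent to neither and the neighbour-sum is again $0$. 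This confirms $Q(G)\boldsymbol x_j=\boldsymbol x_j$ for each $j$.

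For linear independence I would restrict attention to the coordinates $w_1,\ldots,w_r$, on which $\boldsymbol x_1,\ldots,\boldsymbol x_{r-1}$ form a lower bidiagonal array with $1$'s on the main diagonal and $-1$'s just below it. Concretely, in any vanishing linear combination $\sum_j c_j\boldsymbol x_j=0$, reading off coordinate $w_1$ (where only $\boldsymbol x_1$ is nonzero) forces $c_1=0$, and then coordinates $w_2,w_3,\ldots$ force $c_2,c_3,\ldots$ to vanish one at a time. Hence the $r-1$ eigenvectors are independent and $m_G(1)\ge r-1$.

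I do not expect any genuine obstacle here: the statement is a direct computation. The only point demanding slight care is the vertex $w_{r+1}$, where the two contributions of opposite sign must cancel exactly. This is precisely why the eigenvectors are taken to be the pairwise \emph{differences} $\boldsymbol x_j$ rather than the individual pendant indicator vectors, since the latter fail to be eigenvectors on their own because of the coupling through $w_{r+1}$.
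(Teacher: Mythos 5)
Your proof is correct. The paper itself offers no proof of this lemma---it is imported from~\cite{Ye2025DAM}---but your argument (coordinate-by-coordinate verification of $(d_G(v)-1)x(v)+\sum_{u\in N_G(v)}x(u)=0$, with the cancellation at the common neighbour $w_{r+1}$, followed by independence via the bidiagonal pattern on coordinates $w_1,\ldots,w_r$) is exactly the standard direct check, the same technique the paper uses to verify the analogous eigenvector constructions in Lemmas~\ref{eigenvalue1-3-lem} and~\ref{SQG-lem}.
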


The \textit{coalescence graph}, denoted by $G(u)\odot F(v)$,  is obtained from $G$ and $F$  by merging $u$ ($u\in V(G)$) and $v$ ($v\in V(F)$). The next lemma tells us about the multiplicities of $1$ and $3$ in the $Q$-spectrum of a special coalescence.

\begin{lemma}\label{eigenvalue1-3-lem} Let $G$ be a graph with vertex $v$, $H\cong K_{1}\vee rK_2$ with vertex $u$ of degree $2r$ $(r\ge 1)$ and $F\cong G(v)\odot H(u)$. Then $m_F(1)\ge r$ and $m_F(3)\ge r-1$.
\end{lemma}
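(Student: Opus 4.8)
The plan is to prove both multiplicity bounds constructively, by exhibiting explicit eigenvectors of $Q(F)=D(F)+A(F)$ that are supported entirely on the ``friendship part'' $H$. The guiding idea is that, after $u$ is identified with $v$, the two degree-$2$ vertices of each triangle of $H$ act as local twins, and twin vertices are a classical source of eigenvectors. Write $w$ for the vertex of $F$ obtained by merging $v\in V(G)$ with $u\in V(H)$, and for $1\le i\le r$ let $a_i,b_i$ denote the two non-central vertices of the $i$th copy of $K_2$ in $H$. In $F$ each of $a_i,b_i$ still has degree $2$, with $N_F(a_i)=\{w,b_i\}$ and $N_F(b_i)=\{w,a_i\}$; in particular $a_i$ and $b_i$ share the same closed neighbourhood, and none of the remaining vertices of $G$ is adjacent to any $a_j,b_j$.

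First I would establish $m_F(1)\ge r$. For each $i$ consider the vector $\boldsymbol{x}_i$ that assigns $1$ to $a_i$, $-1$ to $b_i$, and $0$ to every other vertex. Checking $Q(F)$ coordinate by coordinate gives $(Q(F)\boldsymbol{x}_i)(a_i)=2\cdot 1+(x(w)+x(b_i))=2-1=1$ and $(Q(F)\boldsymbol{x}_i)(b_i)=2\cdot(-1)+(x(w)+x(a_i))=-2+1=-1$, while at $w$ the two contributions $+1$ and $-1$ cancel, and every remaining coordinate is $0$. Hence $\boldsymbol{x}_i$ is an eigenvector for the eigenvalue $1$. Since $\boldsymbol{x}_1,\dots,\boldsymbol{x}_r$ have pairwise disjoint supports they are linearly independent, so $m_F(1)\ge r$.

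Next, for $m_F(3)\ge r-1$ I would use the combinations that are symmetric within each triangle. Given a tuple $(c_1,\dots,c_r)$, let $\boldsymbol{y}$ assign $c_i$ to both $a_i$ and $b_i$, and $0$ to $w$ and to every other vertex of $G$. Then $(Q(F)\boldsymbol{y})(a_i)=2c_i+(c_i+0)=3c_i$, and the same at $b_i$, so on the triangle vertices $\boldsymbol{y}$ already behaves like an eigenvector for $3$. The only equations that could fail are those at $w$ and at the neighbours of $w$ in $G$: since $\boldsymbol{y}$ vanishes on $V(G)\setminus\{w\}$, each such neighbour yields $0$, whereas at $w$ one computes $(Q(F)\boldsymbol{y})(w)=2\sum_{i=1}^{r}c_i$. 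Imposing $\sum_{i=1}^{r}c_i=0$ therefore turns $\boldsymbol{y}$ into a genuine eigenvector for the eigenvalue $3$. The map $(c_i)\mapsto\boldsymbol{y}$ is linear and injective, and the space $\{(c_i):\sum_i c_i=0\}$ has dimension $r-1$, so the choices $c_i=1$, $c_{i+1}=-1$ (for $1\le i\le r-1$) produce $r-1$ linearly independent eigenvectors, giving $m_F(3)\ge r-1$.

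Because the two families lie in the eigenspaces for $1$ and for $3$ respectively, no comparison between them is required, and each count stands on its own. The only genuinely delicate point is the coalescence vertex $w$, whose degree is inflated by $2r$: one must arrange the vectors so that nothing ``leaks'' into $G$ through $w$. For the antisymmetric vectors this cancellation is automatic, whereas for the symmetric vectors it is precisely the balancing condition $\sum_i c_i=0$ that annihilates the contribution at $w$ — and this is exactly what costs one dimension and accounts for the $r-1$ (rather than $r$) in the second bound. Everything else reduces to the row-by-row verification sketched above, so I do not anticipate any serious obstacle.
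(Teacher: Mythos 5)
Your proof is correct and follows essentially the same route as the paper: the antisymmetric vectors $(1,-1)$ on each pair give the $r$ eigenvectors for the eigenvalue $1$, and the pairwise-symmetric vectors with coefficients summing to zero (your basis $c_i=1$, $c_{i+1}=-1$ is exactly the paper's $\boldsymbol{\beta}_i$) give the $r-1$ eigenvectors for the eigenvalue $3$. The only difference is that you carry out the coordinate-by-coordinate verification that the paper leaves to the reader, including the cancellation at the coalescence vertex.
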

\begin{proof}
Suppose $V(F)=\{w_i~:~ 1\le i\le n\}$, where $d_F(w_i)=2$ for $1\le i \le 2r$ and $w_{2j}w_{2j-1}\in E(F)$ for $1\leq j\leq r$.
 We first deal with the eigenvalue 1. Let $${\boldsymbol \alpha_1}=\big(1,-1,0^{(n-2)}\big)^\intercal,  {\boldsymbol \alpha_2}=\big(0,0,1,-1,0^{(n-4)}\big)^\intercal, \ldots, {\boldsymbol \alpha_r}=\big(0^{(2r-2)},1,-1,0^{(n-2r)}\big)^\intercal.$$
We can easily check that this comprises a set of linearly independent eigenvectors associated with the eigenvalue $1$ of $Q(F)$. Thus, $m_{F}(1)\ge r$.

Next,  $${\boldsymbol \beta_1}=(1,1,-1,-1,0^{(n-4)})^\intercal,  {\boldsymbol \beta_2}=(0^{(2)},1,1,-1,-1,0^{(n-6)})^\intercal, \ldots, {\boldsymbol \beta_{r-1}}=(0^{(2r-4)},1,1,-1,-1,0^{(n-2r)})^{\intercal}.$$
are linearly independent eigenvectors of the same matrix corresponding to the eigenvalue $3$,  when $r\ge 2$.
\end{proof}

We now compute the $Q$-spectrum of a cone over cycles, edges and isolated vertices.

\begin{lemma}\label{SQG-lem}Let $G\cong K_1\vee (C_{k_1}\cup C_{k_2}\cup\cdots \cup C_{k_t}\cup qK_2\cup sK_1)$, $t,q,s\ge 1$ and $k_i\ge 3$ $(1\le i\le t)$. If $n$ is the order of $G$, then
	$$S_Q(G)=\left\{\rho_1,\rho_2,\rho_3,\rho_4, 5^{(t-1)}, 3^{(q-1)}, 1^{(s+q-1)}, 3+2\cos\frac{2j\pi}{k_i}~:~ 1\le j\le k_i-1, 1\le i\le t\right\},$$
	where $\rho_i$, $1\le i \le 4$, are the  roots of $\rho^4-(n+8)\rho^3+(8n+15)\rho^2+(4q+4s-19n+4)\rho+12n-4q-12s-12$. In addition, these particular roots satisfy $\rho_1>n>5>\rho_2>4>3>\rho_3>2>1>\rho_4>0$.
\end{lemma}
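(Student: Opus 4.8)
The plan is to diagonalize $Q(G)$ by exploiting the symmetry of the cone, splitting the eigenvectors according to their value at the apex $v_1$ (the unique vertex of degree $n-1$). Writing the eigenvalue equation $Q(G)\mathbf{x}=\rho\mathbf{x}$ at $v_1$ and at a base vertex $u$, and using $d_G(u)=d_F(u)+1$ for $F=C_{k_1}\cup\cdots\cup C_{k_t}\cup qK_2\cup sK_1$, the equation at $u$ rearranges to $\bigl(Q(F)\mathbf{x}_F\bigr)_u=(\rho-1)x_u-x(v_1)$. Hence the eigenvectors with $x(v_1)=0$ correspond exactly to eigenvectors of $Q(F)$ orthogonal to the all-ones vector, with eigenvalue shifted by $1$; I will produce these explicitly from the component structure and recover the remaining eigenvalues from an equitable partition.

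For the part with $x(v_1)=0$ I would exhibit $n-4$ linearly independent eigenvectors supported on the base: (a) for each cycle $C_{k_i}$, the $k_i-1$ standard $Q(C_{k_i})$-eigenvectors with index $j\neq 0$, extended by zero; since they have zero sum on their cycle they satisfy the apex equation and give $\rho=1+\bigl(2+2\cos\tfrac{2j\pi}{k_i}\bigr)=3+2\cos\tfrac{2j\pi}{k_i}$; (b) differences of the all-ones vectors of two cycles, giving $\rho=5$ with multiplicity $t-1$; (c) differences of all-ones vectors of two edges, giving $\rho=3$ with multiplicity $q-1$; (d) the alternating vectors $(1,-1)$ on each edge together with differences of the isolated-vertex indicators, all summing to zero, giving $\rho=1$ with multiplicity $q+(s-1)=s+q-1$ (Lemma~\ref{eigenvalue1-lem}, applied to the $s$ pendant vertices attached to $v_1$, confirms part of this). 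Each such vector is orthogonal to the indicators of the parts described below, and a short check gives linear independence; the count is $\sum_i(k_i-1)+(t-1)+(q-1)+(s+q-1)=(n-1)-3=n-4$.

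For the four remaining eigenvalues I would use the partition of $V(G)$ into the apex $\{v_1\}$, the cycle vertices $V_C$, the edge vertices $V_E$, and the isolated-origin vertices $V_I$. Each base vertex has constant degree within its part and a constant number of neighbours in every part, so the partition is equitable for $Q(G)$, with quotient matrix
\[
M=\begin{pmatrix} n-1 & c & 2q & s\\ 1 & 5 & 0 & 0\\ 1 & 0 & 3 & 0\\ 1 & 0 & 0 & 1\end{pmatrix},\qquad c=\sum_{i=1}^{t}k_i=n-1-2q-s.
\]
Since the vectors in (a)--(d) span the orthogonal complement of the four-dimensional space of vectors constant on the parts, Lemma~\ref{equitable} shows the four eigenvalues of $M$ complete the spectrum. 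A Schur-complement expansion of $\det(\rho I-M)$, clearing the block $\diag(\rho-5,\rho-3,\rho-1)$ and substituting $c=n-1-2q-s$, yields exactly the stated quartic $p(\rho)=\rho^4-(n+8)\rho^3+(8n+15)\rho^2+(4q+4s-19n+4)\rho+12n-4q-12s-12$.

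Finally, to locate the roots I would evaluate $p$ at $\rho=0,1,2,3,4,5$ and $n$. Using $c,q,s\ge 1$ one gets $p(0)>0$, $p(1)=-8s<0$, $p(2)<0$, $p(3)=8q>0$, $p(4)=12q+4s-12>0$, $p(5)=-8c<0$, forcing roots in $(0,1)$, $(2,3)$ and $(4,5)$; since $p(\rho)\to+\infty$ and $p(n)<0$, the fourth root exceeds $n$, giving $\rho_1>n>5>\rho_2>4>3>\rho_3>2>1>\rho_4>0$ (here $n\ge 7$ secures $n>5$). I expect the main obstacle to be twofold. First, the multiplicity bookkeeping must be exact: even cycles contribute the value $1$ and cycles of length divisible by $4$ contribute $3$ through family (a), and these must not be conflated with the separate blocks $1^{(s+q-1)}$ and $3^{(q-1)}$ — this is why I track the explicit eigenvectors rather than bare eigenvalues. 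Second, the bound $p(n)<0$ relies on $2q+s=n-1-c\le n-4$, which yields $(4q+4s+16)n\le 4n^2$ and hence $p(n)\le-(4q+12s+12)<0$, establishing $\rho_1>n$.
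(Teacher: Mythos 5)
Your approach is essentially the paper's: the same four families of explicit eigenvectors (zero-sum cycle eigenvectors giving $3+2\cos\frac{2j\pi}{k_i}$, differences of component indicator vectors giving $5$, $3$ and $1$) combined with the same $4\times 4$ equitable quotient matrix for the remaining four eigenvalues. Your two refinements are worthwhile: the apex-splitting principle together with the orthogonal-complement argument is a tidier way of proving that the list is \emph{complete} (the paper instead argues linear independence of the constructed eigenvectors case by case, including the possible coincidences with $\rho_r$), and your sign evaluations $p(1)=-8s$, $p(2)<0$, $p(3)=8q$, $p(4)=12q+4s-12$, $p(5)=-8\sum_i k_i$ and $p(n)\le -(4q+12s+12)$ make explicit what the paper dismisses as ``directly verified''. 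All of these computations check out.

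There is, however, one genuine (though easily repaired) error. In family (b) you take ``differences of the all-ones vectors of two cycles''. When the two cycles have different lengths $k_i\neq k_j$, the vector $\mathbf{1}_{C_{k_i}}-\mathbf{1}_{C_{k_j}}$ has sum $k_i-k_j\neq 0$, so it violates the zero-sum requirement of your own splitting principle: the eigenvalue equation at the apex fails, and the vector is not orthogonal to the indicator of the cycle part, which also breaks your completeness count. The fix is to weight the combination, e.g. take $k_j\mathbf{1}_{C_{k_i}}-k_i\mathbf{1}_{C_{k_j}}$, which still lies in the eigenvalue-$4$ eigenspace of $Q(F)$ and now has zero sum; this is exactly what the paper does with its vectors ${\boldsymbol\gamma_j}$ (value $-k_{j+1}$ on $C_{k_1}$ and $k_1$ on $C_{k_{j+1}}$). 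The analogous constructions in (c) and (d) are unaffected because all copies of $K_2$ have the same size. With this one correction your proof is complete and correct.
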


\begin{proof} We suppose that $V(G)=\{w_i\,:\, 1\leq i\leq n\}$, where $d_G(w_n)=n-1$, $d_G(w_i)=1$ for $1\leq i\leq s$, $d_G(w_i)=2$ for $s+1\leq i\leq s+2q$, and $w_{s+2j}w_{s+2j-1}\in E(G)$ for $1\leq j\leq q$. Combining with Lemmas \ref{eigenvalue1-lem} and \ref{eigenvalue1-3-lem}, we find $m_G(1)\ge s+q-1$ and $m_G(3)\ge q-1$. In what follows, we construct the eigenvectors for the remaining eigenvalues.

We first deal with the eigenvalue 5. Let ${\boldsymbol \gamma_j}=(\gamma_{j}(w_1),\gamma_{j}(w_2),\ldots,\gamma_{j}(w_n))^\intercal$ for $1\leq j\leq t-1$, where $$\gamma_{j}(w_i)=\left\{\begin{array}{rl}-k_{j+1},&\text{for}~w_i\in V(C_{k_1}),\\ k_1,& \text{for}~w_i\in V(C_{k_{j+1}}),\\ 0,&\text{otherwise}.\end{array}\right.$$

It is easy to check that this comprises a set of linearly independent eigenvectors associated with the eigenvalue $5$ of $Q(G)$. Thus, $m_{G}(5)\ge t-1$.

At this point we  denote $V(C_{k_i})=\{a_{i1},a_{i2},\ldots,a_{ik_i}\}$ ($1\leq i\leq t$). Suppose $A(C_{k_{i}}){\boldsymbol \eta}=\lambda {\boldsymbol \eta}$ (${\boldsymbol \eta}\ne {\bf 0}$), i.e., ${\boldsymbol \eta}$ is an eigenvector for $A(C_{k_{i}})$ corresponding  to the eigenvalue $\lambda$. It is well known that $\lambda=2\cos(2j\pi/k_i)$, $0\leq j\leq k_i-1$. Besides, it is easy to prove that if $\lambda\neq 2$  (i.e. $j\neq 0$), then we can choose ${\boldsymbol \eta}=(\eta(a_{i1}),\eta(a_{i2}),\ldots,\eta(a_{ik_{i}}))^{\intercal}$ as  an  eigenvector corresponding  to $\lambda$  such that $\sum_{l=1}^{k_{i}}\eta(a_{il})=0$. Thus, by excluding $\lambda=2$, we  construct  $\bar{{\boldsymbol \eta}}=(\bar{\eta}(w_{1}), \bar{\eta}(w_{2}),\ldots,\bar{\eta}(w_{n}))^\intercal$ with
$$\bar{\eta}(w)=\left\{\begin{array}{rl}\eta(w),&\text{for}~w\in V(C_{k_{i}}),\\ 0,& \text{otherwise},\end{array}\right.$$
as an eigenvector for the eigenvalue $\lambda+3$. To see this, one may observe that $Q(C_{k_{i}})+I_{k_{i}}=A(C_{k_{i}})+3I_{k_{i}}$ is a principal submatrix of $Q(G)$. Therefore, $3+2\cos(2j\pi/k_i)$, $1\leq j\leq k_{i}-1$,  are the eigenvalues of $Q(G)$, for every $i~(1\leq i\leq t)$.

It remains to deal with the $Q$-eigenvalues denoted by $\rho_i$, $1\le i \le 4$, in the  statement. The polynomial $\rho^4-(n+8)\rho^3+(8n+15)\rho^2+(4q+4s-19n+4)\rho+12n-4q-12s-12$ can be derived from the characteristic polynomial of an equitable quotient matrix of $Q(G)$.
 It is directly verified  that the corresponding roots satisfy $\rho_1>n>5>\rho_2>4>3>\rho_3>2>1>\rho_4>0$. Moreover, $${\boldsymbol \psi_i}=\Big((\rho_i-3)(\rho_i-5)^{(s)},(\rho_i-1)(\rho_i-5)^{(2q)},(\rho_i-1)(\rho_i-3)^{(n-s-2q-1)},(\rho_i-1)(\rho_i-3)(\rho_i-5)\Big)^\intercal$$
	is an eigenvector of $Q(G)$ corresponding to the eigenvalue $\rho_i$ for $1\le i \le 4$.
	
Finally, another scenario must be considered: In certain cases, some of the computed eigenvalues may coincide, raising the question of whether their corresponding eigenvectors remain linearly independent.
From above proof we know that $\rho_1, \rho_2$, $\rho_3$ and $\rho_4$ are mutually distinct and do not belong to $\{5, 3, 1\}$. What's more,
$1 \leq 3 + 2\cos\frac{2j\pi}{k_i} < 5,$
for $1 \leq j \leq k_i - 1$ and $1 \leq i \leq t$.
In this context, if the equality
$$3 + 2\cos\frac{2j\pi}{k_i} = \rho_r$$
holds for some $r$, $j$ and $i$, then the corresponding eigenvectors are linearly independent by way of their construction. The same conclusion holds if the left-hand side is equal to either $3$ or $1$. The proof is completed.
\end{proof}

\begin{remark}\label{r3.2} Note that  $3+2\cos(2j\pi/k_i)=1$ holds if and only if $k_i$ is even and $j=k_i/2$. Thus, $m_G(1)=s+q-1+c_{e}(C)$, where $c_{e}(C)$ denotes the number of even cycles among $C_{k_i}$.  Besides, we also have $$n<\chi_1(G)=\rho_1<n+2,~ 0<\chi_n(G)=\rho_4<1,~\chi_2(G)<5~ (\text{when}~ t=1),~\text{and}~\chi_2(G)=5~(\text{when}~t\ge 2).$$ Last but not least, for any given order $n$, $\chi_1(G)$ depends on the numbers $q$ and $s$, but does not depend on the number of cycles nor the length of each cycle.
\end{remark}

We now extend the context to include the possibility of cycles $C_2$, which are regarded as digons consisting of two parallel edges connecting the same pair of vertices. In this setting, the degree of a vertex is defined as the number of edges incident with it, and the entries of the adjacency matrix are equal to the number of the corresponding edges.

\begin{lemma}\label{GG*-lem}Let $G^*\cong K_1\vee (C_{k_1}\cup C_{k_2}\cup\cdots \cup C_{k_t}\cup qK_2\cup sK_1)$, with $t,q,s\ge 1$ and $k_i\ge 2$ $(1\le i\le t)$. If $n$ is the order of $G^*$, then $\chi_1(G^*)$
is equal to the largest root of $$\rho^4-(n+8)\rho^3+(8n+15)\rho^2+(4q+4s-19n+4)\rho+12n-4q-12s-12.$$
\end{lemma}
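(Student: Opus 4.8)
The plan is to reuse the equitable partition that underlies the proof of Lemma~\ref{SQG-lem} and to observe that it remains equitable, with an unchanged quotient matrix, even when some of the cycles degenerate into digons. First I would partition $V(G^*)$ into four cells according to vertex degree: the $s$ pendant vertices coming from $sK_1$ (each of degree $1$), the $2q$ vertices coming from $qK_2$ (each of degree $2$), the $\sum_{i=1}^{t}k_i=n-1-s-2q$ vertices lying on the cycles and digons (each of degree $3$), and the single apex vertex of degree $n-1$.

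The crucial point is that this partition is equitable regardless of whether a component $C_{k_i}$ is a genuine cycle ($k_i\ge 3$) or a digon ($k_i=2$). Indeed, a vertex on a digon is joined to its unique neighbour by two parallel edges, so the corresponding adjacency entry equals $2$; consequently its within-component degree is $2$, exactly as for a vertex on a genuine cycle, and its degree in $G^*$ is $3$ in either case. Hence the row sum of $Q(G^*)=D(G^*)+A(G^*)$ restricted to the third cell equals $3+2=5$ in both situations, the diagonal contributing the degree $3$ and the off-diagonal entries (to the two cyclic neighbours, or the doubled edge to the single digon neighbour) contributing $2$. The blocks involving the pendants, the $K_2$-vertices and the apex are plainly unaffected by the degeneracy. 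Therefore the quotient matrix is
\begin{equation*}
M=\begin{bmatrix}
1 & 0 & 0 & 1\\
0 & 3 & 0 & 1\\
0 & 0 & 5 & 1\\
s & 2q & n-1-s-2q & n-1
\end{bmatrix},
\end{equation*}
which is precisely the matrix arising in the proof of Lemma~\ref{SQG-lem}; a direct expansion shows that its characteristic polynomial is the displayed quartic.

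To finish, I would invoke Lemma~\ref{equitable}. Since the apex is adjacent to every other vertex, $G^*$ is connected, so $Q(G^*)$ is a non-negative irreducible symmetric matrix and $M$ is the quotient matrix of an equitable partition of it. Lemma~\ref{equitable} then yields that the largest eigenvalue of $Q(G^*)$ coincides with the largest eigenvalue of $M$, namely the largest root of the quartic, which is exactly the claim.

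I do not anticipate a genuine obstacle here; the only point requiring care is the verification that a digon contributes the same block row sum ($5$) as a genuine cycle, and it is precisely this observation that makes the quotient matrix, and hence $\chi_1$, independent of the individual cycle lengths. Note that the full $Q$-spectrum is not claimed to be preserved under the degeneration, only the largest eigenvalue, so it suffices to control the quotient matrix and there is no need to track the eigenvectors attached to the cyclic components as in Lemma~\ref{SQG-lem}.
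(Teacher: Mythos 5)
Your proposal is correct and follows essentially the same route as the paper: the paper's proof likewise exhibits the equitable quotient matrix of $Q(G^*)$ (the same matrix as yours, up to a permutation of the four cells), notes that its characteristic polynomial is the displayed quartic, and concludes by Lemma~\ref{equitable}. Your additional verification that a digon vertex yields the same block row sum as a cycle vertex, and the remark on irreducibility, simply make explicit what the paper leaves as ``easy to see.''
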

\begin{proof} It is easy to see that $Q(G^*)$ has an equitable quotient matrix
\begin{equation*}
 N=   \begin{bmatrix}
    n-1 & n-1-2q-s & 2q & s\\
    1 & 5 & 0 & 0\\
    1 & 0 & 3 & 0\\
    1 & 0 & 0 & 1
    \end{bmatrix}.
\end{equation*}
We find that the characteristic polynomial of $N$ is $$\rho^4-(n+8)\rho^3+(8n+15)\rho^2+(4q+4s-19n+4)\rho+12n-4q-12s-12.$$
The  desired result follows from Lemma \ref{equitable}.
\end{proof}

Combining with Lemma \ref{SQG-lem}, Remark \ref{r3.2} and Lemma \ref{GG*-lem}, we  immediately obtain the following corollary.
\begin{corollary}\label{GG*-coro} Consider a simple graph $G\cong K_1\vee (C_{k_1}\cup C_{k_2}\cup\cdots \cup C_{k_t}\cup qK_2\cup sK_1)$ and a multigraph $G^*\cong K_1\vee (C_{k'_1}\cup C_{k'_2}\cup\cdots \cup C_{k'_z}\cup qK_2\cup sK_1)$, where $t,z,q,s\ge 1$, $k_i\ge 3$ $(1\le i\le t)$, and $k'_i\ge 2$ $(1\le i\le z)$. If $n(G)=n(G^*)$, then
$\chi_1(G)=\chi_1(G^*)$.
\end{corollary}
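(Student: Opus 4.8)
The plan is to read off the final statement, Corollary~\ref{GG*-coro}, as an immediate consequence of the three preceding results, exactly as the surrounding text suggests. The assertion is that a simple cone $G\cong K_1\vee (C_{k_1}\cup\cdots\cup C_{k_t}\cup qK_2\cup sK_1)$ and a multigraph cone $G^*\cong K_1\vee (C_{k'_1}\cup\cdots\cup C_{k'_z}\cup qK_2\cup sK_1)$ share the same largest $Q$-eigenvalue whenever they have the same order and the same parameters $q$ and $s$. The crucial observation — which the authors have carefully engineered into Lemmas~\ref{SQG-lem} and~\ref{GG*-lem} — is that the characteristic polynomial of the relevant equitable quotient matrix is
\begin{equation*}
\rho^4-(n+8)\rho^3+(8n+15)\rho^2+(4q+4s-19n+4)\rho+12n-4q-12s-12,
\end{equation*}
which depends only on $n$, $q$ and $s$, and is completely insensitive to the number of cycles or to their individual lengths. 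This is the whole point of the corollary.

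First I would invoke Lemma~\ref{SQG-lem} applied to the simple graph $G$: since $k_i\ge 3$, that lemma gives $\chi_1(G)=\rho_1$, the largest root of the displayed quartic, with $n(G)=n$, $q(G)=q$, $s(G)=s$. Next I would invoke Lemma~\ref{GG*-lem} applied to the multigraph $G^*$: since $k'_i\ge 2$ is permitted there, that lemma gives $\chi_1(G^*)$ as the largest root of the very same quartic, now with $n(G^*)$, $q$, $s$ in place of the parameters. By hypothesis $n(G)=n(G^*)$, and both graphs carry identical values of $q$ and $s$ by construction, so the two quartics are literally the same polynomial. Hence their largest roots coincide, giving $\chi_1(G)=\chi_1(G^*)$. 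The only subtlety worth a sentence is confirming that the quartic genuinely has a well-defined largest real root identified with $\chi_1$ in both cases; this is already guaranteed by the root-separation inequalities $\rho_1>n>5>\rho_2>\cdots$ established in Lemma~\ref{SQG-lem} for $G$, and by Lemma~\ref{equitable} (the Brouwer--Haemers equitable-partition result) for $G^*$, which ensures the largest eigenvalue of the quotient matrix equals $\chi_1(G^*)$.

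There is essentially no obstacle here: the entire difficulty was front-loaded into verifying that the quotient matrix is equitable and computing its characteristic polynomial, which was done in the proofs of Lemmas~\ref{SQG-lem} and~\ref{GG*-lem}. The corollary is a packaging statement that makes the parameter-dependence explicit and records the consequence that matters for later arguments — namely, one may freely replace a multigraph cone by a simple cone of the same order and the same $q,s$ without altering $\chi_1$. If I wanted to be scrupulous I would note that the quotient matrix partitions $V(G)$ into the apex, the $2q+s$ neighbours split into the $2q$ degree-two edge-endpoints and the $s$ pendant vertices, together with the $n-1-2q-s$ cycle vertices, and that this partition is equitable precisely because every cycle vertex (simple or on a digon) has the same degree profile relative to the blocks; but since this verification is exactly what Lemma~\ref{GG*-lem} already performs, I would simply cite it rather than repeat it.
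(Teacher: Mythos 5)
Your proof is correct and follows exactly the paper's own route: the paper likewise obtains the corollary immediately by combining Lemma~\ref{SQG-lem} (which, together with the root-separation inequalities recorded in Remark~\ref{r3.2}, identifies $\chi_1(G)$ with the largest root of the quartic) and Lemma~\ref{GG*-lem} (which does the same for $\chi_1(G^*)$ via the equitable quotient matrix), noting that the quartic depends only on $n$, $q$ and $s$.
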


We proceed with computing the $Q$-spectrum of the latter class introduced at the beginning of the section.

\begin{lemma}\label{SQF-lem} Let $F\cong K_1\vee (K_{1,3}\cup C_{k_1}\cup C_{k_2}\cup\cdots \cup C_{k_{t-1}}\cup qK_2\cup (s-1)K_1)$, with $q,s\geq 1, t\geq 2$ and $k_i\geq3$. If $n$ is the order of $F$, then
	$$S_Q(F)=\left\{\rho_1,\rho_2,\rho_3,\rho_4, 1^{(s+q-1)},2^{(2)},3^{(q-1)},5^{(t-1)}, 3+2\cos\frac{2j\pi}{k_i}~:~ 1\le j\le k_i-1, 1\le i\le t-1\right\},$$
	where $\rho_i$, $1\le i\le 4$, are the roots of $$\rho^4-(n+8)\rho^3+(8n+15)\rho^2+(4q+4s-19n+4)\rho+12n-4q-12s-12.$$ These roots satisfy $\rho_1>n>5>\rho_2>4>3>\rho_3>2>1>\rho_4>0$.
	
	In particular, for $F^*\cong K_1\vee (K_{1,3}\cup qK_2\cup  (s-1)K_1)$, $q,s\ge 1$, we have $$S_Q(F^*)=\left\{\rho_1,\rho_2,\rho_3,\rho_4,1^{(s+q-1)},2^{(2)},3^{(q-1)}\right\}.$$
\end{lemma}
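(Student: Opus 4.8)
The plan is to follow the template of the proof of Lemma \ref{SQG-lem}: construct explicit eigenvectors for the ``combinatorial'' eigenvalues $1,2,3,5$ and the cycle eigenvalues, and read off the four roots $\rho_1,\dots,\rho_4$ from an equitable quotient matrix. First I would fix a labelling in which the apex has degree $n-1$, the centre $c$ of $K_{1,3}$ has degree $4$, its three leaves have degree $2$, and the cycle-, edge- and isolated-vertices are grouped exactly as in Lemma \ref{SQG-lem}. The only genuinely new eigenvalue is $2$: any vector supported on the three leaves whose entries sum to zero is an eigenvector for $2$, since each leaf equation reads $2x_\ell+x_c+x_{\mathrm{apex}}=2x_\ell$ (as $x_c=x_{\mathrm{apex}}=0$) while the centre and apex equations hold because the leaf entries sum to zero; this yields $m_F(2)\ge 2$. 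The eigenvalue $3$ (multiplicity $q-1$) and $q+s-2$ of the copies of $1$ come verbatim from Lemmas \ref{eigenvalue1-lem} and \ref{eigenvalue1-3-lem} applied to the $q$ edges and the $s-1$ isolated vertices, and the cycle eigenvalues $3+2\cos(2j\pi/k_i)$ are produced, as before, from the principal submatrices $A(C_{k_i})+3I$.

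The delicate point is accounting for \emph{one extra copy each of $1$ and $5$}; these do not live inside a single component but arise from the interaction of the star with the isolated vertices (for $1$) and with the cycles (for $5$). I would capture them, together with $\rho_1,\dots,\rho_4$, through the equitable quotient matrix of the partition (apex, centre, leaves, cycle-vertices, edge-vertices, isolated-vertices),
\begin{equation*}
M=\begin{bmatrix}
n-1 & 1 & 3 & K & 2q & s-1\\
1 & 4 & 3 & 0 & 0 & 0\\
1 & 1 & 2 & 0 & 0 & 0\\
1 & 0 & 0 & 5 & 0 & 0\\
1 & 0 & 0 & 0 & 3 & 0\\
1 & 0 & 0 & 0 & 0 & 1
\end{bmatrix},\qquad K=\sum_{i=1}^{t-1}k_i=n-4-2q-s,
\end{equation*}
and invoke Lemma \ref{equitable}. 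What must be shown is that the characteristic polynomial of $M$ factors as $(\rho-1)(\rho-5)\,p(\rho)$, where $p$ is \emph{exactly} the quartic appearing in Lemma \ref{SQG-lem}. That $1$ and $5$ are eigenvalues of $M$ is immediate from explicit null vectors of $M-I$ and $M-5I$, and the real work is identifying the remaining quartic factor with $p$.

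To do this cleanly I would exhibit the eigenvector $\boldsymbol\psi_i$ for $\rho_i$ directly: assign $(\rho_i-3)^2$ to each leaf, $(\rho_i-3)(\rho_i+1)$ to $c$, and the same values as in Lemma \ref{SQG-lem} to the isolated-, edge-, cycle- and apex-vertices. The leaf and centre equations force precisely these two new values, and the apex equation then collapses to $p(\rho_i)=0$. Everything hinges on the identity
\begin{equation*}
3(\rho-3)^2+(\rho-3)(\rho+1)=3(\rho-1)(\rho-3)+(\rho-3)(\rho-5)=4(\rho-3)(\rho-2),
\end{equation*}
which says that in the apex equation the star $K_{1,3}$ contributes exactly what three cycle-vertices together with one isolated vertex (that is, $C_3\cup K_1$) would contribute. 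This is why the quartic is unchanged, why the interlacing $\rho_1>n>5>\rho_2>4>3>\rho_3>2>1>\rho_4>0$ is inherited from Lemma \ref{SQG-lem}, and ultimately why $F$ is destined to be $Q$-cospectral with the graph of Lemma \ref{SQG-lem}. Since each $\boldsymbol\psi_i$ is constant on every cell of the partition, it descends to an eigenvector of $M$; together with the null vectors for $1$ and $5$ this exhibits all six eigenvalues of $M$, giving the desired factorization.

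Finally I would add the multiplicities, $4+2+(q-1)+(s+q-1)+(t-1)+\sum_i(k_i-1)=n$, so the listed eigenvalues exhaust the spectrum and the lower bounds above are in fact equalities. As in Lemma \ref{SQG-lem}, the one loose end is that $\rho_2$ or $\rho_3$ might numerically coincide with some $3+2\cos(2j\pi/k_i)$; the associated eigenvectors have disjoint supports, hence stay linearly independent, so the multiplicities are unaffected. The degenerate cases fit the same scheme: when $s=1$ the isolated cell disappears and $M$ loses its eigenvalue $1$ (consistent, since then $s+q-1=q$ comes entirely from the edges), while for $F^*$ (no cycles) the cycle cell disappears and the eigenvalue $5$ together with all cycle eigenvalues drop out, the count again summing to $n$. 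I expect the main obstacle to be exactly this factorization of $\det(\rho I-M)$—pinning down that the quartic factor is literally the polynomial of Lemma \ref{SQG-lem}—which the displayed identity reduces to a short and transparent computation.
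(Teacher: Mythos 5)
Your proposal is correct and follows essentially the same route as the paper's proof: explicit eigenvectors for $1$, $2$, $3$, $5$ and the cycle eigenvalues, together with the eigenvector $\boldsymbol{\psi_i}$ whose apex equation collapses to the quartic via your displayed identity (your vector is exactly the paper's $\boldsymbol{\psi_i}$ rescaled by $(\rho_i-1)(\rho_i-3)(\rho_i-5)$, and the identity is exactly the paper's reduction of the apex equation, showing the star contributes like $C_3\cup K_1$). The only cosmetic difference is that you extract the extra copies of $1$ and $5$ from null vectors of the equitable quotient matrix $M$, whereas the paper writes down the corresponding eigenvectors of $Q(F)$ directly.
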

\begin{proof} We only consider the case when $s,t\ge3$ and $q\ge 2$, as the remaining possibility is treated by following the same lines. For convenience, we set $V(F)=\{w_i\,:\, 1\leq i\leq n\}$, with $d_F(w_n)=n-1$, $d_F(w_{n-4})=d_F(w_{n-3})=d_F(w_{n-2})=2, d_F(w_{n-1})=4$, $d_F(w_i)=1$ for $1\leq i\leq s-1$, $d_F(w_i)=2$ for $s\leq i\leq s+2q-1$,  and  $w_{s-1+2j}w_{s-2+2j}\in E(F)$ for $1\le j \le q$. Note that $K_{1,3}$ is a component of $F-w_n$ with $V(K_{1,3})=\{w_{n-4},w_{n-3},w_{n-2},w_{n-1}\}$.
	
The eigenvectors ${\boldsymbol \alpha_i}$, $1\leq i\leq s+q-2$, for $1$,  and the eigenvectors ${\boldsymbol \beta_i}$, $1\leq i\leq q-1$, for $3$ are constructed as  Lemmas~\ref{eigenvalue1-lem} and \ref{eigenvalue1-3-lem}, whereas the eigenvectors ${\boldsymbol \gamma_i}$, $1\leq i\leq t-2$, for $5$ and all the eigenvectors for $3+2\cos\frac{2j\pi}{k_i}, 1\le j\le k_i-1, 1\le i\le t-1$, are constructed as in the proof of Lemma~\ref{SQG-lem}. 	

Also,
$${\boldsymbol \alpha_{s+q-1}}
=\big(2,0^{(n-6)},-1,-1,-1,1,0\big)^\intercal,$$
$${\boldsymbol \gamma_{t-1}}
=\big(0^{(s+2q-1)},\big(-{6}/{k_1}\big)^{(k_1)},0^{(k_2)},\ldots,0^{(k_{t-1})},
1,1,1,3,0\big)^\intercal,$$
and
$$
\big(0^{(n-5)},-1,1,0,0,0\big)^\intercal,~ \big(0^{(n-5)},-1,0,1,0,0\big)^\intercal,$$
are associated with $1$, $5$ and $2^{(2)}$, respectively.

 We next  show that $\rho_1$, $\rho_2$, $\rho_3$ and $\rho_4$ are the eigenvalues of $Q(F)$.  Suppose $Q(F){\boldsymbol \psi}=\rho{\boldsymbol \psi}$ (${\boldsymbol \psi}\ne {\bf 0}$), where ${\boldsymbol \psi}=(\psi(w_1),\psi(w_2),\ldots,\psi(w_n))^\intercal$. Then for $w_n$ (the vertex with maximum degree), we have
	\begin{align}\label{e3.1}  (\rho-n+1)\psi(w_n)=\sum_{i=1}^{n-1}\psi(w_i). \end{align}
For the $s-1$ pendant vertices $w_1, w_2,\ldots, w_{s-1}$, we have
\begin{align*}(\rho-1)\psi(w_i)=\psi(w_n), \quad 1\le i \le s-1.
\end{align*}
So, if $\rho \ne 1$, then
\begin{align}\label{e3.2}
		\psi(w_i)=\frac{\psi(w_n)}{\rho-1}, \quad 1\leq i\leq s-1.
	\end{align}
For the vertices $w_{s},w_{s+1},\ldots,w_{s+2q-1}$ of degree 2, we have
\begin{equation*}\left\{
		\begin{array}{ll} (\rho-2)\psi(w_{s+2j-1})=\psi(w_{s+2j-2})+\psi(w_n),\\
			(\rho-2)\psi(w_{s+2j-2})=\psi(w_{s+2j-1})+\psi(w_n),  \quad 1\leq j\leq q.
		\end{array}
		\right.\end{equation*}
Gathering the previous equations, we  obtain $(\rho-1)\psi(w_{s+2j-1})=(\rho-1)\psi(w_{s+2j-2})$.  Hence, if $\rho\ne 1$ then $\psi(w_{s+2j-1})=\psi(w_{s+2j-2})$, $1\leq j\leq q$. Furthermore, if $\rho\ne 3$, then we get that
\begin{align}\label{e3.3}
\psi(w_{i})=\frac{\psi(w_n)}{\rho-3}, \quad s\leq i\leq s+2q-1.
\end{align}
We denote $V(C_{k_i})=\{a_{i1},a_{i2},\ldots,a_{ik_i}\}$, $1\leq i\leq t-1$. For the vertices in $V(C_{k_i})$, we have
	\begin{equation*}\left\{
		\begin{array}{ll} (\rho-3)\psi(a_{i1})=\psi(a_{i,k_i})+\psi(w_n)+\psi(a_{i2}),\\
			(\rho-3)\psi(a_{i2})=\psi(a_{i1})+\psi(w_n)+\psi(a_{i3}),\\
			\quad\quad\quad\quad \vdots \\
			(\rho-3)\psi(a_{i,k_i})=\psi(a_{i,k_i-1})+\psi(w_n)+\psi(a_{i1}).
		\end{array}
		\right.\end{equation*}
	We set $\psi(a_{i,k_i})=\psi(a_{i,k_i-1})=\cdots =\psi(a_{i,1})$. If $\rho\ne 5$, then we have
	\begin{align}\label{e3.4}
		\psi(a_{ij})=\frac{\psi(w_n)}{\rho-5},\quad 1\leq j\leq k_i, \,\, 1\leq i\leq t-1.
	\end{align}
For the vertices in $V(K_{1,3})=\{w_{n-4},w_{n-3},w_{n-2},w_{n-1}\}$, we have
\begin{equation}\label{e3.5}\left\{
\begin{array}{ll}
\rho\psi(w_{n-1})=4\psi(w_{n-1})+\psi(w_{n-2})+\psi(w_{n-3})+\psi(w_{n-4})+\psi(w_n),\\
\rho\psi(w_{n-2})=2\psi(w_{n-2})+\psi(w_{n-1})+\psi(w_n),\\
\rho\psi(w_{n-3})=2\psi(w_{n-3})+\psi(w_{n-1})+\psi(w_n),\\
\rho\psi(w_{n-4})=2\psi(w_{n-4})+\psi(w_{n-1})+\psi(w_n).\\
\end{array}
\right.
\end{equation}
We may take that $\psi(w_n)=1$. From equality \eqref{e3.5}, we deduce that if $\rho\notin\{1,2,5\}$, then
\begin{align}\label{e3.6}
\psi(w_{n-1})=\frac{\rho+1}{(\rho-1)(\rho-5)}
\end{align}
and
\begin{align}\label{e3.7}
\psi(w_{n-2})=\psi(w_{n-3})=\psi(w_{n-4})=\frac{\rho-3}{(\rho-1)(\rho-5)}.
\end{align}

	Combining \eqref{e3.1}, \eqref{e3.2}, \eqref{e3.3}, \eqref{e3.4}, \eqref{e3.6} and \eqref{e3.7} and inserting $\psi(w_n)=1$, we find
	$$\rho-n+1=(s-1)\frac{1}{\rho-1}+ 2q\frac{1}{\rho-3}+\frac{4(\rho-2)}{(\rho-1)(\rho-5)}+(n-s-2q-4)\frac{1}{\rho-5}.$$
	Equivalently, $$\rho^4-(n+8)\rho^3+(8n+15)\rho^2+(4q+4s-19n+4)\rho+12n-4q-12s-12=0.$$ It is verified directly that the corresponding roots satisfy the desired inequalities. Therefore,
 $\rho_i$ appears as a $Q$-eigenvalue of $F$ and an associated eigenvector is	$${\boldsymbol \psi_i}=\Big(\big(\frac{1}{\rho_i-1}\big)^{(s-1)},\big(\frac{1}{\rho_i-3}\big)^{(2q)},\big(\frac{1}{\rho_i-5}\big)^{(n-2q-s-4)},
	\big(\frac{\rho_i-3}{(\rho_i-1)(\rho_i-5)}\big)^{(3)}, \frac{\rho_i+1}{(\rho_i-1)(\rho_i-5)},   1\Big)^\intercal,$$ for
 $1\le i \le 4$.

Linear independence in matching cases are considered as in the proof of Lemma~\ref{SQG-lem}. Finally, the result for $F^*$ is extracted by setting $t=1$.
\end{proof}

Ultimately, we record the proof of Theorem \ref{13t}.

\medskip\noindent{\bf \textit{Proof of Theorem \ref{13t}}.} From Lemmas~\ref{SQG-lem} and~\ref{SQF-lem}, the desired result follows immediately upon setting $k_t = 3$.
\qed

\section{$Q$-spectral moments of certain cones}\label{sec4}

For a graph $G$ and a non-negative integer $r$, the sums
\[
S_r(G) = \sum_{i=1}^{n} \lambda_i^r(G) \quad \text{and} \quad T_r(G) = \sum_{i=1}^{n} \chi_i^r(G)
\]
are called the $r$th \emph{spectral moment} and the $r$th \emph{$Q$-spectral moment} of $G$, respectively.

In this section, we compute the difference between the $Q$-spectral moments of certain cones. To this end, we first introduce some additional notation and recall a well-known lemma concerning $Q$-spectral moments.

We denote by $\varsigma_G(H)$ the number of subgraphs of $G$ that are isomorphic to $H$. Moreover, for a vertex $v \in V(G)$, let $t_G(v)$ be the number of triangles containing $v$. We also define the quantities
\begin{equation}\label{eq:tf}
\bar{t}(G) = 8 \sum_{v \in V(G)} t_G(v) d_G(v), \quad
\bar{f}(G) = 4 \sum_{uv \in E(G)} d_G(u) d_G(v),
\end{equation}
which frequently arise in expressions for $Q$-spectral moments in terms of subgraph counts and vertex degrees.

\begin{lemma} [\cite{S4G,18,T4G}]\label{TG-lem} If $G$ is a simple graph  of order $n$ and size $m$,  then  $$ {S}_{4}(G)=2m+4\varsigma_G(P_3)+8\varsigma_G(C_4), T_{4}(G)={S}_{4}(G)+\bar{t}(G)+\bar{f}(G)+\sum_{i=1}^{n}d^{4}_{i}+4\sum_{i=1}^{n}d^{3}_{i},$$ $$T_{3}(G)=6\varsigma_G(C_3)+\sum_{i=1}^{n}d^{3}_{i}+3\sum_{i=1}^{n}d^{2}_{i}, \hspace{5pt}T_{2}(G)=\sum_{i=1}^{n}d^{2}_{i}+2m\hspace{5pt}\text{and}\hspace{5pt}
	T_{1}(G)=\sum_{i=1}^{n}d_{i}=2m.$$
\end{lemma}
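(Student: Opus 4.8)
The unifying device is the trace identity $T_r(G)=\operatorname{tr}\!\big(Q(G)^r\big)=\operatorname{tr}\!\big((D+A)^r\big)$, where I abbreviate $D=D(G)$ and $A=A(G)$. The plan is to expand $(D+A)^r$ into the $2^r$ words in the noncommuting letters $D$ and $A$, take the trace term by term, and exploit two structural features: the cyclic invariance of the trace, which groups words into rotation classes, and the fact that $D$ is diagonal while $A$ has zero diagonal. The low-order cases fall out immediately. Since $A_{ii}=0$, only the all-$D$ word survives in $T_1$, giving $T_1=\operatorname{tr}D=\sum_i d_i=2m$. For $T_2$ the cross terms $\operatorname{tr}(DA)=\operatorname{tr}(AD)=0$ vanish, while $\operatorname{tr}(A^2)=\sum_i(A^2)_{ii}=\sum_i d_i=2m$ and $\operatorname{tr}(D^2)=\sum_i d_i^2$, yielding $T_2=\sum_i d_i^2+2m$. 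For $T_3$ every word with exactly one $A$ has zero trace; the three words with two $A$'s each contribute $\operatorname{tr}(DA^2)=\sum_i d_i(A^2)_{ii}=\sum_i d_i^2$ by cyclic symmetry; and $\operatorname{tr}(A^3)=6\varsigma_G(C_3)$, since a closed $3$-walk traverses a directed triangle. This gives $T_3=\sum_i d_i^3+3\sum_i d_i^2+6\varsigma_G(C_3)$.

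The heart of the argument is $T_4$, where the sixteen words split by their number of $A$'s. The single word with no $A$ gives $\sum_i d_i^4$; the four words with one $A$ all vanish. The six words with two $A$'s fall into two rotation classes: the class of $A^2D^2$ (size four), each member having trace $\sum_i(A^2)_{ii}d_i^2=\sum_i d_i^3$, and the class of $ADAD$ (size two), each member having trace $2\sum_{uv\in E}d_ud_v$; together these contribute $4\sum_i d_i^3+\bar f(G)$. The four words with three $A$'s form one rotation class, each with trace $\operatorname{tr}(A^3D)=\sum_i (A^3)_{ii}d_i=2\sum_i t_G(v_i)d_i$, because $(A^3)_{ii}=2t_G(v_i)$; summing the four members yields $\bar t(G)$. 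The last word contributes $\operatorname{tr}(A^4)=S_4(G)$, and adding the five groups reproduces the stated expression for $T_4$.

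It remains to evaluate $S_4(G)=\operatorname{tr}(A^4)$, which I would treat through the symmetric identity $\operatorname{tr}(A^4)=\sum_{i,j}(A^2)_{ij}^2$. Splitting off the diagonal gives $\sum_i(A^2)_{ii}^2=\sum_i d_i^2$, while for $i\ne j$ the entry $c_{ij}=(A^2)_{ij}$ counts the common neighbours of $v_i$ and $v_j$. The identity $c_{ij}^2=c_{ij}+c_{ij}(c_{ij}-1)$ then converts the off-diagonal sum into two combinatorial counts, $\sum_{i\ne j}c_{ij}=2\varsigma_G(P_3)$ and $\sum_{i\ne j}c_{ij}(c_{ij}-1)=8\varsigma_G(C_4)$, after which rewriting $\sum_i d_i^2=2\varsigma_G(P_3)+2m$ collapses everything to $S_4(G)=2m+4\varsigma_G(P_3)+8\varsigma_G(C_4)$. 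I expect this final conversion to be the main obstacle: the factor $8$ in the $C_4$ count hides a double count, since each $4$-cycle is recorded once for each of its two diagonal pairs $\{v_i,v_j\}$, once more for the ordering of that pair, and again for the ordering of the two common neighbours. Keeping this bookkeeping consistent with the $P_3$ term, where each path is counted by its centre via $\sum_i\binom{d_i}{2}$, is where the computation is most error-prone, and it is the one place where careful attention, rather than mechanical expansion, is required.
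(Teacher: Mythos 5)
Your proof is correct -- I checked each rotation-class computation and the final conversion of $\operatorname{tr}(A^4)$, including the factor-$8$ bookkeeping for $C_4$ and the identity $\sum_i d_i^2=2m+2\varsigma_G(P_3)$. Note, however, that the paper offers no proof of Lemma~\ref{TG-lem}: it is quoted as a known result from \cite{S4G,18,T4G}, so there is nothing internal to compare against. Relative to those sources, your derivation is a genuinely different packaging of the same facts: the cited literature (notably \cite{18}) obtains $T_r(G)=\operatorname{tr}\bigl(Q(G)^r\bigr)$ by counting closed \emph{semi-edge walks} of length $r$ (walks permitted to ``stay'' at a vertex via a half-edge), and the degree, triangle and $C_4$ terms emerge from classifying those walks; your expansion of $(D+A)^r$ into the $2^r$ words in $D$ and $A$, grouped into rotation classes under the cyclic trace, is the linear-algebraic counterpart, with each word type corresponding to a class of semi-edge walks with prescribed positions of the stays. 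What your route buys is that the vanishing of every word containing a single $A$, and the equality of traces within a rotation class, are immediate from $A_{ii}=0$ and cyclicity, so the only genuine combinatorics is concentrated in $\operatorname{tr}(A^3)=6\varsigma_G(C_3)$ and in $\operatorname{tr}(A^4)$ -- exactly where you located the delicate point. One hypothesis worth flagging explicitly: both $\operatorname{tr}(ADAD)=2\sum_{uv\in E}d_ud_v$ and the conversion of $\sum_{i\ne j}c_{ij}(c_{ij}-1)$ into $8\varsigma_G(C_4)$ use $A_{ij}\in\{0,1\}$, i.e., simplicity of $G$; this is consistent with the statement's assumption, but it deserves a sentence, since the paper elsewhere works with digons, for which these two evaluations fail.
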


In this section, we denote $$G\cong K_1\vee (C_{k_1}\cup C_{k_2}\cup\cdots \cup C_{k_t}\cup qK_2\cup sK_1), \,\, F^*\cong K_1\vee (P_{l_1}\cup P_{l_2}\cup\cdots \cup P_{l_q}\cup sK_1)$$  $$\text{and} \quad  \widetilde{F}\cong K_1\vee (C_{k'_1}\cup C_{k'_2}\cup\cdots \cup C_{k'_z}\cup P_{l'_1}\cup P_{l'_2}\cup\cdots \cup P_{l'_q}\cup sK_1),$$ where $t,z,q,s\ge 1, l_i,l'_i\ge 2$ and $k_i,k'_{i}\ge 3$. These graphs share the same order $n$, size $m$ and  vertex degree sequence. To ease language, for a constant $c$, we write the multisets $\{k_i~:~ k_i=c, 1\le i \le t \}$ and $\{k'_j~:~ k'_j=c, 1\le j \le z \}$ as $\{k_i~:~ k_i=c\}$ and $\{k'_j~:~ k'_j=c\}$, respectively.

In the following lemmas, the previous setting is assumed by default. In particular, $G$, $F^*$ and $\widetilde{F}$ always denote the graphs specified above.

\begin{lemma}\label{T4G-lem}The following equalities hold.
	\begin{itemize}
		\item[(i)] $\varsigma_{G}(P_3)=3(n-1-s)-4q+\binom{n-1}{2};$
		\item[(ii)] $\varsigma_{G}(C_4)=n-1-2q+\varsigma_{G-v_1}(C_4)=n-2q-s-1+
|\{k_i~:~ k_i=4\}|.$ In particular, if $k_i \ne 4$ for $1\le i\le t$, then $\varsigma_G(C_4)=n-2q-s-1$;
\item[(iii)] $\bar{t}(G)=8\big((n+5)(n-s-1)-q(n+7)+9|\{k_i~:~ k_i=3\}|\big).$ In particular, if $k_i \ne 3$  for $1\le i\le t$, then $\bar{t}(G)=8\big((n+5)(n-s-1)-q(n+7)\big)$;
    \item[(iv)] $\bar{f}(G)=4\big(3(n+2)(n-1-2q)+4qn-s(2n+7)\big).$
	\end{itemize}
\end{lemma}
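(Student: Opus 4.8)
The plan is to obtain all four identities by direct subgraph counting, reading everything off the degree sequence of $G$. Write $v_1$ for the apex, so $d_G(v_1)=n-1$; each of the $\sum_i k_i = n-1-2q-s$ vertices lying on a cycle has degree $3$, each of the $2q$ vertices of a $K_2$ has degree $2$, and each of the $s$ isolated vertices has degree $1$. I record once and for all that $G-v_1=C_{k_1}\cup\cdots\cup C_{k_t}\cup qK_2\cup sK_1$ has exactly $\sum_i k_i+q=n-1-q-s$ edges. These data drive every computation below.

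For (i) I would invoke the standard identity $\varsigma_G(P_3)=\sum_{v}\binom{d_G(v)}{2}$, which holds because a copy of $P_3$ is determined by its central vertex together with an unordered pair of its neighbours. Plugging in the four degree values, and using $\binom{1}{2}=0$, $\binom{2}{2}=1$, $\binom{3}{2}=3$, gives $\binom{n-1}{2}+3(n-1-2q-s)+2q$, which rearranges to the stated expression. For (ii) I would split the copies of $C_4$ according to whether they contain $v_1$. A $4$-cycle avoiding $v_1$ lies inside $G-v_1$, hence must be one of its length-$4$ cycle components, giving $\varsigma_{G-v_1}(C_4)=|\{k_i:k_i=4\}|$. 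A $4$-cycle through $v_1$ has the shape $v_1\,a\,b\,c\,v_1$ with $ab,bc\in E(G-v_1)$, so it corresponds bijectively to a copy of $P_3$ in $G-v_1$; since only the degree-$2$ cycle vertices of $G-v_1$ can be the centre of such a $P_3$, there are exactly $\sum_i k_i=n-1-2q-s$ of them. Summing the two contributions produces the final value $n-2q-s-1+|\{k_i:k_i=4\}|$.

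For (iii) and (iv) the engine is an accounting over vertices (resp. edges) grouped by type. For $\bar{t}(G)=8\sum_v t_G(v)d_G(v)$ I would first tabulate the local triangle counts: $t_G(v_1)$ equals the number of edges of $G-v_1$, namely $n-1-q-s$; a cycle vertex of a $C_{k_i}$ with $k_i\ge 4$ sits in exactly two triangles (each through $v_1$ and one cycle-neighbour), while a cycle vertex of a $C_3$ sits in three (the extra one being the cycle itself); a $K_2$-vertex sits in one, and an isolated vertex in none. Multiplying each $t_G(v)$ by the corresponding degree and summing, separating the generic degree-$3$ cycle vertices from the $3|\{k_i:k_i=3\}|$ exceptional ones, yields the claimed formula. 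For $\bar{f}(G)=4\sum_{uv\in E(G)}d_G(u)d_G(v)$ I would partition $E(G)$ into the $n-1$ edges incident with $v_1$ (each edge $v_1 v$ contributing $(n-1)d_G(v)$), the $\sum_i k_i$ cycle edges (each joining two degree-$3$ vertices, contribution $9$), and the $q$ edges of the $K_2$'s (each joining two degree-$2$ vertices, contribution $4$), then collect the pieces.

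The difficulty is bookkeeping rather than ideas, and it concentrates in (iii). There the local count $t_G(v)$ at a cycle vertex genuinely depends on the length of its cycle, because in a $C_3$ the two neighbours of a vertex are themselves adjacent and create an extra triangle; the same sensitivity to $C_3$- and $C_4$-components is what forces the correction terms $|\{k_i:k_i=3\}|$ and $|\{k_i:k_i=4\}|$ to appear. The main care, then, is to keep these exceptional components cleanly separated from the generic contribution so that the residual correction terms emerge in exactly the stated form; once the case split is organised, the remaining simplifications are routine algebra.
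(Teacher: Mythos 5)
Your proof is correct and follows essentially the same route as the paper: both arguments count each quantity by classifying vertices (apex $v_1$, cycle vertices, $K_2$-vertices, pendant vertices) and edges by type, with the same special treatment of $C_3$- and $C_4$-components producing the correction terms. As a side benefit, your intermediate counts are the accurate ones where the paper contains misprints: the number of $4$-cycles through $v_1$ is $\sum_i k_i = n-1-2q-s$, so the first equality in (ii) should read $\varsigma_G(C_4)=n-1-2q-s+\varsigma_{G-v_1}(C_4)$ rather than $n-1-2q+\varsigma_{G-v_1}(C_4)$, and in (iv) the cycle-edge contribution is $9(n-1-2q-s)$, not $9(n-1-2q)$ as printed in the paper's proof; in both cases the final stated formulas are the ones your computation confirms.
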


\begin{proof} First, we have
 $$\varsigma_{G}(P_3)=\sum_{i=1}^{t}k_i+2\sum_{i=1}^{t}k_i+\binom{n-1}{2}+2q
 =3(n-s-1)-4q+\binom{n-1}{2}.$$
  Secondly,
  $$ \varsigma_{G}(C_4)=\sum_{i=1}^{t}k_i+\varsigma_{G-v_1}(C_4)=n-1-2q-s+|\{k_i~:~ k_i=4\}|.  $$
  For convenience, we suppose $k_1\ge k_2\ge \cdots \ge k_b\ge 4 >k_{b+1}=\cdots=k_t=3$, where $0\le b\le t$. Using the first equality of~\eqref{eq:tf}, we compute
   \begin{eqnarray*}\label{tbarG-e}
\sum_{v\in V(G)}t_G(v)d_G(v)&=&4q+(\sum_{i=1}^{t}k_i+q)(n-1)+6\sum_{i=1}^{b}k_i+9\sum_{i=b+1}^{t}k_i    \nonumber\\
&=&4q+(n-1-q-s)(n-1)+6\big(n-1-2q-s-3(t-b)\big)+27(t-b) \nonumber\\
&=&(n+5)(n-s-1)-q(n+7)+9(t-b) \nonumber\\
&=&(n+5)(n-s-1)-q(n+7)+9|\{k_i~:~ k_i=3\}|. \nonumber
\end{eqnarray*}
Thus, (iii) holds.

Finally, we deal with (iv). From the second equality of~\eqref{eq:tf} and
   \begin{eqnarray*}\label{fbarG-e}
\sum_{uv\in E(G)}d_G(u)d_G(v)
&=&9(n-1-2q)+4q+3(n-1)(n-1-2q-s)+4q(n-1)+s(n-1)   \nonumber\\
&=&3(n+2)(n-1-2q)+4qn-s(2n+7), \nonumber
\end{eqnarray*}
we obtain the desired equality.
\end{proof}

In what follows we deal with particular spectral moments or $Q$-spectral moments of $F^*$ and $G$.

\begin{lemma}\label{S4F*-lem} We have ${S}_4(F^*)={S}_4(G)-8|\{k_i~:~ k_i=4\}|.$ In particular, if $k_i \ne 4$  for $1\le i\le t$, then ${S}_4(F^*)={S}_4(G)$.
\end{lemma}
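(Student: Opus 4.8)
The plan is to apply the first identity of Lemma~\ref{TG-lem}, namely $S_4 = 2m + 4\varsigma(P_3) + 8\varsigma(C_4)$, to both $G$ and $F^*$ and to track which of the three summands actually differ. Since $G$ and $F^*$ share the same size $m$ by hypothesis, the term $2m$ contributes nothing to the difference $S_4(F^*) - S_4(G)$, so I may discard it at once.

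First I would observe that the count of $P_3$-subgraphs is a function of the degree sequence alone: every $P_3$ is determined by its central vertex $v$ together with an unordered pair of neighbours of $v$, so $\varsigma(P_3) = \sum_v \binom{d(v)}{2}$. Because $G$ and $F^*$ have identical degree sequences, $\varsigma_{F^*}(P_3) = \varsigma_G(P_3)$, and hence the $P_3$-term cancels as well. This reduces the claim to the single identity $\varsigma_{F^*}(C_4) - \varsigma_G(C_4) = -|\{k_i : k_i = 4\}|$, after multiplying by $8$.

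Next I would compute $\varsigma_{F^*}(C_4)$ by the same cone decomposition implicit in Lemma~\ref{T4G-lem}(ii). Writing $v_1$ for the apex (the universal vertex), every $4$-cycle of a cone $K_1 \vee H$ either avoids $v_1$, in which case it is a $4$-cycle of $H$, or passes through $v_1$, in which case it has the form $v_1\,a\,b\,c\,v_1$ with $a\,b\,c$ a $P_3$ in $H$; hence $\varsigma_{K_1 \vee H}(C_4) = \varsigma_H(C_4) + \varsigma_H(P_3)$. For $F^* = K_1 \vee (P_{l_1} \cup \cdots \cup P_{l_q} \cup sK_1)$ the base $H$ is a disjoint union of paths and isolated vertices, so $\varsigma_H(C_4) = 0$, while $\varsigma_H(P_3) = \sum_{i}(l_i - 2)$. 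Using the order (equivalently degree-sequence) constraint $\sum_i l_i = \sum_i k_i + 2q$, this becomes $\sum_i k_i = n - 1 - 2q - s$, so $\varsigma_{F^*}(C_4) = n - 1 - 2q - s$. Comparing with $\varsigma_G(C_4) = n - 1 - 2q - s + |\{k_i : k_i = 4\}|$ from Lemma~\ref{T4G-lem}(ii) yields exactly the required difference, and the special case follows by putting $|\{k_i : k_i = 4\}| = 0$.

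The computation is essentially bookkeeping; the only genuine content is the $C_4$-counting identity for cones. The point I expect to require care is verifying that the decomposition $\varsigma_{K_1 \vee H}(C_4) = \varsigma_H(C_4) + \varsigma_H(P_3)$ is exact, i.e.\ that each $4$-cycle through the apex corresponds to precisely one $P_3$ of the base and conversely, with no over- or under-counting arising from chords or from the pendant edges contributed by the $K_2$-components of $G$ or the path endpoints of $F^*$. Once this bijection is secured, the matching degree sequences force all other contributions to coincide, and the sole surviving discrepancy is the $|\{k_i : k_i = 4\}|$ genuine $4$-cycles sitting inside the $C_4$-components of $G$, which are absent from the acyclic base of $F^*$.
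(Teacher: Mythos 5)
Your proof is correct, and its skeleton coincides with the paper's: both apply $S_4 = 2m + 4\varsigma(P_3) + 8\varsigma(C_4)$ from Lemma~\ref{TG-lem}, discard the $2m$ term, and trace the entire discrepancy to the $C_4$ count via the apex decomposition $\varsigma_{K_1\vee H}(C_4) = \varsigma_H(C_4) + \varsigma_H(P_3)$, which is exactly the computation behind Lemma~\ref{T4G-lem}(ii) and the paper's evaluation $\varsigma_{F^*}(C_4) = \sum_{i=1}^{r}(l_i-2) = n-2q-s-1$. The one genuine difference is your treatment of the $P_3$ term: the paper computes $\varsigma_{F^*}(P_3)$ explicitly (paths through the apex plus paths in the base) and then checks by arithmetic that it equals $\varsigma_G(P_3)$ from Lemma~\ref{T4G-lem}(i), whereas you invoke the identity $\varsigma(P_3) = \sum_{v}\binom{d(v)}{2}$ together with the fact, built into the setting of Section~\ref{sec4}, that $G$ and $F^*$ share the same degree sequence. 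Your route makes the cancellation of the $P_3$ term automatic---it holds for \emph{any} two graphs with equal degree sequences, not just these cones---and thereby isolates the only structure-dependent quantity, the $C_4$ count; the paper's explicit count costs a few extra lines but stays within the single cone-decomposition framework it reuses in Lemmas~\ref{S4F**-lem} and~\ref{T4F**-lem}. The bijection you flagged as the delicate point is indeed sound: $C_4$'s are counted as subgraphs rather than induced subgraphs, so chords are irrelevant, and each $4$-cycle through the apex meets the base in exactly one $P_3$ and conversely.
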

\begin{proof}We suppose $l_1\ge l_2\ge \cdots \ge l_r \ge 3>l_{r+1}=\cdots =l_q=2$ ($1\le r \le q$).

By employing Lemma~\ref{T4G-lem}, we find
   \begin{eqnarray*}
\varsigma_{F^*}(P_3)&=&\binom{n-1}{2}+2\sum_{i=1}^{q}(l_i-1)+\sum_{i=1}^{r}(l_i-2)    \nonumber\\
&=&\binom{n-1}{2}+2(n-q-s-1)+(n-2q-s-1)    \nonumber\\
&=&\binom{n-1}{2}+3(n-s-1)-4q\nonumber\\
&=&\varsigma_{G}(P_3).\nonumber
\end{eqnarray*}
By the same lemma,
$$\varsigma_{F^*}(C_4)=\sum_{i=1}^{r}(l_i-2)=n-2q-s-1=\varsigma_{G}(C_4)-|\{k_i~:~ k_i=4 \}|,$$
which, together with Lemma~\ref{TG-lem} leads to   the desired result.
\end{proof}

\begin{lemma}\label{T4F*-lem} For $F^*\cong K_1\vee (P_{l_1}\cup P_{l_2}\cup\cdots \cup P_{l_q}\cup sK_1)$, with $l_1\ge l_2\ge \cdots \ge l_r \ge 3>l_{r+1}=\cdots =l_q=2$ $(1\le r \le q)$, we have
$$T_4(F^*)=T_4(G)-8|\{k_i~:~ k_i=4\}|-72|\{k_i~:~ k_i=3\}|-4r<T_4(G).$$
\end{lemma}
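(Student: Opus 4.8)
The plan is to expand $T_4$ for both graphs via Lemma~\ref{TG-lem}, writing $T_4 = S_4 + \bar t + \bar f + \sum_i d_i^4 + 4\sum_i d_i^3$, and then subtract. Since $G$ and $F^*$ are assumed to share the same vertex degree sequence, the two purely degree-based terms $\sum_i d_i^4$ and $4\sum_i d_i^3$ are identical for the two graphs and cancel in the difference. Thus everything reduces to controlling $S_4(F^*)-S_4(G)$, $\bar t(F^*)-\bar t(G)$ and $\bar f(F^*)-\bar f(G)$. The first is already available: Lemma~\ref{S4F*-lem} gives $S_4(F^*)-S_4(G)=-8|\{k_i:k_i=4\}|$, which accounts for the first term of the claimed formula.

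For the $\bar t$ term I would evaluate $\bar t/8=\sum_v t(v)d(v)$ by partitioning the vertices into the apex and the base vertices grouped by degree, using that in both cones each base edge forms a triangle with the apex, so $t(v)=d_H(v)$ for a base vertex unless it also lies in an internal triangle. The only such internal triangles occur at the vertices of a $C_3$ component of $G$; the base of $F^*$ is a union of paths and is triangle-free. Because the apex contribution $m(H)(n-1)$ agrees (the sizes coincide), invoking part (iii) of Lemma~\ref{T4G-lem} for $G$ against the analogous direct count for $F^*$ yields $\bar t(F^*)-\bar t(G)=-72|\{k_i:k_i=3\}|$. This is essentially bookkeeping once the triangle structure is isolated.

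The delicate term, and the step I expect to be the main obstacle, is $\bar f$. Writing $\bar f/4=\sum_{uv\in E}d(u)d(v)$, the apex-to-base edges contribute $\sum_{v}(n-1)d(v)$, which depends only on the common degree sequence and cancels, so the whole difference is carried by the base edges. In $G$ each cycle edge joins two degree-$3$ vertices and each $K_2$-edge joins two degree-$2$ vertices, giving $9\sum k_i+4q$. In $F^*$ one must carefully track that a path $P_{l_i}$ with $l_i\ge 3$ has degree-$2$ endpoints but degree-$3$ interior vertices, so its two end edges contribute $6$ each and its $l_i-3$ interior edges contribute $9$ each, while each $P_2$ contributes $4$; summing over the $r$ long paths and the $q-r$ copies of $K_2$ produces $4q-19r+9\sum_{i=1}^{r}l_i$. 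The key reduction is then to eliminate $\sum_{i=1}^{r}l_i$ using the order constraint $\sum_i l_i=\sum_i k_i+2q$, which collapses to $\sum_{i=1}^{r}l_i=\sum k_i+2r$ and gives the base-edge sum $9\sum k_i+4q-r$; hence $\bar f(F^*)-\bar f(G)=-4r$. Assembling the three differences yields the stated identity, and since $1\le r$ the single term $-4r\le -4<0$ (with the other two contributions nonpositive) forces $T_4(F^*)<T_4(G)$.
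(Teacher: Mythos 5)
Your proposal is correct and takes essentially the same route as the paper: expand $T_4$ via Lemma~\ref{TG-lem}, cancel the purely degree-based sums using the common degree sequence, take the $S_4$ difference from Lemma~\ref{S4F*-lem}, and establish $\bar{t}(F^*)-\bar{t}(G)=-72|\{k_i : k_i=3\}|$ and $\bar{f}(F^*)-\bar{f}(G)=-4r$ by the same direct triangle/edge counts, including the identical reduction $\sum_{i=1}^{r}l_i=(n-1-s-2q)+2r$ that the paper uses to eliminate the path lengths. The concluding strict inequality from $r\ge 1$ also matches the paper's argument.
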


\begin{proof} By employing Lemma \ref{T4G-lem}, we find
   \begin{eqnarray*}
\bar{t}(F^*)&=&8\sum_{v\in V(F^*)}t_{F^*}(v)d_{F^*}(v)    \nonumber\\
&=&8\big(4q+6(n-1-2q-s)+(n-q-s-1)(n-1)\big)   \nonumber\\
&=&8\big((n+5)(n-s-1)-q(n+7)\big)\nonumber\\
&=&\bar{t}(G)-72|\{k_i~:~ k_i=3\}|\nonumber
\end{eqnarray*}
and
   \begin{eqnarray*}
\bar{f}(F^*)&=&4\sum_{uv\in E(F^*)}d_{F^*}(u)d_{F^*}(v)   \nonumber\\
&=&4\big(4q(n-1)+3(n-1)(n-1-s-2q)+s(n-1)+12r+4(q-r)+9(\sum_{i=1}^{r}l_i-3r)\big)   \nonumber\\
&=&4\big(4q(n-1)+3(n-1)(n-1-s-2q)+s(n-1)+4q-r+9(n-1-s-2q)\big)\nonumber\\
&=&4\big(4qn+3(n+2)(n-1-2q)-s(2n+7)-r\big)\nonumber\\
&=&\bar{f}(G)-4r.\nonumber
\end{eqnarray*}
Now, the desired result follow from Lemmas~\ref{TG-lem} and~\ref{S4F*-lem}.
\end{proof}

Next we deal with particular spectral moments or $Q$-spectral moments of $\widetilde{F}$ and $G$.

\begin{lemma}\label{S4F**-lem} We have ${S}_4(\widetilde{F})-{S}_4(G)=8\big(|\{k'_j~:~ k'_j=4\}|-|\{k_i~:~ k_i=4\}|\big).$
\end{lemma}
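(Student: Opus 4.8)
The plan is to invoke Lemma~\ref{TG-lem}, by which $S_4(H)=2m(H)+4\varsigma_H(P_3)+8\varsigma_H(C_4)$ for every simple graph $H$. Since $\widetilde{F}$ and $G$ share the same order, size and degree sequence, the first summand $2m$ agrees for the two graphs. The quantity $\varsigma_H(P_3)$ is a degree-sequence invariant, because every $P_3$ is counted exactly once by its central vertex, giving $\varsigma_H(P_3)=\sum_{v\in V(H)}\binom{d_H(v)}{2}$; hence $\varsigma_{\widetilde{F}}(P_3)=\varsigma_G(P_3)$ as well. Both of these terms therefore cancel in the difference, reducing the statement to
$$S_4(\widetilde{F})-S_4(G)=8\big(\varsigma_{\widetilde{F}}(C_4)-\varsigma_G(C_4)\big).$$

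I would then evaluate $\varsigma_{\widetilde{F}}(C_4)$ by the same dichotomy used for $G$ in Lemma~\ref{T4G-lem}(ii): each $4$-cycle of $\widetilde{F}$ either passes through the apex $v_1$ or lies in the base $\widetilde{F}-v_1$. As $v_1$ is adjacent to all remaining vertices, the $4$-cycles through $v_1$ correspond bijectively to the copies of $P_3$ in the base (a base path $a\,b\,c$ produces the cycle $v_1\,a\,b\,c\,v_1$), so their number equals the count of degree-$2$ vertices of the base. A base vertex has degree $2$ exactly when it has degree $3$ in the cone; consequently this count is the number of degree-$3$ vertices of $\widetilde{F}$, a degree-sequence invariant equal to $n-1-2q-s$, and it therefore matches the corresponding quantity for $G$. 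The $4$-cycles avoiding $v_1$ are precisely the $C_4$-components of the base, of which there are $|\{k'_j : k'_j=4\}|$, since paths contain no cycles. This gives $\varsigma_{\widetilde{F}}(C_4)=n-1-2q-s+|\{k'_j : k'_j=4\}|$.

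Substituting the value $\varsigma_G(C_4)=n-2q-s-1+|\{k_i : k_i=4\}|$ from Lemma~\ref{T4G-lem}(ii) and subtracting yields $\varsigma_{\widetilde{F}}(C_4)-\varsigma_G(C_4)=|\{k'_j : k'_j=4\}|-|\{k_i : k_i=4\}|$, so multiplication by $8$ gives the asserted identity. The step I expect to require the most care---effectively the only non-mechanical point---is the claim that the number of $4$-cycles through $v_1$ is identical for $\widetilde{F}$ and $G$: one must see that the degree-$2$ base vertices lost when a cycle is traded for a path are exactly replaced by the internal vertices of that path, so that the apex contribution cancels and the whole discrepancy is carried by the $C_4$-components alone. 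This is precisely where the shared degree sequence is used, and a loose count of base $P_3$'s (for example, overlooking that $K_2$'s and path endpoints contribute none) would derail the bookkeeping.
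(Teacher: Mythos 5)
Your proposal is correct and follows essentially the same route as the paper: both apply Lemma~\ref{TG-lem} to reduce the claim to comparing $\varsigma(P_3)$ and $\varsigma(C_4)$, and both count the $4$-cycles of $\widetilde{F}$ by splitting them into those through the apex (one per $P_3$ in the base) and the $C_4$-components of the base. Your justification that $\varsigma(P_3)$ agrees---via the degree-sequence identity $\varsigma_H(P_3)=\sum_{v}\binom{d_H(v)}{2}$---is a slightly cleaner shortcut than the paper's explicit telescoping computation, but it is a cosmetic difference, not a different argument.
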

\begin{proof}We suppose $l'_1\ge l'_2\ge \cdots \ge l'_r \ge 3>l'_{r+1}=\cdots =l'_q=2$ ($0\le r \le q$).
From Lemma \ref{T4G-lem}, we have
   \begin{eqnarray*}
\varsigma_{\widetilde{F}}(P_3)&=&\sum_{i=1}^{z}k'_{i}+2\sum_{i=1}^{z}k'_{i}+\binom{n-1}{2}
+2\sum_{i=1}^{q}(l'_i-1)+\sum_{i=1}^{r}(l'_i-2)    \nonumber\\
&=&3\sum_{i=1}^{z}k'_{i}+\binom{n-1}{2}
+2\sum_{i=1}^{q}(l'_i-1)+n-1-s-2q-\sum_{i=1}^{z}k'_{i}   \nonumber\\
&=&3(n-s-1)-4q+\binom{n-1}{2}\nonumber\\
&=&\varsigma_{G}(P_3).\nonumber
\end{eqnarray*}
Moreover,
   \begin{eqnarray*}
\varsigma_{\widetilde{F}}(C_4)&=&\sum_{i=1}^{z}k'_i+|\{k'_j~:~ k'_j=4\}| +\sum_{i=1}^{r}(l'_i-2)    \nonumber\\
&=&\sum_{i=1}^{z}k'_i+|\{k'_j~:~ k'_j=4\}|
+n-2q-s-1-\sum_{i=1}^{z}k'_i  \nonumber\\
&=&n-2q-s-1+|\{k'_j~:~ k'_j=4\}|.    \nonumber
\end{eqnarray*}

From Lemma \ref{TG-lem}, we have ${S}_4(\widetilde{F})=2m(\widetilde{F})+4\varsigma_{\widetilde{F}}(P_3)+8\varsigma_{\widetilde{F}}(C_4)$. Together with obtained equalities and Lemma~\ref{T4G-lem}, this leads to the desired result.
\end{proof}

\begin{lemma}\label{T4F**-lem} Let $\widetilde{F}\cong K_1\vee (C_{k'_1}\cup C_{k'_2}\cup \cdots \cup C_{k'_z}\cup P_{l'_1}\cup P_{l'_2}\cup\cdots \cup P_{l'_q}\cup sK_1)$ with $l'_1\ge l'_2\ge \cdots \ge l'_r \ge 3>l'_{r+1}=l'_{r+2}=\cdots =l'_q=2$ $(0\le r \le q)$ and $G$  as stated before. Then
\begin{align*}
T_4(\widetilde{F})-T_4(G)=8\big(|\{k'_j~:~ k'_j=4\}|-|\{k_i~:~ k_i=4\}|  \big)+72\big(|\{k'_j~:~ k'_j=3\}|-|\{k_i~:~ k_i=3\}|\big)-4r. \vspace*{-0.8cm}
\end{align*}
 In particular, if $|\{k'_j~:~ k'_j=3\}|-|\{k_i~:~ k_i=3\}|=0$, then
\begin{align*}
T_4(\widetilde{F})-T_4(G)=8\big(|\{k'_j~:~ k'_j=4\}|-|\{k_i~:~ k_i=4\}| \big)-4r.
\end{align*}
\end{lemma}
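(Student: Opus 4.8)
The plan is to reduce everything to the decomposition of $T_4$ supplied by Lemma~\ref{TG-lem}, namely $T_4(H) = S_4(H) + \bar{t}(H) + \bar{f}(H) + \sum_i d_i^4 + 4\sum_i d_i^3$, and then to exploit the standing assumption that $\widetilde{F}$ and $G$ share the same order, size and degree sequence. Consequently the two degree-power sums $\sum_i d_i^4$ and $4\sum_i d_i^3$ are identical for the two graphs and drop out of the difference, leaving
$$T_4(\widetilde{F}) - T_4(G) = \big(S_4(\widetilde{F}) - S_4(G)\big) + \big(\bar{t}(\widetilde{F}) - \bar{t}(G)\big) + \big(\bar{f}(\widetilde{F}) - \bar{f}(G)\big).$$
The first bracket is already furnished by Lemma~\ref{S4F**-lem}, equal to $8(|\{k'_j : k'_j = 4\}| - |\{k_i : k_i = 4\}|)$, so it remains to evaluate the other two brackets.

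For $\bar{t}$, I would enumerate the triangles of a cone $K_1 \vee B$ over a base $B$: each is either the apex together with a base edge, or a triangle already contained in $B$ (which can only arise from a $C_3$ component). Writing $\sum_v t(v) d(v) = \sum_{\{x,y,z\}}\big(d(x)+d(y)+d(z)\big)$ over all triangles, the apex-edge triangles contribute $\sum_{uv \in E(B)}\big((n-1) + d(u) + d(v)\big)$. Since a non-apex vertex $w$ satisfies $d(w) = d_B(w) + 1$, this sum reduces to a constant (the common number of base edges times $n-1$) plus $\sum_w d_B(w)^2 + \sum_w d_B(w)$; both of these base-degree sums are determined by the degree sequence alone, hence coincide for $\widetilde{F}$ and $G$. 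Thus the apex-edge triangles cancel completely, and the only surviving contribution comes from the $C_3$ components, each an all-degree-$3$ triangle contributing $9$. Multiplying by $8$ gives $\bar{t}(\widetilde{F}) - \bar{t}(G) = 72(|\{k'_j : k'_j = 3\}| - |\{k_i : k_i = 3\}|)$, which cross-checks against Lemma~\ref{T4F*-lem} upon setting all cycle counts to zero.

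For $\bar{f} = 4\sum_{uv \in E} d(u) d(v)$, I would split the edge set into apex edges and base edges. The apex-edge contribution is $(n-1)(2m - (n-1))$, identical for both graphs. For base edges, expanding $d(u) d(v) = (d_B(u)+1)(d_B(v)+1)$ produces $\sum_{uv \in E(B)} d_B(u) d_B(v)$ together with terms (namely $\sum_w d_B(w)^2$ and the base-edge count) that again match between the two graphs. Hence only $\sum_{uv \in E(B)} d_B(u) d_B(v)$ can differ, and I would compute it directly: cycle edges contribute $4$ each, the $K_2$ edges of $G$ contribute $1$ each, while in $\widetilde{F}$ the path edges must be separated into the two end edges (product $1\cdot 2$) and the interior edges (product $2 \cdot 2$), with $P_2$ treated apart (product $1$). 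Invoking the degree-sequence identity $\sum k'_i + \sum_{i=1}^r l'_i - 2r = \sum k_i$, these contributions collapse to a net difference of $-r$, whence $\bar{f}(\widetilde{F}) - \bar{f}(G) = -4r$.

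Summing the three brackets yields the displayed formula, and the special case follows by discarding the middle term once the $C_3$-counts agree. The main obstacle is the careful bookkeeping for the path components in the $\bar{f}$ step: the degree-$1$ endpoints of the base paths make $P_2 = K_2$ behave differently from the longer paths, so the uniform ``end edge versus interior edge'' accounting breaks down for the length-one paths and one must isolate the $q - r$ copies of $P_2$. It is exactly this discrepancy, reconciled through the shared degree sequence, that produces the coefficient $-4r$ rather than a term proportional to $q$.
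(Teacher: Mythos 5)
Your proof is correct: the triangle identity $\sum_{v} t(v)d(v)=\sum_{\{x,y,z\}}\big(d(x)+d(y)+d(z)\big)$, the apex/base splitting with $d(w)=d_B(w)+1$, and the closing degree-sequence identity $\sum_j k'_j+\sum_{i=1}^{r}l'_i-2r=\sum_i k_i$ all hold, and the three brackets do sum to the stated formula. Your route shares its skeleton with the paper's proof --- both start from the decomposition $T_4=S_4+\bar{t}+\bar{f}+\sum_i d_i^4+4\sum_i d_i^3$ of Lemma~\ref{TG-lem}, cancel the degree-power sums via the common degree sequence, and import the $S_4$ difference from Lemma~\ref{S4F**-lem} --- but the core evaluation of $\bar{t}$ and $\bar{f}$ is genuinely different. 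The paper computes $\bar{t}(\widetilde{F})$ and $\bar{f}(\widetilde{F})$ explicitly in closed form (mirroring the computation of $\bar{t}(G)$ and $\bar{f}(G)$ in Lemma~\ref{T4G-lem}) and then subtracts, which involves lengthy term-by-term simplification in $n,q,s,r,z$. You never compute either quantity: you show that every apex-incident contribution to $\sum_v t(v)d(v)$ and to $\sum_{uv\in E}d(u)d(v)$ is a function of $n$, $m$ and the shared base degree sequence alone, hence cancels in the difference, leaving exactly the $C_3$-component triangles (each worth $8\cdot 9=72$) for $\bar{t}$ and the base-edge degree products for $\bar{f}$, where the end-edge versus interior-edge bookkeeping for the paths (with $P_2$ isolated) produces the $-4r$. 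Your version is less computational, makes it transparent why the difference depends only on the $C_3$-counts, the $C_4$-counts (through $S_4$) and $r$, and generalizes verbatim to any two cones over bases with equal degree sequences; the paper's explicit closed forms are heavier but reusable, serving also as the benchmark formulas behind Lemma~\ref{T4F*-lem}.
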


\begin{proof}
We suppose $k'_1\ge k'_2\ge \cdots \ge k'_{z_1}\ge 4 >k'_{z_1+1}=\cdots=k'_z=3$, where $0\le z_1\le z$.

By employing Lemma~\ref{T4G-lem}, we compute
   \begin{eqnarray*}
\bar{t}(\widetilde{F})&=&8\sum_{v\in V(\widetilde{F})}t_{\widetilde{F}}(v)d_{\widetilde{F}}(v)    \nonumber\\
&=&8\Big(6\sum_{i=1}^{z_{1}}k'_i+27(z-z_1)+4q+6\sum_{i=1}^{q}(l'_i-2)+(n-q-s-1)(n-1)\Big) \\
&=&8\big(6(n-1-s)+9(z-z_1)-8q+(n-q-s-1)(n-1)\big)   \nonumber\\
&=&8\big((n+5)(n-s-1)-q(n+7)+9|\{k'_j~:~ k'_j=3\}|\big)\nonumber\\
&=&\bar{t}(G)+72(|\{k'_j~:~ k'_j=3\}|-|\{k_i~:~ k_i=3\}|)\nonumber
\end{eqnarray*}
and
   \begin{eqnarray*}
\bar{f}(\widetilde{F})&=&4\sum_{uv\in E(\widetilde{F})}d_{\widetilde{F}}(u)d_{\widetilde{F}}(v)   \nonumber\\
&=&4\Big( 9\sum_{i=1}^{z}k'_i+3(n-1)\sum_{i=1}^{z}k'_i+4q(n-1)+3(n-1)\sum_{i=1}^{q}(l'_i-2) \nonumber\\
&&+s(n-1)+12r+4(q-r)+9\sum_{i=1}^{r}(l'_i-3) \Big)   \nonumber\\
&=&4\big(4qn-r+3(n+2)(n-2q-s-1)+s(n-1)\big)\nonumber\\
&=&4\big(4qn+3(n+2)(n-2q-1)-s(2n+7)-r\big)\nonumber\\
&=&\bar{f}(G)-4r.\nonumber
\end{eqnarray*}
The result follows from Lemmas \ref{TG-lem} and \ref{S4F**-lem}.
\end{proof}

\section{Proof  of Theorem~\ref{11t}}\label{sec5}
 We quote the  known result.
\begin{lemma}[\cite{Ye2025Signless}] \label{PathCycle-lem} Let $H\cong K_1\vee (P_l \cup H_1)$, where $l\ge 4$ and $H_1$ is any multigraph.
\begin{itemize}
\item[(i)] If $l=4$, then
$$\chi_1(H)<\chi_1(K_{1}\vee (C_{2}\cup K_{2}\cup H_1)),$$
where $C_2$ is the digon;
\item[(ii)] If $l\ge 5$, then
$$\chi_1(H)<\chi_1(K_{1}\vee (C_{r}\cup P_{l-r}\cup H_1))$$ holds for $3\leq r\leq l-2.$
\end{itemize}
\end{lemma}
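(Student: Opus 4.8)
The plan is to resist reading this as an edge-addition statement, because it is not one. Passing from $P_l$ to $C_r\cup P_{l-r}$ deletes the edge joining the two blocks and inserts the chord that closes the cycle, so $H$ and the modified cone have the same order, the same size, and in fact the same degree sequence (two vertices of degree $1$ and the rest of degree $2$ in the base, in both cases). Hence Lemma~\ref{DeleteEdge-lem} yields nothing, and a direct Rayleigh-quotient comparison also fails: for a path hung from the apex the Perron entries are largest in the interior and smallest at the ends, so re-using the eigenvector of $H$ after the swap would \emph{decrease} the quadratic form. I would therefore compare the two spectral radii through a scalar (Schur-complement) equation.

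Writing $H=K_1\vee B$ with $B$ on $n-1$ vertices, the apex has degree $n-1$, so $Q(H)$ is the bordered matrix with corner $n-1$, border $\mathbf{1}$ and interior block $Q(B)+I$. Taking the Schur complement of the interior block shows that, for $\rho>1+\chi_1(B)$, $\rho$ is a $Q$-eigenvalue of $H$ exactly when $\rho-(n-1)=g_B(\rho)$, where $g_B(\rho)=\mathbf{1}^{\intercal}\big((\rho-1)I-Q(B)\big)^{-1}\mathbf{1}$. A Perron--Frobenius argument (together with Lemma~\ref{DeleteVertex-lem}, using that $H$ is connected and properly contains $K_1\vee H_1$) gives $\chi_1(H)>1+\chi_1(B)$, so $\chi_1(H)$ is the largest root of this equation and lies above every pole of $g_B$; the same description holds for the modified cone $H'=K_1\vee B'$, with the identical left-hand side $\rho-(n-1)$ since the orders agree.

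Since $B$ and $B'$ share the common summand $H_1$ and $g$ is additive over connected components, $g_B-g_{B'}=g_{P_l}-g_{C_r}-g_{P_{l-r}}$ (in case (i), $g_{P_4}-g_{C_2}-g_{K_2}$). Now $\rho\mapsto \rho-(n-1)-g_{B'}(\rho)$ is strictly increasing and tends to $+\infty$ above the top pole of $g_{B'}$, so to obtain $\chi_1(H')>\chi_1(H)$ it suffices to show that this function is negative at $\rho^*=\chi_1(H)$; using $\rho^*-(n-1)=g_B(\rho^*)$, this is exactly the inequality $g_{P_l}(\rho^*)<g_{C_r}(\rho^*)+g_{P_{l-r}}(\rho^*)$, which I would in fact establish for every $\rho>5$.

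The computational core, and the place where the difficulty really sits, is the closed evaluation of these scalars: because the two sides agree to high order in $1/\rho$ (equal order, size and degree sequence), no crude estimate can separate them. Circulant symmetry gives the clean identity $g_{C_r}(\rho)=\tfrac{r}{\rho-5}$ (valid also for the digon $C_2$), while solving $\big((\rho-1)I-Q(P_l)\big)y=\mathbf{1}$ with the symmetric ansatz $y_i=\tfrac{1}{\rho-5}+\lambda(\nu^i+\nu^{\,l+1-i})$, where $\nu>1$ and $\nu+\nu^{-1}=\rho-3$, yields $g_{P_l}(\rho)=\tfrac{1}{\rho-5}\big(l-\tfrac{4\nu(\nu^l-1)}{(\nu^2-1)(\nu^l+1)}\big)$. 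Substituting, the leading parts cancel and the remainder factors as $g_{P_l}-g_{P_{l-r}}-g_{C_r}=-\tfrac{4\nu}{(\rho-5)(\nu^2-1)}\big(\varphi(\nu^l)-\varphi(\nu^{\,l-r})\big)$ with $\varphi(x)=\tfrac{x-1}{x+1}$ strictly increasing; since $\nu>1$ forces $\nu^l>\nu^{\,l-r}$, this remainder is strictly negative for all $\rho>5$, and in particular at $\rho^*=\chi_1(H)>n>5$. This settles (ii), and (i) is precisely its $r=2$ (digon) instance.
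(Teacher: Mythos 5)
Your proof cannot be compared against an in-paper argument: the paper imports Lemma~\ref{PathCycle-lem} from the submitted manuscript \cite{Ye2025Signless} and gives no proof of it, so I can only judge your proposal on its own merits. Judged that way, it is correct, and I verified the key computations. The bordered-matrix/Schur-complement characterization is sound: for $\rho$ above $1+\chi_1(B)$ (which $\chi_1(K_1\vee B)$ exceeds, since $Q(B)+I$ is a proper principal submatrix of the irreducible nonnegative matrix $Q(K_1\vee B)$), the largest $Q$-eigenvalue is the unique root of $\rho-(n-1)=g_B(\rho)$, and $g$ is additive over components. Your closed forms check out: $g_{C_r}(\rho)=r/(\rho-5)$ (also for the digon, since $Q(C_2)\mathbf{1}=4\mathbf{1}$), and your path formula reproduces, e.g., $g_{K_2}(\rho)=2/(\rho-3)$ and the correct value of $g_{P_4}$; the telescoped difference $g_{P_l}-g_{P_{l-r}}-g_{C_r}=-\tfrac{4\nu}{(\rho-5)(\nu^2-1)}\bigl(\varphi(\nu^{l})-\varphi(\nu^{l-r})\bigr)$ with $\varphi(x)=\tfrac{x-1}{x+1}$ is algebraically right and strictly negative for $\rho>5$. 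One point you leave implicit and should state explicitly: to conclude from $h_{B'}(\rho^*)<0$ that $\rho^*<\chi_1(H')$, you need $\rho^*$ to lie above \emph{every} pole of $g_{B'}$, i.e.\ $\rho^*>1+\chi_1(B')=\max\{5,\,1+\chi_1(H_1)\}$; this does follow from what you already have ($\rho^*>1+\chi_1(B)\ge 1+\chi_1(H_1)$ together with $\rho^*>5$), but ``$\rho^*>5$'' alone is not the full domain condition when $H_1$ has large degrees. Also, your side remark that $\rho^*>n>5$ needs $H_1$ nonempty; $\rho^*>n\ge 5$ plus strictness of the Perron comparison with the spanning star is what actually holds, and it suffices. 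With these two sentences added, the argument is complete and, notably, self-contained where the paper relies on an external reference.
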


Henceforth, $G$ is  as in the formulation of Theorem~\ref{11t}, with $k\ge 4$, and $F$ is a simple graph  that is $Q$-cospectral with~$G$. Their common order is $n$.

On the basis of Lemma~\ref{SQG-lem} and Remark \ref{r3.2}, we deduce the following setting: $$\chi_1(F)>n>5>\chi_2(F)>1>\chi_n(F)>0.$$
We prove the following lemma comparing the structure of $F$ and $G$.

\begin{lemma}\label{l5.1} If $n\ge 21$ and $k\ge 4$, then $F$ and $G$ share the same vertex degrees and $\varsigma_F(C_3)=\varsigma_G(C_3)$.
\end{lemma}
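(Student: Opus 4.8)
The plan is to combine the eigenvalue localisation already recorded for $F$ with the first few $Q$-spectral moments. Since $F$ is $Q$-cospectral with $G$, the two graphs share every moment $T_r$, and by Lemma~\ref{TG-lem} the values $T_1,T_2,T_3$ translate into identities relating the degree sequence and the triangle count of $F$ to those of $G$. The preliminary localisation $\chi_1(F)>n>5>\chi_2(F)>1>\chi_n(F)>0$ lets me invoke Lemma~\ref{d1-lem}: $F$ is connected, $d_2(F)\le 4$, and $d_1(F)=d_1(G)$. As $G$ is a cone, $d_1(G)=n-1$, so the maximum-degree vertex $v_1$ of $F$ is adjacent to all the others; writing $H:=F-v_1$ gives $F\cong K_1\vee H$ with $\Delta(H)\le 3$, and every non-apex vertex of $F$ has degree in $\{1,2,3,4\}$.

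Next I would read off the degree data from $T_1$ and $T_2$. From $T_1(F)=T_1(G)$ we get $m(F)=m(G)$, hence, counting the $n-1$ edges at $v_1$ separately, $m(H)=k+q$, which is independent of any further structure. From $T_2(F)=T_2(G)$ we get $\sum_i d_i^2(F)=\sum_i d_i^2(G)$. Letting $n_r$ denote the number of non-apex vertices of degree $r$ and combining the vertex count $n_1+n_2+n_3+n_4=n-1$ with the two moment identities, the resulting linear system has one degree of freedom; solving it yields $n_1=s-n_4$, $n_2=2q+3n_4$, $n_3=k-3n_4$, parametrised by $n_4\ge 0$. In particular $\sum_i d_i^3(F)$ is an affine function of $n_4$ whose value at the $G$-configuration ($n_4=0$) is increased by exactly $6$ per unit of $n_4$.

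The decisive step is $T_3$. Since $\sum_i d_i^2$ is already pinned down, $T_3(F)=T_3(G)$ reduces via Lemma~\ref{TG-lem} to $6\varsigma_F(C_3)+\sum_i d_i^3(F)=6\varsigma_G(C_3)+\sum_i d_i^3(G)$, and the slope computation above gives $\varsigma_F(C_3)=\varsigma_G(C_3)-n_4=k+q-n_4$ (here $\varsigma_G(C_3)=k+q$, using $k\ge 4$ so that $C_k$ contributes no triangle). On the other hand, because $v_1$ dominates $F$, each edge of $H$ spans a triangle with $v_1$, whence $\varsigma_F(C_3)=m(H)+\varsigma_H(C_3)\ge m(H)=k+q$. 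Comparing the two expressions forces $n_4\le 0$, hence $n_4=0$ (and $\varsigma_H(C_3)=0$). Consequently the degree sequence of $F$ coincides with that of $G$ and $\varsigma_F(C_3)=k+q=\varsigma_G(C_3)$, which is exactly the claim.

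I expect the main obstacle to be conceptual rather than computational: isolating the observation that the dominating vertex forces the lower bound $\varsigma_F(C_3)\ge m(H)$, which, set against the moment-dictated value $k+q-n_4$, annihilates the spurious degree of freedom $n_4$ in a single stroke. The rest is bookkeeping—checking that the localisation hypotheses of Lemma~\ref{d1-lem} hold (for which $n\ge 21$ is more than enough) and verifying the edge and triangle counts of $G$. Should the triangle bound be deemed too slick, a fallback is to first reduce the parameter to $n_4\le 1$ via Lemma~\ref{n4-lem} and then discard $n_4=1$ using the same triangle comparison (or, if needed, a fourth-moment computation), but the direct argument delivers both conclusions of the lemma at once.
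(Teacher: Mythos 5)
Your proof is correct, and it takes a genuinely different, leaner route than the paper's. Both arguments open identically: Lemma~\ref{d1-lem} gives that $F$ is connected with $d_1(F)=d_1(G)=n-1$ and $d_2(F)\le 4$, and the moment identities of Lemma~\ref{TG-lem} yield the one-parameter family $n_1(F)=s-n_4$, $n_2(F)=2q+3n_4$, $n_3(F)=k-3n_4$, together with $\varsigma_F(C_3)=\varsigma_G(C_3)-n_4$ (your ``$+6$ per unit of $n_4$'' slope in $\sum d_i^3$ is exactly the computation behind the paper's equality~\eqref{e4.3}). From there the routes diverge. The paper first caps $n_4(F)\le 1$ via Lemma~\ref{n4-lem} (this is where $n\ge 21$ enters), and then, in the case $n_4(F)=1$, classifies the components of $F-v_1$ as $Z_4$, $Z_5$ or paths using Lemmas~\ref{DeleteEdge-lem}, \ref{n4-lem} and~\ref{leq5-lem-2}, so as to compute $\varsigma_F(C_3)=n-s-q-1$ exactly and contradict~\eqref{e4.3}. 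You instead observe that, since $v_1$ dominates $F$, every edge of $H=F-v_1$ spans a triangle with $v_1$, whence $\varsigma_F(C_3)\ge m(H)=m(G)-(n-1)=k+q=\varsigma_G(C_3)$ (the last equality needing only $k\ge 4$), and this single inequality kills every value $n_4\ge 1$ at once, with no structural analysis and no appeal to Lemma~\ref{n4-lem}. Your approach buys brevity and a weaker order hypothesis ($n\ge 16$, which is all Lemma~\ref{d1-lem} requires, would suffice); in effect you noticed that the paper's exact triangle count in the bad case, which also equals $k+q$, is only needed as a lower bound, and that lower bound comes for free from the dominating vertex. Nothing is lost by skipping the structural work here, since the classification of the components of $F-v_1$ is re-derived in Theorem~\ref{t5.1}, where it is genuinely needed.
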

\begin{proof} From Lemma~\ref{d1-lem}, we know that $F$ is connected with $d_{1}(F)=n-1$ and $d_{2}(F)\le 4$ (this equality will be frequently used) when $n\ge 16$. According to the convention of Section~\ref{sec2},  $v_1$ denotes a vertex with maximum degree in $F$.  	By inserting $n_4(G)=0$, $n_3(G)=k=n-1-2q-s$, $n_2(G)=2q$, $n_1(G)=s$  and  $d_1(G)=d_{1}(F)=n-1$ in Lemma~\ref{TG-lem}, we arrive at
	\begin{equation}\label{e4.1}\left\{
		\begin{array}{ll}  n_{1}(F)=s-n_{4}(F),\\
			               n_{2}(F)=2q+3n_{4}(F),\\
                            n_{3}(F)=n-1-2q-s-3n_{4}(F).
		\end{array}
		\right.\end{equation}
By Lemma \ref{n4-lem}, we find $n_{4}(F)\le 1$. If $n_{4}(F)=0$, then by \eqref{e4.1}, $F$ and $G$ share the same degrees. Moreover, $\varsigma_F(C_3)=\varsigma_G(C_3)$ follows from Lemma \ref{TG-lem}.

Let $n_{4}(F)=1$ and denote by $w$ the unique $4$-vertex in $F$. From \eqref{e4.1}, we have
    \begin{equation*}\label{e4.2}\left\{
		\begin{array}{ll}  n_{1}(F)=s-1,\\
			               n_{2}(F)=2q+3,\\
                            n_{3}(F)=n-2q-s-4=k-3.
		\end{array}
		\right.\end{equation*}

Then by Lemma \ref{TG-lem}, we have
\begin{align}\label{e4.3}
\varsigma_G(C_3)=\varsigma_F(C_3)+1.
\end{align}

By Lemma \ref{n4-lem},  the $4$-vertex $w$ is adjacent to at most one 3-vertex. Together with Lemmas~\ref{DeleteEdge-lem} and \ref{leq5-lem-2}, this structural observation leads to the conclusion that every component of  $F-v_1$ is isomorphic to either $Z_r~(4\le r\le 5)$ or $P_l~(l\ge 1)$. Moreover, we conclude the following:
\begin{itemize}
	\item[(a)] From $n_{4}(F)=1$, we deduce that there is exactly one copy of $Z_r~(4\le r\le 5)$ in $F-v_1$. It gives rise to three $2$-vertices in $F$.
	
	\item[(b)] From $n_{2}(F)=2q+3$ and  (a), we deduce that there are  exactly $q$ disjoint paths with length at least $2$ in $F-v_1$. We denote them by $P_{l_1}, P_{l_2}, \ldots, P_{l_q}~(l_i\ge 2)$.
	
	\item[(c)] Moreover, $n_{1}(F)=s-1$ implies the existence of $s-1$ copies of $P_1$ in $F-v_1$.
\end{itemize}

Note that $\sum_{i=1}^{q}l_i=n-s-r$. From (a), (b) and (c), we deduce  $$\varsigma_F(C_3)= (r-1)+\sum_{i=1}^{q}(l_i-1)=n-s-q-1.$$  On the other hand, since $k\ge 4$, we have $$\varsigma_G(C_3)=n-2q-s-1+q=n-s-q-1.$$ Therefore, $\varsigma_G(C_3)\ne \varsigma_F(C_3)+1$, which contravenes the  equality~\eqref{e4.3}. This completes the proof.
\end{proof}

The next theorem plays a crucial role in this section.

\begin{theorem}\label{t5.1} For $n\ge 21$, the following statements hold true.
 \begin{itemize}
  \item[(i)]If $4\le k\le 5$, then $F\cong G$ (which means that $G$ is DQS);
  \item[(ii)]If $k\ge 6$ and $q=1$, then $F\cong G$ (which means that $G$ is DQS);
  \item[(iii)]If $k\ge 6$ and $q\ge 2$, then $$F\cong G \quad \text{or} \quad F\cong K_1\vee (C_{4}\cup P_{k-3}\cup P_{3}\cup (q-2)K_2 \cup sK_1).$$
 \end{itemize}
\end{theorem}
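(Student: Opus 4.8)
The plan is to begin from Lemma~\ref{l5.1}: $F$ and $G$ have the same degree sequence and satisfy $\varsigma_F(C_3)=\varsigma_G(C_3)$. In particular $F$ has a vertex $v_1$ of degree $n-1$ (hence adjacent to all others), and its remaining degrees are $k$ threes, $2q$ twos and $s$ ones. Setting $H=F-v_1$, every vertex of $H$ has degree one less than in $F$, so $H$ has maximum degree at most $2$ and is a disjoint union of cycles, paths and isolated vertices. Counting degrees shows that $H$ has exactly $s$ isolated vertices and exactly $q$ nontrivial paths (whose $2q$ endpoints are the degree-$1$ vertices of $H$), while the cycle vertices together with the internal path vertices account for the $k$ degree-$2$ vertices of $H$. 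I write $z$ for the number of cycles, $c_3,c_4$ for the numbers of triangles and quadrilaterals among them, and $r$ for the number of paths on at least three vertices.

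First I would determine the cyclic part of $H$. Because $\chi_2(F)<5$ and $v_1$ is adjacent to every vertex of $H$, two vertex-disjoint cycles in $H$ would create the forbidden subgraph $K_1\vee(C_{r_1}\cup C_{r_2})$ (Lemma~\ref{leq5-lem-2}), so $z\le 1$. Moreover, each edge of $H$ forms a triangle with $v_1$ while the only triangles inside $H$ are its $C_3$-components, whence $\varsigma_F(C_3)=(k+q)+c_3$; comparing with $\varsigma_G(C_3)=k+q$ gives $c_3=0$. If $z=0$ then $F$ is a cone $K_1\vee(P_{l_1}\cup\cdots\cup P_{l_q}\cup sK_1)$, i.e.\ the graph $F^*$ of Section~\ref{sec4} (and $k\ge 4$ forces a path on at least three vertices), so Lemma~\ref{T4F*-lem} gives $T_4(F)<T_4(G)$, contradicting cospectrality; hence $z=1$ and $H$ has a single cycle $C_{k'}$ with $k'\ge 4$. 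For $z=1$, $F$ is a graph of the type $\widetilde F$ of Section~\ref{sec4}, so $T_4(F)=T_4(G)$ together with Lemma~\ref{T4F**-lem} (using $c_3=0$) yields the key identity $r=2\bigl(c_4-|\{k_i:k_i=4\}|\bigr)$, where $|\{k_i:k_i=4\}|=1$ when $k=4$ and $0$ otherwise.

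With these constraints, cases (i) and (ii) follow quickly; recall $c_4\le z=1$. When $k=4$ the identity reads $r=2(c_4-1)$, so $r\ge 0$ forces $c_4=1$ and $r=0$; then $C_{k'}=C_4$, all paths are copies of $K_2$, and $F\cong K_1\vee(C_4\cup qK_2\cup sK_1)=G$. When $k\ge 5$ the identity reads $r=2c_4$: if $c_4=0$ then $r=0$, every path is a $K_2$, the cycle has $k'=k$ vertices, and again $F\cong G$. If $c_4=1$ then $C_{k'}=C_4$ and $r=2$; this cannot occur for $k=5$ (two paths on at least three vertices need at least two internal vertices, but only $k-4=1$ is available) nor for $q=1$ (since $r\le q$), which completes (i) and (ii). The sole surviving configuration is case (iii) with $c_4=1$, namely $F\cong K_1\vee(C_4\cup P_{l_1}\cup P_{l_2}\cup(q-2)K_2\cup sK_1)$ with $l_1\ge l_2\ge 3$ and $l_1+l_2=k$.

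The remaining and principal difficulty is to show that this configuration is spectrally admissible only when $\{l_1,l_2\}=\{k-3,3\}$. Here I would use $\chi_1(F)=\chi_1(G)$ together with Lemma~\ref{PathCycle-lem}, which gives, for any path on $l\ge 4$ vertices, $\chi_1(K_1\vee(P_l\cup H_1))<\chi_1(K_1\vee(C_{\max(2,l-2)}\cup K_2\cup H_1))$ (part (i) for $l=4$, and part (ii) with splitting parameter $l-2$ for $l\ge 5$); such a replacement leaves the order, size, degree sequence and the parameters $q,s$ unchanged. Suppose $l_2\ge 4$, so both paths have at least four vertices. Applying the replacement to $P_{l_1}$ and to $P_{l_2}$ turns $F$ into $\widehat F\cong K_1\vee(C_4\cup C_{l_1-2}\cup C_{l_2-2}\cup qK_2\cup sK_1)$ (digons allowed), with $\chi_1(\widehat F)>\chi_1(F)=\chi_1(G)$. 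But $\widehat F$ and $G$ share the same order and the same $q,s$, so Corollary~\ref{GG*-coro} yields $\chi_1(\widehat F)=\chi_1(G)$, a contradiction. Hence $l_2=3$ and $l_1=k-3$, which is exactly the exceptional graph. I expect this last step to be the crux: the delicate point is that $P_3$ cannot be replaced, so one is forced to reduce both long paths simultaneously, and directly into cycles-plus-edges, in order to reach a path-free cone to which the order-independence of $\chi_1$ in Corollary~\ref{GG*-coro} can be applied.
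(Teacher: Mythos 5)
Your proof is correct and takes essentially the same route as the paper's: Lemma~\ref{l5.1} plus Lemma~\ref{leq5-lem-2} to reduce $F-v_1$ to at most one cycle together with paths and isolated vertices, Lemmas~\ref{T4F*-lem} and~\ref{T4F**-lem} to eliminate the cycle-free case and to force either $F\cong G$ or the $C_4$-plus-two-long-paths configuration, and Lemma~\ref{PathCycle-lem} combined with Corollary~\ref{GG*-coro} for the final replacement argument forcing $\{l_1,l_2\}=\{k-3,3\}$. The only cosmetic difference is that you package the fourth-moment comparison into the single identity $r=2\bigl(c_4-|\{k_i : k_i=4\}|\bigr)$ and case-split on it, whereas the paper handles $k=4$ by direct vertex counting and $k=5$ by a one-off $T_4$ check.
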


\begin{proof} By Lemma \ref{l5.1}, $d_{1}(F)=n-1$, so we may suppose $d_F(v_1)=n-1$. The co-degree condition  also leads to the conclusion that every component of  $F-v_1$ is isomorphic to either $C_p~(3\le p \le k)$ or $P_l~(l\ge 1)$. Besides, since $\chi_2(F)<5$, $F$ cannot contain $K_1\vee (C_{p_1}\cup C_{p_2})$ as a subgraph by Lemma \ref{leq5-lem-2}. Hence, at most one component of  $F-v_1$ is a cycle. Moreover, $n_{2}(F)=2q$ and  $n_{1}(F)=s$ imply that there are exactly $q$ disjoint paths with length at least 2  and $s$ copies of $K_1$ in  $F-v_1$. Therefore, the only structural possibilities for $F$ are $$K_1\vee (C_{p}\cup P_{l_1}\cup P_{l_2}\cup\cdots \cup P_{l_q}\cup sK_1) \quad \text{or} \quad K_1\vee (P_{l_1}\cup P_{l_2}\cup\cdots \cup P_{l_q}\cup sK_1)$$ where $3\le p\le k$ and $l_1\ge l_2\ge \cdots \ge l_r \ge 3>l_{r+1}=\cdots =l_q=2$ $(0\le r \le q)$. Noting that $\varsigma_G(C_3)=\varsigma_F(C_3)$ (by Lemma \ref{l5.1}) and $k\ge 4$, we find $p\ne 3$.  Moreover,  by Lemma \ref{T4F*-lem}, we know that the latter graph is not $Q$-cospectral with $G$. Hence, \begin{align}\label{e4.4}
F\cong G \quad \text{or} \quad F\cong K_1\vee (C_{p}\cup P_{l_1}\cup P_{l_2}\cup\cdots \cup P_{l_q}\cup sK_1).
\end{align}

 If $k=4$, we immediately obtain $F\cong G$. If $k=5$ and $F\not\cong G$, then $F\cong K_1\vee (C_4\cup P_3\cup (q-1)K_2\cup sK_1).$ However, this implies $T_4(G)\ne T_4(F)$ by Lemma \ref{T4F**-lem}, which is impossible. Thus $F\cong G$.

Finally, we deal with the case  $k\ge 6$. Combining \eqref{e4.4} and Lemma \ref{T4F**-lem}, we find  $F\cong G$ if $q=1$. For $q\ge 2$, we eventually have
\begin{align*}
F\cong K_1\vee (C_{4}\cup P_{l_1}\cup P_{l_2}\cup (q-2)K_2 \cup sK_1),
\end{align*}
 where $l_1+l_2=k\ge 6$ and $l_1,l_2\ge 3$. Next we claim that $l_1=3$ or $l_2=3$. Otherwise by Lemmas \ref{PathCycle-lem} and \ref{GG*-lem},
 $$\chi_1(K_1\vee (C_{4}\cup P_{l_1}\cup P_{l_2}\cup (q-2)K_2 \cup sK_1))<
 \chi_1(K_1\vee (C_{4}\cup C_{l_1-2}\cup C_{l_2-2}\cup qK_2 \cup sK_1))=\chi_1(G),
 $$
violating the assumption on $Q$-cospectrality. This completes the proof.
\end{proof}

We denote $$\widetilde{F}\cong K_1\vee (C_{4}\cup P_{k-3}\cup P_{3}\cup (q-2)K_2 \cup sK_1), \, \text{where} \,\, k\ge 6 \,\, \text{and}\,\, q\ge 2,$$
and recall that $m_{G}(I^*)$ denotes the number of the eigenvalues in the interval $I^*$ in~$S_Q(G)$ and $m_{G}(\rho)$ denotes the multiplicity of the eigenvalue~$\rho$ in~$S_Q(G)$.

Now, we prove the theorem.

\medskip\noindent{\em \textbf{Proof of  Theorem \ref{11t}.}} According to  above established arguments, we suppose that $k\ge 6$ and $q\geq 2$. We also set $n\geq 21$ (as in the formulation of the theorem). By Theorem~\ref{t5.1}, it is sufficient to  prove that the  graph $F$, introduced at the beginning of this section, is isomorphic to $G$ if $k$ is odd. For a contradiction, we suppose that $F\not \cong G$. Then by Theorem~\ref{t5.1}, we have $F \cong \widetilde{F}$. The vertex with degree $n-1$ in $\widetilde{F}$ is denoted by~$v_1$.

By Lemma~\ref{mul-0-lem}, $m_{\widetilde{F}-v_1}(0)=3+q-2+s=q+s+1$. Then by Lemma \ref{DeleteVertex-lem}, we get $$m_{\widetilde{F}}((0,1])\ge q+s+1.$$
On the other hand, since $k$ is odd, by Lemma \ref{SQG-lem} and Remark \ref{r3.2}, we know $$ m_{G}((0,1])=s-1+q+1=q+s<m_{\widetilde{F}}((0,1]).$$ Therefore, $G$ and $\widetilde{F}$ are not $Q$-cospectral, and this contraction  completes the proof.
\qed

\section{Proof  of Theorem~\ref{12t}}\label{sec6}

Let $G$ be as in the formulation of Theorem~\ref{12t}. As before, $F$ stands for a  graph $Q$-cospectral with~$G$. Also, Lemma~\ref{SQG-lem} and Remark \ref{r3.2} ensure the following: $$\chi_1(F)>n>5=\chi_2(F)>1>\chi_n(F)>0.$$
From Lemma~\ref{d1-d2-lem}, we know that when $n\ge 12$, $F$ is connected with $d_{2}(F)\le 4$ and $d_{1}(F)\ge n-3$.

We proceed by proving the following lemma concerning the largest vertex degrees. A more refined statement is given in the subsequent lemma.

\begin{lemma}\label{l6.1} If $n\ge 33$, then $d_{1}(F)=d_1(G)=n-1$.
\end{lemma}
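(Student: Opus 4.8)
The plan is to pin down the maximum degree of $F$ in two stages: first establish $d_1(F)\ge n-2$, then exclude the value $n-2$. Recall that $d_1(G)=n-1$ (the apex of the cone), and that by Lemma~\ref{d1-d2-lem} the graph $F$ is connected with $d_2(F)\le 4$ and $d_1(F)\ge n-3$, while $\chi_1(F)>n$. For the first stage I would apply Lemma~\ref{23l} to $F$: since $d_1(F)\ge n-3\ge 30$ and $d_n(F)\le d_2(F)\le 4$, the hypotheses hold (whether $d_n(F)=1$ or $d_n(F)\ge 2$), so $\chi_1(F)\le d_1(F)+3$. Combined with $\chi_1(F)>n$ this gives $d_1(F)>n-3$, i.e. $d_1(F)\ge n-2$.

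The crux is to rule out $d_1(F)=n-2$. Assume it holds and let $v_1$ be the vertex of degree $n-2$; then $v_1$ has a unique non-neighbour $u$, every other vertex lies in $N(v_1)$, and all neighbours of $u$ lie in $N(v_1)$. Since every vertex other than $v_1$ has degree in $\{1,2,3,4\}$, let $n_j$ count those of degree $j$. $Q$-cospectrality together with Lemma~\ref{TG-lem} yields $m(F)=m(G)$ (from $T_1$), $\sum_i d_i(F)^2=\sum_i d_i(G)^2$ (from $T_2$), and hence $6\varsigma_F(C_3)+\sum_i d_i(F)^3=6\varsigma_G(C_3)+\sum_i d_i(G)^3$ (from $T_3$). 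Solving the linear system formed by $T_1$, $T_2$ and $n_1+n_2+n_3+n_4=n-1$ expresses $\sum_{i\ge 2}d_i(F)^3$ as a linear function of $n_4$; substituting into the $T_3$ identity and using $\varsigma_G(C_3)=n-1-q-s$ collapses everything to the single relation
\[
\varsigma_F(C_3)+n_4=\frac{n^2-5n+10}{2}-q-s .
\]

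To finish I would count the triangles of $F$ structurally. Triangles through $v_1$ correspond to edges of $F-v_1$ lying inside $N(v_1)$; since $m(F)=2(n-1)-q-s$ and $u$ accounts for $\deg(u)$ of the $n-q-s$ edges avoiding $v_1$, there are exactly $n-q-s-\deg(u)$ of them. Triangles avoiding $v_1$ live in $F-v_1$, whose maximum degree is at most $4$, so a double count ($3\tau=\sum_w t_{F-v_1}(w)\le 3(n-2)+6$) bounds their number by $\tau\le n$. Writing $\varsigma_F(C_3)=(n-q-s-\deg(u))+\tau$ in the displayed relation makes $q$ and $s$ cancel and gives
\[
n_4=\frac{(n-2)(n-5)}{2}+\deg(u)-\tau .
\]
As $\deg(u)\ge 1$ and $\tau\le n$, this forces $n_4\ge\tfrac{(n-2)(n-5)}{2}-n+1$, which exceeds $n-1$ once $n\ge 10$; but $n_4\le n-1$, a contradiction for $n\ge 33$. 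Hence $d_1(F)=n-1=d_1(G)$.

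The main obstacle is precisely this second stage. The delicate point is that neither the degree-sequence moments alone nor the eigenvalue-$5$ count alone excludes $d_1(F)=n-2$; the decisive ingredient is the third $Q$-spectral moment, which ties the number of degree-$4$ vertices to the triangle count and forces $n_4$ to grow quadratically in $n$, whereas the cap $d_2(F)\le 4$ limits $F$ to only $O(n)$ triangles. A pleasant feature is that $q$ and $s$ cancel in the key identity, so the resulting bound is uniform and comfortably met for $n\ge 33$.
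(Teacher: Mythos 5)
Your proof is correct, and while its first stage coincides with the paper's, the second (and crucial) stage takes a genuinely different route. Stage one — combining Lemma~\ref{d1-d2-lem} with Lemma~\ref{23l} and $\chi_1(F)>n$ to get $d_1(F)\ge n-2$ — is exactly the paper's opening move, and your linear system from $T_1$, $T_2$ and the vertex count, parametrized by $n_4$, reproduces the paper's system \eqref{e62}. From there the paper does \emph{not} use $T_3$: it extracts from \eqref{e62} the single relation $n_2(F)+n_3(F)=3-s$, concludes $s\in\{1,2,3\}$, and eliminates each resulting subcase by an eigenvalue-multiplicity argument — the pendant vertices of $F$ all hang on $v_1$, so Lemma~\ref{eigenvalue1-lem} forces $m_F(1)\ge n_1(F)-2$, which outgrows the bound $m_G(1)\le s+q-1+(n-1-2q-s)/4$ coming from Remark~\ref{r3.2}. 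You instead bring in the third moment, convert it into the identity $\varsigma_F(C_3)+n_4=\tfrac{n^2-5n+10}{2}-q-s$, and play it against a structural triangle count: exactly $n-q-s-\deg(u)$ triangles pass through $v_1$, and at most $n$ avoid it because $F-v_1$ has maximum degree at most $4$; hence $n_4=\tfrac{(n-2)(n-5)}{2}+\deg(u)-\tau$ would have to be quadratic in $n$, contradicting $n_4\le n-1$. I verified both the moment identity and the counts; they are right, including the cancellation of $q$ and $s$. Your route buys two things: it dispenses with the case analysis entirely, and it yields the contradiction already for $n\ge 14$ (stage one needs $d_1(F)\ge n-3\ge 11$, the final inequality only $n\ge 10$), whereas in the paper it is precisely Subcase 1.3 that forces the hypothesis $n\ge 33$ in this lemma. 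Your argument is also methodologically closer to the paper's own proof of Lemma~\ref{l6.2}, which likewise plays $T_3$ against structural triangle counting; the paper's treatment of the present lemma instead recycles the eigenvalue-$1$ machinery and thereby avoids having to bound the triangles of $F-v_1$ at all.
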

\begin{proof} By Lemma~\ref{d1-d2-lem}, we have $d_{1}(F)\ge n-3\ge 11$, when $n\ge 14$. By Lemma \ref{23l}, we have $n<\chi_1(F)\le d_{1}(F)+3$, and thus $d_{1}(F)\ge n-2$. Suppose that $d_{1}(F)=n-2$.
	
	Lemma \ref{TG-lem} yields the following system of equations:
	\begin{equation}\label{e61}\left\{
		\begin{array}{ll}\vspace{0.35cm}
n_{1}(F)+n_{4}(F)=n_1(G)+n_4(G)+\frac{d_1(G)(d_1(G)-5)}{2}-\frac{d_{1}(F)(d_{1}(F)-5)}{2},\\\vspace{0.35cm}
n_{3}(F)+3n_{4}(F)=n_3(G)+3n_4(G)+\frac{d_1(G)(d_1(G)-3)}{2}-\frac{d_{1}(F)(d_{1}(F)-3)}{2},\\\vspace{0.2cm}
n_{2}(F)-3n_{4}(F)=n_2(G)-3n_4(G)-d_1(G)(d_1(G)-4)+d_{1}(F)(d_{1}(F)-4).\\
		\end{array}
		\right.\end{equation}

	By inserting $d_{1}(F)=n-2$, $d_1(G)=n-1$, $n_4(G)=0$,  $n_3(G)=n-1-2q-s$, $n_2(G)=2q$ and $n_1(G)=s$ in \eqref{e61}, we get
	\begin{equation}\label{e62}\left\{
		\begin{array}{ll}  n_{1}(F)=n+s-4-n_{4}(F)\\
			               n_{2}(F)=2q-2n+7+3n_{4}(F),\\
                            n_{3}(F)=2n-2q-s-4-3n_{4}(F).
		\end{array}
		\right.\end{equation}
	Hence,
	\begin{align}\label{e63}0\le n_{3}(F)+n_{2}(F)=3-s.
	\end{align}
	Thus, $s\in \{1,2,3\}$. The three scenarios are discussed separately.
	
	\smallskip\noindent{\textit{Case 1: $s=1$.}}  Remark~\ref{r3.2} implies $m_G(1)\le (n+2q-2)/4$, while by~\eqref{e63}, we also have $n_{3}(F)+n_{2}(F)=2$.
	
	\smallskip\noindent{\textit{Subcase 1.1: $n_{3}(F)=2$ and $n_{2}(F)=0$.}} Here, the system \eqref{e62} leads to $n_{4}(F)=(2n-2q-7)/3$ and $n_{1}(F)=(n+2q-2)/3$. Note that $d_{1}(F)=n-2$. If $n\ge 25$,  Lemma~\ref{eigenvalue1-lem} leads to the impossible scenario: $m_F(1)\ge n_{1}(F)-2>(n+2q-2)/4\ge m_G(1)$.
	
	\smallskip\noindent{\textit{Subcase 1.2: $n_{3}(F)=n_{2}(F)=1$.}} By \eqref{e62}, we have  $n_{4}(F)=(2n-2q-6)/3$ and $n_{1}(F)=(n+2q-3)/3$. Moreover, Lemma~\ref{eigenvalue1-lem} leads to $m_F(1)\ge n_{1}(F)-2>(n+2q-2)/4\ge m_G(1)$, whenever $n\ge 29$.
	
	\smallskip\noindent{\textit{Subcase 1.3: $n_{3}(F)=0$ and $n_{2}(F)=2$.}} From  $n_{1}(F)=(n+2q-4)/3$, we arrive at $m_F(1)\ge n_{1}(F)-2>(n+2q-2)/4\ge m_G(1)$, for  $n\ge 33$.
	
	\smallskip\noindent{\textit{Case 2: $s=2$.}} Remark~\ref{r3.2} gives $m_G(1)\le (n+2q+1)/4$, while~\eqref{e63} leads to $n_{3}(F)+n_{2}(F)=1$. The two subcases, $(n_{3}(F), n_{2}(F))=(1,0)$ and $(n_{3}(F), n_{2}(F))=(0,1)$, are resolved as in the previous part of the proof, i.e., on the basis of Lemma~\ref{eigenvalue1-lem} and \eqref{e62} where we arrive at a contradiction by considering $n \geq 22$ in the former case and $n \geq 18$ in the latter.

	\smallskip\noindent{\textit{Case 3: $s=3$.}}  $m_G(1)\le (n+2q+4)/4$ is deduced from Remark \ref{r3.2}, and~\eqref{e63} implies  $n_{2}(F)=n_{3}(F)=0$.  Lemma~\ref{eigenvalue1-lem} eliminates the possibility $n\ge 19$.

In summary, $d_1(G)=d_{1}(F)$ holds when $n\ge 33$.
\end{proof}

\begin{lemma}\label{l6.2} If $n\ge 33$, then $F$ and $G$ share the same vertex degrees, along with $\varsigma_G(C_3)=\varsigma_F(C_3)$.
\end{lemma}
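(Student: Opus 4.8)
The plan is to build on Lemma~\ref{l6.1}, which already establishes $d_1(F)=d_1(G)=n-1$, and then argue as in Lemma~\ref{l5.1}. First I would fix $v_1$ as the vertex of degree $n-1$ in $F$ and substitute $d_1(F)=n-1$ together with the known degree data of $G$ (namely $d_1(G)=n-1$, $n_4(G)=0$, $n_2(G)=2q$, $n_1(G)=s$, and $n_3(G)=\sum_i k_i - 1 \cdots$) into the three moment identities from Lemma~\ref{TG-lem}, exactly as was done to derive the analogue of system~\eqref{e4.1}. Since both graphs share $n$, $m$, and $d_1$, matching $T_2$, $T_3$ and $T_4$ forces relations among $n_1(F), n_2(F), n_3(F), n_4(F)$, and these should collapse (via Lemma~\ref{n4-lem} or Lemma~\ref{leq5-lem}, giving $n_4(F)\le 1$) to the statement that either $n_4(F)=0$, in which case the degree sequences coincide and $\varsigma_F(C_3)=\varsigma_G(C_3)$ follows immediately from the $T_3$ identity, or $n_4(F)=1$, which must be ruled out.

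The main work, therefore, is excluding $n_4(F)=1$. Here I would mimic the triangle-counting contradiction of Lemma~\ref{l5.1}: assuming the unique $4$-vertex $w$ exists, the $T_3$ identity yields a relation of the form $\varsigma_G(C_3)=\varsigma_F(C_3)+1$. To evaluate $\varsigma_F(C_3)$ structurally, I would invoke Lemma~\ref{leq5-lem-2} (using $\chi_2(F)=5$, so the weaker $\chi_2\le 5$ hypothesis applies) to constrain the components of $F-v_1$: every component must be $Z_r$ $(4\le r\le 5)$, a path $P_l$, or a cycle $C_p$, since $F$ cannot contain $K_1\vee(Z_6\cup 15K_1)$ as a subgraph. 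The presence of exactly one $4$-vertex pins down exactly one $Z_r$ component; the counts $n_2(F)$ and $n_1(F)$ then determine the number of path and isolated-vertex components as in items (a)--(c) of Lemma~\ref{l5.1}. Counting triangles through $v_1$ gives $\varsigma_F(C_3)=n-s-q-1$, which I would then compare against the explicit value $\varsigma_G(C_3)=\sum_{i=1}^t k_i - ? \cdots$ computed from $G$'s cycle structure (all $k_i$ odd and $\ge 4$, so no triangles among the cycles).

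The subtlety here is that, unlike Theorem~\ref{11t} where $F-v_1$ had at most one cycle, in the $t\ge 2$ setting $G$ itself contains several cycles, so the formula for $\varsigma_G(C_3)$ must be derived carefully: since each $k_i$ is odd with $k_i\ge 4$ (hence $k_i\ge 5$), \emph{none} of $G$'s cycles is a triangle, and every triangle of $G$ uses $v_1$ and one edge of a $K_2$ or a cycle, giving $\varsigma_G(C_3)=\sum_i k_i + q = (n-1-s-2q)+q = n-s-q-1$. Matching this against the structural count $\varsigma_F(C_3)=n-s-q-1$ shows $\varsigma_G(C_3)=\varsigma_F(C_3)$, which \emph{contradicts} $\varsigma_G(C_3)=\varsigma_F(C_3)+1$; this is the desired contradiction ruling out $n_4(F)=1$. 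I expect the hardest part to be the bookkeeping in the structural count of triangles in $F-v_1$ when a $Z_r$ component is present, since each edge of $Z_r$ together with $v_1$ contributes a triangle and the contribution of the degree-$2$ vertices inside $Z_r$ must be counted exactly; getting the off-by-one right is precisely what produces the contradiction, so this step demands the most care. Once $n_4(F)=1$ is eliminated, the remaining case $n_4(F)=0$ gives the conclusion directly.
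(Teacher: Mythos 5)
Your overall strategy (reduce the lemma to a statement about $n_4(F)$ via the moment identities, then kill the nonzero case by counting triangles through $v_1$) is the same as the paper's, and your triangle bookkeeping in the case $n_4(F)=1$ reproduces the paper's count. But there is a genuine gap at the reduction step: you claim $n_4(F)\le 1$ ``via Lemma~\ref{n4-lem} or Lemma~\ref{leq5-lem}'', and neither lemma delivers this here. Lemma~\ref{n4-lem} requires $\chi_2<5$ strictly; in the present setting $t\ge 2$, so by Lemma~\ref{SQG-lem} and Remark~\ref{r3.2} we have $\chi_2(F)=5$ exactly, and that lemma is inapplicable --- this is precisely the point where Section~\ref{sec6} diverges from Section~\ref{sec5}. (You noticed this very issue when invoking Lemma~\ref{leq5-lem-2}, but overlooked it here.) Lemma~\ref{leq5-lem}, which does apply under $\chi_2\le 5$, gives no bound on $n_4$ at all: it only says that $4$-vertices are pairwise non-adjacent, that each is adjacent to at most one $3$-vertex, and that any two of them have only $v_1$ as a common neighbour. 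Consequently the case $n_4(F)\ge 2$ is never treated in your argument, and the proof is incomplete.

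The paper closes exactly this hole by taking $n_4(F)=b\ge 1$ arbitrary. The structural constraints of Lemma~\ref{leq5-lem}, combined with Lemmas~\ref{DeleteEdge-lem} and~\ref{leq5-lem-2}, force $F-v_1$ to consist of exactly $b$ components isomorphic to $Z_4$ or $Z_5$ (say $b_1$ copies of $Z_4$ and $b-b_1$ copies of $Z_5$), exactly $q$ paths of length at least two, $s-b$ isolated vertices, and cycles on the remaining vertices. The $T_3$ identity then gives $\varsigma_G(C_3)=\varsigma_F(C_3)+b$, while the component count gives
$\varsigma_F(C_3)\ge (4b-b_1)+\sum_{i=1}^{q}(l_i-1)+\bigl(n-s-1-4b+b_1-\sum_{i=1}^{q}l_i\bigr)=n-s-q-1=\varsigma_G(C_3)$,
a contradiction for every $b\ge 1$. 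Your $b=1$ computation is the special case of this; to repair your proof you need only replace the illegitimate reduction by this uniform-in-$b$ count. Two smaller points: your structural count should be the inequality $\varsigma_F(C_3)\ge n-s-q-1$ rather than an equality, since cycle components of $F-v_1$ that are triangles contribute an extra internal triangle (the contradiction survives because the inequality points the right way); and $n_3(G)=\sum_i k_i=n-1-2q-s$, not $\sum_i k_i-1$.
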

\begin{proof} Lemma~\ref{l6.1} ensures $d_{1}(F)=n-1$ whenever $n\ge 33$. A corresponding vertex is denoted by $v_1$. Since $d_1(G)=d_{1}(F)$, $n_4(G)=0$, $n_3(G)=n-1-2q-s$, $n_2(G)=2q$ and  $n_1(G)=s$,  from~\eqref{e61}, we deduce
\begin{equation}\label{e64}\left\{
\begin{array}{ll}
                            n_{1}(F)=s-n_{4}(F)\\
			               n_{2}(F)=2q+3n_{4}(F),\\
                            n_{3}(F)=n-2q-s-1-3n_{4}(F).
 \end{array}
 \right.\end{equation}
To achieve our goal, we need  to verify that $n_{4}(F)=0$. Suppose that $n_{4}(F)=b\ge 1$, and let $w_1,w_2,\ldots,w_b$ be the $4$-vertices in $F$. From~\eqref{e64} and Lemma \ref{TG-lem} ($T_3(G)=T_3(F)$), we arrive at
\begin{equation}\label{e65}\left\{
\begin{array}{ll}
                            n_{1}(F)=s-b\\
			               n_{2}(F)=2q+3b,\\
                            n_{3}(F)=n-2q-s-1-3b,\\
                            \varsigma_G(C_3)=\varsigma_F(C_3)+b.
 \end{array}
 \right.\end{equation}
By Lemma \ref{leq5-lem},  every $w_i$ is adjacent to at most one 3-vertex, there is no edge between $w_i$ and $w_j$ ($i\neq j$), and  $N_F(w_i)\cap N_F(w_j)=\{v_1\}$. Combining with Lemmas~\ref{DeleteEdge-lem} and \ref{leq5-lem-2}, these structural observations lead to the conclusion that every component of  $F-v_1$ is isomorphic to either $Z_p~(4\le p\le 5)$, or $C_k~(k\ge 3)$, or $P_l~(l\ge 1)$. Moreover, we deduce the following:

\begin{itemize}
	\item[(a)] There are exactly $b$ copies of $Z_p$ in $F-v_1$, as $n_{4}(F)=b$. We suppose that there are $b_1$ copies of $Z_4$ and $b-b_1$ copies of $Z_5$, which give rise to $4b-b_1$ triangles in $F$.
	
	\item[(b)] Note that $n_{2}(F)=2q+3b$, and from (a) we know that there are  $b$ copies of $Z_p~(4\le p\le 5)$ in $F-v_1$, which gives $3b$ $2$-vertices in $F$. Thus, there are exactly $q$ paths of length at least two in $F-v_1$, say $P_{l_1}, P_{l_2},\ldots,P_{l_q}$ $(l_i\ge 2)$.  They give rise to $\sum_{i=1}^{q}(l_i-1)$ triangles in $F$.

\item[(c)]Further, $n_{1}(F)=s-b$ implies the existence of $s-b$ copies of $P_1$ in $F-v_1$. Of course, they do not contribute any triangle in $F$.
	
	\item[(d)] Finally, the remaining components of $F-v_1$ are cycles which produce at least $$n-2q-s-1-3b-(b-b_1)-\sum_{i=1}^{q}(l_i-2)=n-s-1-4b+b_1-\sum_{i=1}^{q}l_i$$ triangles in $F$.
\end{itemize}

From (a), (b), (c) and (d), we immediately obtain $$\varsigma_F(C_3)\ge (4b-b_1)+ \sum_{i=1}^{q}(l_i-1)+(n-s-1-4b+b_1-\sum_{i=1}^{q}l_i)=n-s-q-1.$$  On the other hand, we know that $$\varsigma_G(C_3)=\sum_{i=1}^{t}k_i+q=n-s-q-1.$$ Therefore, $\varsigma_G(C_3)\not=\varsigma_F(C_3)+b$, which contradicts \eqref{e65}. So, $G$ and $F$ share the same degrees. By Lemma~\ref{TG-lem} ($T_3(G)=T_3(F)$), we deduce $\varsigma_G(C_3)=\varsigma_F(C_3)$.
\end{proof}

We now eliminate all but one structural possibility for $F$.

\begin{lemma}\label{t6.3} For $n \geq 33$, the only structural possibility for $F$ is:
	$$
	K_1 \vee (C_{k'_1} \cup C_{k'_2} \cup \cdots \cup C_{k'_z} \cup P_{l_1} \cup P_{l_2} \cup \cdots \cup P_{l_q}\cup sK_1).
	$$
	where $z\ge 1$, $k'_i \geq 3$, $l_1 \geq l_2 \geq \cdots \geq l_r \geq 3 > l_{r+1} = \cdots = l_q = 2$ $(0 \leq r \leq q)$.
\end{lemma}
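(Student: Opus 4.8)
The plan is to mimic the structure of the proof of Lemma~\ref{t5.1} (the corresponding statement in Section~\ref{sec5}), but now exploiting the degree information already secured by Lemmas~\ref{l6.1} and~\ref{l6.2}. By Lemma~\ref{l6.1} we have $d_1(F)=n-1$, so fix a vertex $v_1$ of $F$ with $d_F(v_1)=n-1$; it is adjacent to every other vertex, hence $F\cong K_1\vee (F-v_1)$. By Lemma~\ref{l6.2}, $F$ and $G$ share the same degree sequence, so in $F-v_1$ every vertex has degree $0$, $1$, or $2$ (since $n_4(F)=0$, which is the content of that lemma). Consequently every component of $F-v_1$ is a path (including isolated vertices $P_1$ and single edges $P_2$) or a cycle. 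The whole task is therefore to pin down how many cycles can occur and to rule out degenerate possibilities.

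First I would count components by degree type. The equalities $n_1(F)=s$, $n_2(F)=2q$, and $n_3(F)=n-1-2q-s$ from Lemma~\ref{l6.2} translate directly into statements about $F-v_1$: the $s$ vertices of degree $1$ in $F$ are exactly the isolated vertices of $F-v_1$, giving $sK_1$; the $n-1-2q-s$ vertices of degree $3$ in $F$ are exactly the vertices lying on cycles of $F-v_1$; and the $2q$ vertices of degree $2$ in $F$ are the endpoints of the non-trivial paths. Since each path $P_l$ with $l\ge 2$ contributes exactly two degree-$2$ vertices (its endpoints) and its internal vertices all have degree $3$ in $F$, there are exactly $q$ such paths $P_{l_1},\dots,P_{l_q}$ with $l_i\ge 2$. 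I would order them as $l_1\ge \cdots \ge l_r\ge 3> l_{r+1}=\cdots=l_q=2$. The remaining degree-$3$ vertices form the cycle components $C_{k'_1},\dots,C_{k'_z}$ with each $k'_i\ge 3$. This establishes the stated form with the only open point being $z\ge 1$ versus $z=0$.

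The key remaining step is to show $z\ge 1$, i.e.\ that at least one cycle must be present. This is exactly where the spectral hypothesis $\chi_2(F)=5$ (not $<5$) becomes essential and where Section~\ref{sec6} differs qualitatively from Section~\ref{sec5}. In the $t\ge 2$ setting, $G$ has $\chi_2(G)=5$ by Remark~\ref{r3.2}, and since $F$ is $Q$-cospectral with $G$ we have $\chi_2(F)=5$ as well. If $z=0$, then $F\cong K_1\vee (P_{l_1}\cup\cdots\cup P_{l_q}\cup sK_1)=F^*$, a cone over paths and isolated vertices only, and Lemma~\ref{T4F*-lem} gives $T_4(F^*)<T_4(G)$ strictly, contradicting $Q$-cospectrality. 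Alternatively, $F^*$ has no eigenvalue equal to $5$ apart from what the quotient forces, so the second-largest eigenvalue would fall below $5$; either argument forecloses $z=0$. I expect the cleanest route is the $T_4$ comparison via Lemma~\ref{T4F*-lem}, since it directly yields a numerical contradiction.

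The main obstacle I anticipate is not the counting, which is routine bookkeeping, but justifying that $F-v_1$ contains no more exotic components once degrees are fixed. In Lemma~\ref{t5.1} the constraint $\chi_2<5$ forbade two cycles via Lemma~\ref{leq5-lem-2}, but here $\chi_2=5$ permits multiple cycles, so that exclusion mechanism is unavailable and indeed must not be invoked. The subtlety is that with only the degree-sequence and connectivity data, a priori one might worry about components that are neither paths nor cycles; but the maximum degree in $F-v_1$ being $2$ forces each component to be a path or cycle by elementary graph theory, so no such worry survives. The genuinely delicate point is therefore confined to the $z\ge 1$ argument, and I would present it as the culminating step, leaning on Lemma~\ref{T4F*-lem} to rule out the paths-only configuration and thereby complete the classification.
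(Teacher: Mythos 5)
Your proposal is correct and follows essentially the same route as the paper: fix the dominating vertex $v_1$ guaranteed by Lemma~\ref{l6.1}, use the degree-sequence equality of Lemma~\ref{l6.2} (in particular $n_4(F)=0$) to force every component of $F-v_1$ to be a path or a cycle, count the $q$ nontrivial paths from $n_2(F)=2q$, and eliminate the cycle-free configuration by the $T_4$ comparison of Lemma~\ref{T4F*-lem}. The only caveat is that your speculative alternative for excluding $z=0$ (that $\chi_2$ of a cone over paths alone would drop below $5$) is unproven in the paper and should not be relied upon, but since you correctly defer to the Lemma~\ref{T4F*-lem} argument, the proof stands as is.
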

\begin{proof} Lemma~\ref{l6.1} ensures $d_{1}(F)=n-1$ when $n\ge 33$; with~$v_1$ in the role of a corresponding vertex. Besides, the co-degree condition (Lemma~\ref{l6.2}) leads to the conclusion that every component of  $F-v_1$ is isomorphic to either $C_k~(k\ge 3)$ or $P_l~(l\ge 1)$.  Moreover, $n_{2}(F)=2q$ implies that there are exactly $q$ disjoint paths of length at least 2 in  $F-v_1$. Therefore,
	$$
	F\cong K_1 \vee (C_{k'_1} \cup C_{k'_2} \cup \cdots \cup C_{k'_z} \cup P_{l_1} \cup P_{l_2} \cup \cdots \cup P_{l_q}\cup sK_1) \quad \text{or} \quad F\cong K_1 \vee (P_{l_1} \cup P_{l_2} \cup \cdots \cup P_{l_q}\cup sK_1),
	$$
where $z\ge 1, k'_i\ge 3, l_i\ge 2$. However, the latter possibility is eliminated by Lemma~\ref{T4F*-lem}.
\end{proof}

As in Section \ref{sec4}, we will write the multiset $\{k'_j~:~ k'_j=c, 1\le j \le z\}$ as $\{k'_j~:~ k'_j=c\}$.

\medskip\noindent{\em \textbf{Proof of Theorem \ref{12t}.}}
We assume that $n\geq 33$ and each $k_i$ $(k_i\ge 4)$ is odd (as in the statement of this theorem). Let $F$ be as in the beginning of this section, meaning that either $F\cong G$ or $F$ is as in the formulation of Lemma~\ref{t6.3}. In what follows we eliminate the latter scenario by the way of contradiction.

\smallskip\noindent{\textit{Case 1: $r=0$.}}
 Then $$F \cong K_1\vee (C_{k'_1} \cup C_{k'_2} \cup \cdots \cup C_{k'_z}\cup qK_2\cup sK_1).$$ Note that $z=t$ because $m_F(5)=z-1=t-1=m_G(5)$ (see Lemma~\ref{SQG-lem}). Moreover, we may suppose $k_{1}\ge k_{2} \ge\cdots \ge k_t\ge 5$, $k'_{1}\ge k'_{2} \ge\cdots \ge k'_t\ge 3$. Observing that $3+2\cos\frac{2\pi}{k_j}$ is increasing with $k_j$, we deduce that $k_1=k'_1$. By excluding the common $Q$-eigenvalues $3+2\cos\frac{2i\pi}{k_1},$ $1\leq i\leq k_1-1$, from the $Q$-spectrum  stated in Lemma~\ref{SQG-lem}, we arrive at $k_2=k'_2$. By repeating this procedure, we deduce $k_j=k'_j$, for all $j$, which yields $F\cong G$.

\smallskip\noindent{\textit{Case 2: $r\ge 1$.}}
 By Lemma \ref{l6.2}, we find $k'_j\ne 3$, for $1\le j\le z$ because $k_i\ge 5$ and $\varsigma_G(C_3)=\varsigma_F(C_3)$. Then, by Lemma \ref{T4F**-lem},  $$T_4(F)-T_4(G)=8|\{k'_j~:~ k'_j=4\}|-4r.$$
 From $T_4(F)-T_4(G)=0$ and $r\ne 0$, we deduce $$|\{k'_j~:~ k'_j=4\}|\ge 1 \quad \text{and} \quad r=2|\{k'_j~:~ k'_j=4\}|\ge 2.$$
 Now, by Lemma~\ref{mul-0-lem}, we have $m_{F-v_1}(0)\ge q+s+1$, and so by Lemma~\ref{DeleteVertex-lem},  $$m_{F}((0,1])\ge q+s+1.$$
On the other hand, since each $k_i$ is odd, combining with Lemma \ref{SQG-lem} and Remark \ref{r3.2}, we obtain $$m_{G}((0,1])=q+s<m_{F}((0,1]).$$ Therefore, $G$ and $F$ are not $Q$-cospectral, contrary to our assumption.
\qed

\section*{Acknowledgements}
This  work was supported by the National Natural Science Foundation of China (No.~12361070) and  the Ministry
	of Science, Technological Development, and Innovation of the Republic of Serbia (No.~451-03-136/2025-03/200104).

\end{document}